\title{Positive Modal Logic Beyond Distributivity}
\author{Nick Bezhanishvili\inst{1}$^{\text{,\Letter}}$, Anna Dmitrieva\inst{2}, Jim de Groot\inst{3}, Tommaso Moraschini\inst{4}}
\institute{
University of Amsterdam, \texttt{n.bezhanishvili@uva.nl}
\and
University of East Anglia, \texttt{a.dmitrieva@uea.ac.uk}
\and
The Australian National University, Canberra, Ngunnawal Country, Australia \texttt{jim.degroot@anu.edu.au}
\and
Departament de Filosofia, Facultat de Filosofia, Universitat de Barcelona (UB), Carrer Montalegre, 6, 08001 Barcelona, Spain, \texttt{tommaso.moraschini@ub.edu}}
\date{}
\authorrunning{}
\titlerunning{}
  \theoremstyle{definition}
    \newtheorem{para}{}[section]
    \newtheorem{definition}[para]{Definition}
  \theoremstyle{theorem}
    \newtheorem{lemma}[para]{Lemma}
    \newtheorem{corollary}[para]{Corollary}
    \newtheorem{theorem}[para]{Theorem}
    \newtheorem{proposition}[para]{Proposition}
\newenvironment{example}
  {\pushQED{\qed}\examplex}
  {\popQED\endexamplex}
\newenvironment{remark}
  {\pushQED{\qed}\remarkx}
  {\popQED\endremarkx}
  \tikzset{decoration={snake, amplitude=.2mm,segment length=2mm}}
\newcommand{\ms}[1]{\mathscr{#1}}
\newcommand{\topo}[1]{\mathbb{#1}}  
\newcommand{\cat}[1]{\mathsf{#1}}   
\newcommand{\fun}[1]{\mathcal{#1}}  
\newcommand{\mo}[1]{\mathfrak{#1}}  
\newcommand{\amo}[1]{\mathscr{#1}}  
\newcommand{\lan}[1]{\mathbf{#1}}   
\renewcommand{\log}[1]{\boldsymbol{\mathcal{#1}}}   
\renewcommand{\iff}{\quad\text{iff}\quad}
\renewcommand{\phi}{\varphi}
\mathchardef\hyphen="2D
\DeclareMathOperator{\op}{op}
\DeclareMathOperator{\Prop}{Prop}
\DeclareMathOperator{\st}{st}
\DeclareMathOperator{\so}{so}
\renewcommand{\th}{\operatorname{th}}
\newcommand{\cp}{\trianglelefteqslant}
\newcommand{\isfil}{\mathsf{isfil}}
\newcommand{\abovemeet}{\mathsf{abovemeet}}
\newcommand{\ISFIL}{\operatorname{ISFIL}}
\newcommand{\AT}{\operatorname{AT}}
\newcommand{\REL}{\operatorname{REL}}
\newcommand{\BOXAT}{\operatorname{BOX\hyphen AT}}
\newcommand{\fil}{\fun{F}}
\newcommand{\bfil}{\fun{F}_b}
\newcommand{\cfil}{\fun{F}_{clp}}
\newcommand{\tfil}{\fun{F}_{top}}
\newcommand{\kfil}{\fun{F}_k}
\newcommand{\pfil}{\fun{F}_p}
\newcommand{\sfil}{\fun{F}_{sat}}
\newcommand{\minf}{\langle\mathsf{inf}\rangle}
\newcommand{\msup}{\langle\mathsf{sup}\rangle}
\newcommand{\tVdash}{\Vdash_{\!t}}
\newcommand{\FOL}{\lan{FOL}}
\newcommand{\SOL}{\lan{SOL}}
\newcommand{\llb}{\llbracket}
\newcommand{\rrb}{\rrbracket}
\newcommand{\llp}{\llparenthesis\kern.2ex}
\newcommand{\rrp}{\kern.2ex\rrparenthesis}
\newcommand{\cdash}[1][]{\mathrel{\kern.1ex\text{%
  \tikz[baseline=-.81ex, line width=.1ex, line cap=round, scale=1.1]
    {\draw (0ex,-.75ex) -- (0ex,.75ex);
     \draw (0ex,0ex) -- (1ex,0ex);
     \draw (-.4ex,0ex) arc(180:235:.91ex);
     \draw (-.4ex,0ex) arc(180:125:.91ex);}$_{#1}$}}\kern.1ex}
\newcommand{\clwidth}{.12ex}
\newcommand{\goodbox}{\hspace{.2ex}\text{%
  \tikz[baseline=-.6ex, rounded corners=.01ex, line width=\clwidth]
    {\draw (-.6ex,-.6ex) rectangle (.6ex,.6ex);}}\kern.2ex}
\newcommand{\gooddiamond}{\hspace{.2ex}\text{%
  \tikz[baseline=-.6ex, rounded corners=.01ex, rotate=45, line width=\clwidth]
    {\draw (-.5ex,-.5ex) rectangle (.5ex,.5ex);}}\kern.2ex}
\renewcommand{\Box}{\goodbox}
\renewcommand{\Diamond}{\gooddiamond}
\newcommand{\bbox}{\hspace{.2ex}\text{%
  \tikz[baseline=-.6ex, rounded corners=.01ex, line width=\clwidth]
    {\draw[fill=black] (-.6ex,-.6ex) rectangle (.6ex,.6ex);}}\kern.2ex}
\newcommand{\bdiamond}{\hspace{.2ex}\text{%
  \tikz[baseline=-.6ex, rounded corners=.01ex, rotate=45, line width=\clwidth]
    {\draw[fill=black] (-.5ex,-.5ex) rectangle (.5ex,.5ex);}}\kern.2ex}
\newcommand{\dbox}{\hspace{.2ex}\text{%
  \tikz[baseline=-.6ex, rounded corners=.01ex, line width=\clwidth]
    {\draw (-.6ex,-.6ex) rectangle (.6ex,.6ex);
     \draw[fill=black] (0,0) circle(.12ex);}}\kern.2ex}
\newcommand{\ddiamond}{\hspace{.2ex}\text{%
  \tikz[baseline=-.6ex, rounded corners=.01ex, rotate=45, line width=\clwidth]
    {\draw (-.5ex,-.5ex) rectangle (.5ex,.5ex);
     \draw[fill=black] (0,0) circle(.12ex);}}\kern.2ex}
\newcommand{\lbox}{\hspace{.2ex}\text{%
  \tikz[baseline=-.6ex, rounded corners=.01ex, line width=\clwidth]
    {\draw (-.6ex,-.6ex) rectangle (.6ex,.6ex);
     \draw (0,-.6ex) -- (0,.6ex);}}\kern.2ex}
\newcommand{\ldiamond}{\hspace{.2ex}\text{%
  \tikz[baseline=-.6ex, rounded corners=.01ex, rotate=45, line width=\clwidth]
    {\draw (-.5ex,-.5ex) rectangle (.5ex,.5ex);
     \draw (-.5ex,-.5ex) -- (.5ex,.5ex);}}\kern.2ex}
\newcommand{\hlbox}{\hspace{.2ex}\text{%
  \tikz[baseline=-.6ex, rounded corners=.01ex, line width=\clwidth]
    {\draw (-.6ex,-.6ex) rectangle (.6ex,.6ex);
     \draw (-.6ex,0) -- (.6ex,0);}}\kern.2ex}
\newcommand{\hldiamond}{\hspace{.2ex}\text{%
  \tikz[baseline=-.6ex, rounded corners=.01ex, rotate=45, line width=\clwidth]
    {\draw (-.5ex,-.5ex) rectangle (.5ex,.5ex);
     \draw (.5ex,-.5ex) -- (-.5ex,.5ex);}}\kern.2ex}
\newcommand{\customtri}{\hspace{.2ex}\text{%
  \tikz[baseline=-.6ex, rounded corners=.01ex, line width=\clwidth]
    {\draw (0,-.6ex) -- (-.65ex,-.6ex) -- (0,.65ex) -- (.65ex,-.6ex) -- (0,-.6ex);}}\kern.2ex}
\newcommand{\lmon}{\hspace{.2ex}\text{%
  \tikz[baseline=-.6ex, rounded corners=.01ex, line width=\clwidth]
    {\draw (0,-.6ex) -- (-.65ex,-.6ex) -- (0,.65ex) -- (.65ex,-.6ex) -- (0,-.6ex);
     \draw (0,0) -- (0,.65ex);}}\kern.2ex}
\newcommand{\Star}{\scalebox{.75}{\text{\faStarO}}}
\newcommand{\uparr}{{\uparrow}}
\newcommand{\MSL}{\cat{MSL}}
\newcommand{\HMS}{\cat{HMS}}
\newcommand{\Lat}{\cat{Lat}}
\newcommand{\LSpace}{\cat{LSpace}}
\newcommand{\LFrm}{\cat{LFrm}}
\newcommand{\DL}{\cat{DL}}
\newcommand{\PUP}{\cat{PUP}}
\DeclareMathOperator{\FiltComp}{\fun{fe}}
\DeclareMathOperator{\IdComp}{\fun{ie}}
\DeclareMathOperator{\PiComp}{\Pi_1}
\DeclareMathOperator{\Hom}{Hom}
\newcommand{\rbox}[1]{[#1]}
\newcommand{\rdiamond}[1]{\langle #1 \rangle}
\newcommand{\genFil}{\mathbin{\triangledown}}
\newcommand{\bigGenFil}{\operatorname{\bigtriangledown}}
\newcommand{\ftop}{1}
\newcommand{\fleq}{\preccurlyeq}
\newcommand{\fmeet}{\curlywedge}
\newcommand{\bigfmeet}{\bigcurlywedge}
\newcommand{\fjoin}{\curlyvee}
\newcommand{\negphantom}[1]{\ifmmode\settowidth{\dimen0}{$#1$}\else\settowidth{\dimen0}{#1}\fi\hspace*{-\dimen0}}
\begin{document}

\maketitle

\begin{abstract}
  \noindent
  We develop a duality for (modal) lattices that need not be distributive,
  and use it to study positive (modal) logic beyond distributivity,
  which we call weak positive (modal) logic.
  This duality builds on the Hofmann, Mislove and Stralka duality for
  meet-semilattices. 
  We introduce the notion of $\Pi_1$-persistence and show that every
  weak positive modal logic is $\Pi_1$-persistent.
  This approach leads to a new relational semantics for weak positive modal logic,
  for which we 
  prove an analogue of Sahlqvist correspondence result.%
  \footnote{This paper is partially based on the Master's thesis
  \cite{Dmi21}.}
\end{abstract}

\medskip\noindent
\textit{Keywords:}
duality, non-distributive positive logic, weak positive logic, modal logic, Sahlqvist correspondence.

\medskip\noindent
\textit{AMS Subject Classification:} 03B45, 03G10, 06B15, 06D50.


\section{Introduction}

Dualities between modal algebras and modal spaces on the one hand and Heyting algebras  and Esakia spaces on the other have been central to the study of modal and intermediate logics~\cite{BRV01, CZ97}. Indeed,  many important results such as Sahlqvist canonicity and correspondence~\cite{Sah75} can be understood through the lenses of duality techniques~\cite{SamVac89}. The  duality between modal algebras and modal spaces has been extended to a duality between modal distributive lattices
and modal Priestley spaces in \cite{Gol89,CelJan99}.
 This led to a Sahlqvist theory for the positive distributive modal logic introduced in   \cite{Dun95, GNV05, CelJan99}. 

When the algebraic side of a duality consists of distributive lattice expansions, in the spatial side of the duality one often  works with the 
Priestley space \cite{Pri70, DP02} of all the prime filters of a given lattice. This is no longer the case when the base lattice is non-distributive. 
There are many extensions of dualities for Boolean algebras and distributive lattices to the setting of all lattices, e.g.~by Urquhart \cite{Urq78}, Hartonas  \cite{Hartonas19, HO19}, Gehrke and Van Gool~\cite{GvG14}, 
 Goldblatt~\cite{Gol20}, and Hartung~\cite{Har92}.
Each of these uses either a ternary relation, or two-sorted frames.
While these approaches  have proven fruitful and interesting, they are quite different from known dualities for propositional logics such as  
Stone and Priestley dualities. This makes it difficult to adjust existing tools and techniques
from distributive logics to non-distributive ones. 

Hofmann, Mislove and Stralka (HMS)~\cite{HofMisStr74} developed a duality for 
(not necessarily distributive) meet-semilattices along the same lines of the  van Kampen-Pontryagin duality for locally compact abelian groups given in \cite{Roe74}. This was later restricted  to a duality for lattices by Jipsen and Moshier \cite{MosJip09}.
In this approach, the dual space of a lattice  is based not on the prime filters, but on all the (proper) filters. This 
is closely related to  Holliday's  possibility semantics of modal logic \cite{Hol21} (see also \cite{HM22}) and to the choice-free duality for Boolean algebras in \cite{BH20}, which are also based on  spaces of all proper filters. A similar   approach was also developed for ortholattices by Goldblatt \cite{Gol75} and extended later by Bimbó \cite{Bimbo07}. 

The aim of this paper is to investigate positive modal logic that
is not necessarily distributive. We refer to this as
\emph{weak positive modal logic}.
It is a logic with the same language as positive modal logic,
i.e.~the negation- and implication-free fragment of classical modal logic,
which does not necessarily satisfy the distributivity axiom.

We study these logics via a duality that builds on HMS duality.
%
 We recall that a Priestley space is a partially ordered compact space satisfying the Priestley separation axiom 
\begin{center}
$x\nleq y$ implies that there is a clopen upset $U$ such that $x\in U$ and $y\notin U$. 
\end{center}
These spaces provide  a  duality for bounded   distributive lattices,  which associates every Priestley space with the lattice of its clopen upsets. 
In the HMS duality,  one works with similar structures,  but the role of partially ordered compact spaces is played by
meet-semilattices with a compact topology and that of clopen upsets by clopen filters. \color{black} Then the HMS analogue of the Priestley separation axiom is 
\begin{center}
$x\nleq y$ implies that there is a clopen filter $U$ such that $x\in U$ and $y\notin U$. 
\end{center}
These spaces provide  a  duality for bounded meet-semilatices. 

  Our approach is analogous to the one of  Esakia duality for Heyting algebras.
  Recall that an Esakia space is a Priestley space where for every pair of
  clopen upsets $U$ and $V$ the Heyting implication $U \to V$ is also a clopen
  upset (this is sometimes formulated as the equivalent condition that 
  ${\downarrow}U$ is clopen for every clopen $U$)~\cite{Esa74, Esakia-book85}.
  In analogy with this, an HMS space is said to be a \emph{lattice space} if
  the join in the lattice of filters of every pair $U$ and $V$ of clopen filters
  (i.e.~$\{x \mid x\geq a\wedge b$ for some $a\in U$ and $b\in V\}$) is also a
  clopen filter. 

We extend this duality to modal  lattices in the signature with two unary modalities, $\Box$ and $\Diamond$. More precisely, by a modal lattice we understand a  lattice with a top element $\top$ and two modalities related via Dunn's axiom $\Diamond x \wedge \Box y
      \leq \Diamond(x \wedge y)$  \cite{Dun95} and satisfying the equations $\Box \top \thickapprox \Diamond \top \thickapprox \top$.  Furthermore, while $\Box$ will be assumed to distribute over finite meets, we require $\Diamond$ to be merely monotone. A similar phenomenon in the context of modal intuitionistic logic has been investigated in \cite{Koj12}. 
Despite the asymmetry between $\Box$ and $\Diamond$,   on the dual side these modalities
are interpreted via a binary relation in the standard way. 


  This duality allows us to define a new relational 
  semantics for weak positive modal logics in which the analogue of a Kripke frame is
  a meet-semilattice with an extra relation.
  The meet gives rise to a partial order, so these frames can be viewed as
  bi-relational frames where the relations satisfy certain conditions.
  Formulae are interpreted as filters, disjunction is interpreted as the
  least filter generated by the interpretation of each disjunct,
  and modalities are interpreted in the standard way.
  This new semantics can be seen as a generalisation of the team semantics
  of~\cite{Hodges97} and of the modal information semantics of~\cite{vB22,Knu22}.

Kripke semantics for intuitionistic and modal logics is tightly related to the theory of canonical extensions~\cite{JonTar51,GehHar01,GehJon94}. This is largely due to the fact that a formula is valid in the Kripke frame associated with a Heyting or modal algebra $A$ if and only if it is valid in the canonical extension of $A$. In our case, the role of canonical extensions is played by Gehrke and Priestley's $\Pi_1$-completions \cite{GehPri08}. This is because a formula is valid in the Kripke frame associated with a modal lattice $A$ by our duality iff it is valid in the $\Pi_1$-completion of $A$. Notably, the $\Pi_1$-completion of $A$ can be described concretely as the modal lattice of all filters of the lattice of filters of $A$ (or, equivalently, as the composition of the filter and the ideal completions of $A$).

 Our main results are  Sahlqvist-style preservation and correspondence results for weak positive modal logic with respect to this new semantics. 
 Using a duality technique  similar to that of  Sambin and Vaccaro \cite{SamVac89}, we show that every sequent is preserved by $\Pi_1$-completion. 
 Note that in the propositional setting this corresponds to the fact that every variety of lattices is closed under ideal completions and filter completions \cite{BirSack, BakHal74}.

We also prove an analogue of  the Sahlqvist correspondence result. In particular, we introduce Sahlqvist sequents in our language and show that very Sahlqvist sequent has a first-order correspondent. We also introduce the notion of $\Pi_1$-persistence for weak positive modal logics, which is a logical analogue for the corresponding class of algebra to be closed under $\Pi_1$-completions and show that every weak positive modal logic is $\Pi_1$-persistent. As a result every weak positive modal logic is complete with respect to our   relational semantics.  We point out that an alternative approach to Sahlqvist correspondence and canonicity for non-distributive logics has been undertaken in \cite{CP19}, although this perspective is based on canonical extensions and is, therefore, orthogonal to the one developed in this paper.


With this paper we hope to lay a groundwork for a theory of weak positive modal logics. As discussed in the conclusion, there are many interesting directions for future research.
These include the study of logics that lie between non-distributive and distributive positive (modal) logic, deriving more results for the weak modal logic presented in this paper,
as well as extending weak positive logic with different types of modalities.

\section{Preliminaries}\label{sec:lattices}

  We briefly recall a Stone-type duality for the category of meet-semilattices
  with top due to Hofmann, Mislove and Stralka~\cite{HofMisStr74}.
  We then restrict this to a duality for lattices,
  and show how it relates to various completions of lattices.

\subsection{Dual Adjunctions}

\begin{definition}
  By a \emph{semilattice} we mean a meet-semilattice with top.
  Every semilattice $(X, \top, \wedge)$ has an underlying partial
  order $\leq$ given by $x \leq y$ iff $x \wedge y = x$.
  A \emph{(semilattice) homomorphism} from $(X, \top, \wedge)$ to
  $(X', \top', \wedge')$ is a function $f : X \to X'$ such that
  $f(\top) = \top'$ and $f(x \wedge y) = f(x) \wedge' f(y)$ for all
  $x, y \in X$.
  We write $\MSL$ for the category of semilattices and homomorphisms.
  
  Similarly, by a \emph{lattice} we mean a bounded lattice,
  and lattice homomorphisms are assumed to preserve these bounds.
  We write $\Lat$ for the category of lattices and lattice homomorphisms.
\end{definition}

  If $(X, \leq)$ is a partial order (possibly coming from a semilattice
  $(X, \top, \wedge)$) and $a \subseteq X$ then we define the
  \emph{upward closure} of $a$ by\
  $\uparr a := \{ y \in X \mid x \leq y \text{ for some } x \in a \}$.
  The set $a$ is called \emph{upward closed} or an \emph{upset} if
  $\uparr a = a$.
  If $a = \{ x \}$ then we write $\uparr x$ instead of $\uparr \{ x \}$.
  The \emph{downward closure} and \emph{downsets} are defined similarly.
  
\begin{definition}
  A \emph{filter} $p$ of a semilattice $(X, \top, \wedge)$ is a nonempty
  upset $p \subseteq X$ that is closed under meets.
  It is called \emph{principal} if
  $p = \uparr x$ for some $x \in X$.
\end{definition}

  Filters of $(X, \top, \wedge)$ correspond bijectively
  to homomorphisms to the two-element semilattice $2 = \{ \top, * \}$:
  every filter $p$ yields a characteristic map
  $\chi_p : X \to 2$ given by $\chi_p(x) = \top$ iff $x \in p$,
  and conversely for every homomorphism $f : X \to 2$, $f^{-1}(\top)$
  is a filter.
  For every semilattice $(X, \top, \wedge)$,
  the collection $\fil(X, \top, \wedge)$ of filters forms a complete
  semilattice ordered by subset inclusion. It is then easy to see
  that the filter $X$ is the largest element in $\fil(X, \top, \wedge)$
  and that the greatest lower bound of a collection of filters is given
  by their intersection.
  Therefore it is also a (complete) lattice.
  The top and bottom element of $\fil(X, \top, \wedge)$
  are given by $X$ and $\{ \top \}$.
  Binary joins are given by
  $$
    p \genFil q = \uparr \{ x \wedge y \mid x \in p, y \in q \},
  $$
  and the join of any set $F \subseteq \fil(X, \top, \wedge)$
  can be given by
  $
    \bigGenFil F
      = \bigcup \{ p_1 \genFil \cdots \genFil p_n \mid p_1, \ldots, p_n \in F \}.
  $

  This assignment $\fil$ extends to a contravariant functor $\fil : \MSL \to \MSL$
  if we define
  its action on a homomorphism $f : (X, \top, \wedge) \to (X', \top', \wedge')$
  by $\fil f = f^{-1} : \fil(X', \top', \wedge') \to \fil(X, \top, \wedge)$.
  For a semilattice $(X, \top, \wedge)$, let
  $$
    \eta_{(X, \top, \wedge)} : (X, \top, \wedge) \to \fil\fil(X, \top, \wedge) : x \mapsto \{ p \in \fil(X, \top, \wedge) \mid x \in p \}.
  $$
  This yield a natural transformation
  $\eta : \fun{id}_{\MSL} \to \fil\fil$ that satisfies
  $\fil\eta \circ \eta_{\fil} = \fun{if}_{\fil}$.
  Therefore:

\begin{proposition}\label{prop:dual-adj-MSL}
  The functor $\fil$ then establishes a dual adjunction between
  $\MSL$ and $\MSL$ with both units given by $\eta$.
\end{proposition}

  In preparation for using semilattices as interpretation for weak positive logic (Section~\ref{sec:logic}),
  and for the duality for lattices that we derive in Theorem~\ref{thm:duality-lat-lspace},
  we restrict the functor $\fil : \MSL \to \MSL$ to
  functors $\Lat \to \LFrm$ and $\LFrm \to \Lat$.
%
  One occurrence of $\MSL$ is restricted to $\Lat$,
  while the other occurrence is restricted to the category of semilattices
  and so-called L-morphisms.
  The situation is analogous to that for intuitionistic logic, where
  the functors establishing the dual adjunction between distributive lattices
  and posets restrict to contravariant functors between the categories of
  Heyting algebras and intuitionistic Kripke frames
  (see Figure~\ref{fig:int}).
  \begin{figure}[h]
  $$
    \begin{tikzcd}[row sep=large,
                   column sep=large,
                   every matrix/.append style={draw, inner ysep=11pt, inner xsep=6pt, rounded corners}]
      \MSL
            \arrow[r, shift left=0pt, bend left=10, "\fil"]
        & \MSL
            \arrow[l, shift left=0pt, bend left=10, "\bfil"] \\ [-.3em]
      \LFrm
            \arrow[u, >->]
            \arrow[r, shift left=0pt, bend left=10, "\fil"]
        & \cat{Lat}
            \arrow[u, >->]
            \arrow[l, shift left=0pt, bend left=10, "\bfil"]
    \end{tikzcd}
    \qquad
    \begin{tikzcd}[row sep=large,
                   column sep=large,
                   every matrix/.append style={draw, inner ysep=11pt, inner xsep=6pt, rounded corners}]
      \cat{Pos}
            \arrow[r, shift left=0pt, bend left=10, "\fun{up}"]
        & \cat{DL}
            \arrow[l, shift left=0pt, bend left=10, "\fun{pf}"] \\ [-.3em]
      \cat{IntKrip}
            \arrow[u, >->]
            \arrow[r, shift left=0pt, bend left=10, "\fun{up}"]
        & \cat{HA}
            \arrow[u, >->]
            \arrow[l, shift left=0pt, bend left=10, "\fun{pf}"]
    \end{tikzcd}
  $$
  \caption{Functors between categories of (semi)lattices.
           The upper rows are dual adjunctions.
           $\fun{up}$ takes a poset to its lattice of
           upsets, and $\fun{pf}$ takes a lattice to its ordered
           set of prime filters.}
  \label{fig:int}
  \end{figure}
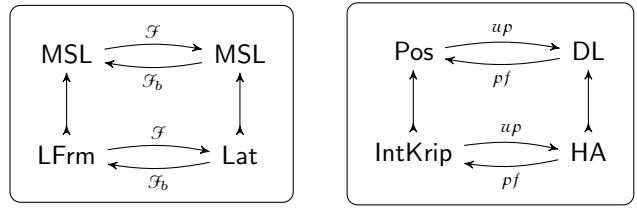

\begin{definition}\label{def:L-mor}
  An \emph{L-morphism} between semilattices $(X, \wedge)$ and $(X', \wedge')$
  is a semilattice homomorphism $f : (X, \wedge) \to (X', \wedge')$ 
  that satisfies for all $x \in X$ and $y', z' \in X'$:
  \begin{itemize}
    \item If $f(x) = \top'$ then $x = \top$;
    \item If $y' \wedge z' \leq f(x)$ then $\exists y, z \in X$ s.t.~%
          $y' \leq' f(y)$ and $z' \leq' f(z)$ and $y \wedge z \leq x$.
          In a picture:
  $$
    \begin{tikzcd}[scale=.95]
        & [-1.3em]
          x \arrow[rrrd, "f"]
            \arrow[dddd, dashed, -]
        & [-1.4em]
        & [-1em]
        & [-1.5em]
        & [-1.5em] \\ [-2.5em]
        &
        &
        &
        & f(x)
        & \\ [-2.2em]
      y     \arrow[ddr, dashed, -]
        &
        & z \arrow[ddl, dashed, -]
            \arrow[rrrd, bend left=-2, dashed, "f"]
        &&& \\ [-2.1em]
        &
        &
        & f(y) \arrow[dd, dashed, -]
               \arrow[lllu, bend left=3, dashed, <-, crossing over, "f" pos=.25]
        &
        & f(z) \arrow[dd, dashed, -] \\ [-2.6em]
        & y \wedge z
        &&&& \\ [-1.7em]
        &
        &
        & y' \arrow[dr, -]
        &
        & z' \arrow[dl, -] \\ [-1.5em]
        &
        &
        &
        & y' \wedge z'  \arrow[uuuuu, crossing over, -]
        &
    \end{tikzcd}
  $$
  \end{itemize}
  We write $\LFrm$ for the category of semilattices and L-morphisms.
\end{definition}

  The category $\cat{LFrm}$ will be used in Section~\ref{sec:logic} as
  frame semantics for weak positive logic.

\begin{proposition}\label{prop:Lmor-latmor}
  If $f : (X, \top, \wedge) \to (X', \top', \wedge')$ is an L-morphism,
  then $f^{-1} : \fil(X', \top', \wedge') \to \fil(X, \top, \wedge)$ is a lattice
  homomorphism.
\end{proposition}
\begin{proof}
  We know that $f^{-1}$ is a semilattice homomorphism.
  The map $f^{-1}$ preserves the bottom element because
  $f^{-1}(\{ \top' \}) = \{ \top \}$.
  For preservation of joins, we need to show that
  \begin{equation}
    f^{-1}(a' \genFil b') = f^{-1}(a') \genFil f^{-1}(b')
  \end{equation}
  for $a', b' \in \fil(X', \top', \wedge')$.
  The inclusion $\supseteq$ follows from the fact that
  $f^{-1}(a' \genFil b')$ is a filter that contains both $f^{-1}(a')$ and
  $f^{-1}(b')$. Conversely, if $x \in f^{-1}(a' \genFil b')$ then
  $f(x) \in a' \genFil b'$ so there exist $y' \in a'$ and $z' \in b'$
  such that $y' \wedge z' \leq f(x)$.
  Since $f$ is an L-frame morphism, we can find $y, z \in X$ such that
  $y' \leq f(y)$ and $z' \leq f(z)$ and $y \wedge z \leq x$.
  This means that $y \in f^{-1}(a')$ and $z \in f^{-1}(b')$, and hence
  $x \in f^{-1}(a') \genFil f^{-1}(b')$.
\end{proof}

  It follows that $\fil$ restricts to a contravariant functor
  $\fil : \LFrm \to \cat{Lat}$.

\begin{proposition}\label{prop:latmor-Lmor}
  Let $h : L \to L'$ be a lattice homomorphism.
  Then $h^{-1} : \fil L' \to \fil L$ is an L-morphism.
\end{proposition}
\begin{proof}
  We know that $h^{-1}$ is a semilattice homomorphism,
  so we only have to show that it satisfies the additional conditions
  from Definition~\ref{def:L-mor}.
  For the first one, suppose $h^{-1}(p') = L$ ($L$ is the top element of $\fil L$).
  Then $\bot \in h^{-1}(p')$, so $\bot' = h(\bot) \in p'$, and therefore
  $p' = L'$.
  
  Next, let $p' \in \fil L'$ and $q, r \in \fil L$ and suppose
  $q \cap r \subseteq h^{-1}(p')$.
  Let $q' := {\uparrow}h[q]$ and $r' := {\uparrow} h[r]$.
  Then it is easy to verify that $q'$ and $r'$ are filters (because $q$ and $r$ are),
  and by construction $q \subseteq h^{-1}(q')$ and $r \subseteq h^{-1}(r')$.
  It remains to show that $q' \cap r' \subseteq p'$.
  Let $a' \in L'$ be such that $a' \in q' \cap r'$.
  Since $a' \in q'$ there exists $a \in q$ such that $h(a) \leq a'$.
  Since $a' \in r'$ there exists $b \in r$ such that $h(b) \leq a'$.
  But then $a \vee b \in q \cap r$, so by assumption $h(a \vee b) \in p'$.
  This implies $a' \in p'$, because
  $h(a \vee b) = h(a) \vee h(b) \leq a'$ and $p'$ is a filter (hence up-closed).
\end{proof}

\subsection{Dual Equivalences}

  HMS duality is obtained from the dual adjunction in
  Proposition~\ref{prop:dual-adj-MSL} by equipping one side with a
  Priestley topology. 

\begin{definition}
  An \emph{HMS-space} is a tuple $\topo{X} = (X, \top, \wedge, \tau)$
  such that
  $(X, \top, \wedge)$ is a semilattice,
  $(X, \tau)$ is a compact topological space,
  and $\topo{X}$ satisfies the \emph{HMS separation axiom}:
  \begin{center}
    for all $x, y \in X$, if $x \not\leq y$ then there exists a clopen
    filter $a$ such that $x \in a$ and $y \notin a$;
  \end{center}
  An \emph{HMS-morphism} is a continuous semilattice homomorphism.
  We write $\HMS$ for the category of HMS-spaces and HMS-morphisms.
\end{definition}

  The HMS separation axiom is a variation of the Priestley separation axiom.
  It immediately implies that any HMS-space is Hausdorff.
  Furthermore, it can be shown that every HMS-space is zero-dimensional,
  i.e.~every open neighbourhood of a point $x$ contains a clopen
  neighbourhood of $x$.
%
  To see this, suppose $b$ is an open neighbourhood of a point $x$ in an HMS-space
  $\topo{X} = (X, \top, \wedge, \tau)$.
  Then for each $y \in X \setminus b$ either $x \not\leq y$ or $y \not\leq x$.
  By the HMS separation axiom, there exist a clopen filter or a complement of
  a clopen filter (which is clopen as well) containing $x$ but not $y$.
  By construction, the intersection of these clopen neighbourhoods of $x$
  is contained in $b$.
  Since $b$ is open and $\topo{X}$ is compact, there exists a finite number
  of such neighbourhoods whose intersection is contained in $b$.
  This finite intersection is the desired clopen neighbourhood of $x$.
  Thus, $(X, \tau)$ is a Stone space.
  
  For future reference, we prove some properties of closed sets and filters of an HMS-space.

\begin{lemma}
  Let $\topo{X} = (X, \top, \wedge, \tau)$ be an HMS-space and
  $c \subseteq X$ a filter. Then
  (i) $c$ is a closed iff
  (ii) $c$ is principal iff
  (iii) $c$ is the intersection of clopen filters.
\end{lemma}
\begin{proof}
  The implication (ii) $\Rightarrow$ (iii) follows from the HMS separation
  axiom and (iii) $\Rightarrow$ (i) is obvious.
  For (i) $\Rightarrow$ (ii), suppose $c$ is not principal.
  Then for each $x \in c$ there exists a $y \in c$ strictly below $x$.
  So for each $x \in c$, using the HMS separation axiom, we can find
  a clopen filter $a_x$ such that $c \not\subseteq a_x$.
  Then $\{ a_x \mid x \in c \}$ is an open cover of $c$ without
  finite subcover.
  (Indeed, for every finite collection $a_{x_1}, \ldots, a_{x_n}$
  we can find $y_1, \ldots, y_n$ such that $y_i \in c$ but $y_i \notin a_i$.
  Then $y_1 \wedge \cdots \wedge y_n$ is in $c$ but not in any of the $a_{x_i}$.)
  So $c$ is not compact, hence not closed.
\end{proof}

\begin{lemma}\label{lem:closed-upward-closure}
  Let $c$ be a closed subset of an M-space $\topo{X} = (X, \wedge, \tau)$.
  Then ${\uparrow}c$ is closed as well.
\end{lemma}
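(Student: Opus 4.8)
The plan is to prove that the complement of ${\uparrow}c$ is open, by exhibiting for each point $y \notin {\uparrow}c$ a clopen neighbourhood of $y$ that misses ${\uparrow}c$ entirely. Since M-spaces are compact by \eqref{it:M-cpt}, the fact that $c$ is closed means it is compact, and this is the property I will leverage through the HMS separation axiom \eqref{it:M-sep}.

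First I would unpack membership: $y \notin {\uparrow}c$ says precisely that $x \not\leq y$ for every $x \in c$. Applying \eqref{it:M-sep} to each such pair, I obtain for every $x \in c$ a clopen filter $a_x$ with $x \in a_x$ and $y \notin a_x$. The family $\{ a_x \mid x \in c \}$ is then an open cover of $c$, so by compactness there is a finite subcover, say $c \subseteq a_{x_1} \cup \cdots \cup a_{x_n}$.

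Next I set $U := X \setminus (a_{x_1} \cup \cdots \cup a_{x_n})$. This is clopen, being the complement of a finite union of clopen sets, and it contains $y$ since $y \notin a_{x_i}$ for each $i$. It remains to check that $U \cap {\uparrow}c = \emptyset$. Given $z \in {\uparrow}c$, choose $x \in c$ with $x \leq z$; the finite cover gives $x \in a_{x_j}$ for some $j$, and since $a_{x_j}$ is a filter, hence an upset, $x \leq z$ forces $z \in a_{x_j}$. Thus $z \notin U$, as desired. As $y$ was arbitrary, $X \setminus {\uparrow}c$ is open and so ${\uparrow}c$ is closed.

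The argument is essentially a single compactness step, so I do not expect a serious obstacle; the one point that must be used with care is that clopen filters are upward closed. This is exactly what turns a finite cover of $c$ into a cover of the larger set ${\uparrow}c$, and what makes the neighbourhood $U$ avoid all of ${\uparrow}c$ rather than merely $c$ itself.
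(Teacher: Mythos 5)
Your proof is correct and follows exactly the paper's argument: separate $y$ from each $x \in c$ by a clopen filter via the HMS axiom, extract a finite subcover of $c$ by compactness, and use the fact that clopen filters are upsets to conclude that the complement of the finite union is a neighbourhood of $y$ disjoint from ${\uparrow}c$. No gaps; the care you take with upward closure is precisely the step the paper also relies on.
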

\begin{proof}
  If $y \notin {\uparrow}c$ then for each $x \in c$ we have
  $x \not\leq y$, hence a clopen filter
  $a_x$ containing $x$ but not $y$. Then $c \subseteq \bigcup_{x \in c} a_x$,
  so by compactness we find a finite subcover, say,
  $c \subseteq a_1 \cup \cdots \cup a_n$.
  Since all the $a_i$ are upward closed, we have
  ${\uparrow}c \subseteq a_1 \cup \cdots \cup a_n$.
  By construction, none of the $a_i$ contain $y$, so
  $X \setminus (a_1 \cup \cdots \cup a_n)$ is an open neighbourhood of
  $y$ disjoint from ${\uparrow}c$.
\end{proof}
  
  The clopen filters of an HMS-space form a semilattice with the whole
  space as top element and intersection as meet.
  This gives rise to a contravariant functor
  $$
    \cfil : \HMS \to \MSL,
  $$
  which sends HMS-morphisms to their inverse.
  In the converse direction, for every semilattice $(X, \top, \wedge)$
  we can equip $\fil(X, \top, \wedge)$ with a topology to
  obtain an HMS space, as follows.

\begin{definition}
  Let $A$ be a bounded semilattice.
  Define $\tfil A = (\fil A, A, \cap, \tau_A)$,
  where $\tau_A$ is the topology generated by
  \begin{equation}\label{eq:sl-mspace-subbase}
    \{ \theta_A(a) \mid a \in A \} \cup \{ \theta_A(a)^c \mid a \in A \},
  \end{equation}
  with $\theta_A(a) = \{ p \in \fil A \mid a \in p \}$
  and $\theta_A(a)^c = \fil A \setminus \theta_A(a)$.
  Defining $\tfil h = h^{-1}$ for a semilattice homomorphism $h$,
  we obtain a contravariant functor
  $
    \tfil : \MSL \to \HMS.
  $
\end{definition}

  We now obtain (a reformulation of) the duality 
  Hofmann, Mislove and Stralka~\cite{HofMisStr74,DavWer83,ClaDav98}.

\begin{theorem}\label{thm:mspace-bsl}
  The functors $\cfil$ and $\tfil$ establish a dual equivalence
  $\HMS \equiv^{\op} \MSL$.
\end{theorem}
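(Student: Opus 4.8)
The plan is to exhibit, for each bounded semilattice $A$ and each M-space $\topo X$, natural isomorphisms $A \cong \cfil\tfil A$ and $\topo X \cong \tfil\cfil\topo X$. On the algebraic side I would reuse the map $\theta_A(a) = \{ x \in \bfil A \mid a \in x \}$ from the dual adjunction; by construction of the topology $\tau_A$ each $\theta_A(a)$ is a clopen filter of $\tfil A$, so $\theta_A$ corestricts to a bounded semilattice homomorphism $A \to \cfil\tfil A$. On the spatial side I would define $\epsilon_{\topo X}(x) = \{ a \in \cfil\topo X \mid x \in a \}$; one checks readily that this is a proper non-empty filter of $\cfil\topo X$ (non-empty since $X$ itself is a clopen filter, proper since $\emptyset \notin \epsilon_{\topo X}(x)$) and that $\epsilon_{\topo X}$ is a semilattice homomorphism. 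Naturality of both families is a routine diagram chase that I would relegate to a line.

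First I would check that $\theta_A$ is an isomorphism. Injectivity is direct: if $a \neq b$, say $a \not\leq b$, then $a \neq \bot$, so ${\uparrow}a$ is a proper non-empty filter with ${\uparrow}a \in \theta_A(a)$ and ${\uparrow}a \notin \theta_A(b)$. Surjectivity is where compactness enters. Given a clopen filter $U$ of $\tfil A$, it is in particular closed, hence principal by Lemma~\ref{lem:closed-filter-principal}; the empty case gives $U = \theta_A(\bot)$, so assume $U = {\uparrow}c$ with $c \in \bfil A$. For each $x \in U^c$ we have $c \not\subseteq x$, so we may pick $a_x \in c \setminus x$, giving $U^c \subseteq \bigcup_x \theta_A(a_x)^c$. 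Compactness of the closed set $U^c$ yields a finite subcover, and setting $a$ to be the meet of the corresponding finitely many $a_x$ gives $a \in c$ and $\theta_A(a)^c \supseteq U^c$, i.e.~$\theta_A(a) \subseteq U$. Conversely $a \in c$ gives $c \in \theta_A(a)$, and since $\theta_A(a)$ is an upset we get $U = {\uparrow}c \subseteq \theta_A(a)$; hence $U = \theta_A(a)$.

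Next I would show $\epsilon_{\topo X}$ is an isomorphism in $\cat{MSpace}$. Continuity follows from the identity $\epsilon_{\topo X}^{-1}(\theta_{\cfil\topo X}(a)) = a$, which is clopen, and injectivity follows from the HMS separation axiom exactly as for $\theta_A$. For surjectivity, let $\mathcal{F}$ be a proper non-empty filter of $\cfil\topo X$. Its members form a family of closed sets with the finite intersection property, since $\mathcal{F}$ is closed under finite meets and omits $\emptyset$; by compactness $C = \bigcap\mathcal{F}$ is a non-empty closed filter, hence principal, say $C = {\uparrow}x$. That $\mathcal{F} \subseteq \epsilon_{\topo X}(x)$ is clear since $x \in C \subseteq a$ for every $a \in \mathcal{F}$. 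For the reverse inclusion, if $x \in a$ then $C \subseteq a$, so $a^c \subseteq \bigcup_{b \in \mathcal{F}} b^c$; compactness of the clopen set $a^c$ produces $b_1, \dots, b_n \in \mathcal{F}$ with $b_1 \cap \cdots \cap b_n \subseteq a$, whence $a \in \mathcal{F}$ because $\mathcal{F}$ is an upset closed under meets. Thus $\epsilon_{\topo X}$ is a continuous semilattice bijection; as M-spaces are compact Hausdorff, it is automatically a homeomorphism and therefore an isomorphism.

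The main obstacle is the two surjectivity arguments, which recover an element of $A$ from a clopen filter of $\tfil A$, and a point of $X$ from an abstract proper filter of $\cfil\topo X$. Both run on the same engine: represent the relevant closed filter as principal via Lemma~\ref{lem:closed-filter-principal}, then use compactness to collapse an a priori infinite separating family into a single finite meet. Once these compactness arguments are set up correctly, the homomorphism, continuity, and naturality checks are all routine bookkeeping.
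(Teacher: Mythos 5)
Your proposal is correct and takes essentially the same approach as the paper: the same unit maps $\theta_A$ and $\eta_{\topo{X}}$ (your $\epsilon_{\topo{X}}$), injectivity from the HMS separation axiom, surjectivity via compactness arguments resting on Lemma~\ref{lem:closed-filter-principal}, and the conclusion that a bijective homomorphism which is a continuous bijection of compact Hausdorff spaces is an isomorphism. If anything, you are slightly more careful than the paper in one spot: you justify by the finite intersection property that $\bigcap\mathcal{F}$ is non-empty before invoking principality, a step the paper's proof of surjectivity of $\eta_{\topo{X}}$ leaves implicit.
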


\begin{remark}
  The following alternative proof for HMS duality was pointed out by
  the reviewer:
  The forgetful functor $\mathcal{U}: \DL \to \MSL$ from distributive lattices
  to meet-semilattices has a left adjoint $\mathcal{Fr}: \MSL \to \DL$.
  It follows that, for any semilattice $L$, the hom sets $\Hom_{\MSL}(L, 2)$ and
  $\Hom_{\DL}(\fun{Fr}L, 2)$ are naturally isomorphic.
  Using this, it is easy to see that the Priestley space dual to $\fun{Fr}L$
  coincides with the HMS space dual to $L$.
  We can then derive HMS duality for semilattices by observing that the
  Priestley spaces dual to the free distributive lattice over a semilattice
  are precisely those whose underlying poset forms a semilattice.
\end{remark}

  We wish to restrict this to a duality for lattices.
  To this end, we restrict the category $\HMS$ to L-spaces
  and suitable morphisms.

\begin{definition}
  A \emph{lattice space} or \emph{L-space} is an HMS-space
  $\topo{X} = (X, \top, \wedge, \tau)$ such that $a \genFil b$
  is clopen whenever $a$ and $b$ are clopen filters.
  An L-space morphism is a continuous L-morphism.
  We write $\LSpace$ for the category of L-spaces and their morphisms.
\end{definition}

%

\begin{theorem}\label{thm:duality-lat-lspace}
  The duality for bounded semilattices from Theorem~\ref{thm:mspace-bsl}
  restricts to a duality
  $$
    \cat{LSpace} \equiv^{\op} \cat{Lat}.
  $$
\end{theorem}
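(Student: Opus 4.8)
The plan is to show that the two functors $\cfil$ and $\tfil$ from Theorem~\ref{thm:mspace-bsl}, together with the natural isomorphisms $\eta$ and $\theta$, all restrict to the subcategories $\cat{LSpace}$ and $\cat{Lat}$. Since neither $\cat{LSpace} \hookrightarrow \cat{MSpace}$ nor $\cat{Lat} \hookrightarrow \cat{BSL}$ is full (the morphisms must preserve the extra structure of joins), I must check the restriction on both objects and morphisms.

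First I would verify that $\cfil$ restricts to a functor $\cat{LSpace} \to \cat{Lat}$. On objects this is immediate from the definition of an L-space: if $\topo{X}$ is an L-space then $a \curlyvee b$ is a clopen filter whenever $a, b$ are, and since $a \curlyvee b$ is the least filter of $\topo{X}$ containing $a$ and $b$, it is in particular the join of $a$ and $b$ computed in the poset $\cfil\topo{X}$; hence $\cfil\topo{X}$ is a (bounded) lattice. On morphisms, an L-space morphism $f$ is by definition also an L-morphism, so by Proposition~\ref{prop:Lmor-latmor} the bounded semilattice homomorphism $\cfil f = f^{-1}$ preserves binary joins and is therefore a lattice homomorphism.

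The main work is to show that $\tfil$ restricts to $\cat{Lat} \to \cat{LSpace}$, i.e.\ that $\tfil A$ is an L-space for every lattice $A$. By Theorem~\ref{thm:mspace-bsl} the map $\theta_A$ is an isomorphism onto $\cfil\tfil A$, so every clopen filter of $\tfil A$ has the form $\theta_A(a)$ for some $a \in A$. Thus it suffices to prove the identity
$$
  \theta_A(a) \curlyvee \theta_A(b) = \theta_A(a \vee b),
$$
which exhibits the join of two clopen filters as clopen. The inclusion $\subseteq$ is easy, since $\theta_A(a \vee b)$ is a clopen filter containing both $\theta_A(a)$ and $\theta_A(b)$. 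For $\supseteq$, if $a = \bot$ or $b = \bot$ the claim is trivial (as $\theta_A(\bot) = \emptyset$), so assume $a, b \neq \bot$; then ${\uparrow}a$ and ${\uparrow}b$ are non-empty proper filters of $A$ lying in $\theta_A(a)$ and $\theta_A(b)$ respectively, and their intersection ${\uparrow}a \cap {\uparrow}b = {\uparrow}(a \vee b)$ shows, via the description of joins in Lemma~\ref{lem:join}, that any $x \in \bfil A$ with $a \vee b \in x$ lies in $\theta_A(a) \curlyvee \theta_A(b)$. On morphisms, given a lattice homomorphism $h$, Proposition~\ref{prop:latmor-Lmor} tells us that $\tfil h = h^{-1}$ is an L-morphism, and it is already an M-morphism, so it is an L-space morphism.

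It remains to check that $\eta$ and $\theta$ restrict to natural isomorphisms in the subcategories; this is where I expect only minor friction. For an L-space $\topo{X}$ the map $\eta_{\topo{X}}$ is an $\cat{MSpace}$-isomorphism between two L-spaces (the codomain $\tfil\cfil\topo{X}$ is an L-space by the previous paragraph), and any semilattice isomorphism is automatically an L-morphism in both directions: taking $y = f^{-1}(y')$ and $z = f^{-1}(z')$ witnesses the condition of Definition~\ref{def:L-mor}, since $f^{-1}$ preserves meets and order. Hence $\eta_{\topo{X}}$ is an $\cat{LSpace}$-isomorphism. Dually, for a lattice $A$ the $\cat{BSL}$-isomorphism $\theta_A$ is an order isomorphism, hence preserves all existing joins, and is thus a lattice isomorphism. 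Assembling these restrictions yields the dual equivalence $\cat{LSpace} \equiv^{\op} \cat{Lat}$. The crux of the whole argument is the join identity $\theta_A(a) \curlyvee \theta_A(b) = \theta_A(a \vee b)$ establishing that $\tfil A$ is an L-space; everything else follows formally from the ambient duality of Theorem~\ref{thm:mspace-bsl} together with Propositions~\ref{prop:Lmor-latmor} and~\ref{prop:latmor-Lmor}.
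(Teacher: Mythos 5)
Your proposal is correct and follows essentially the same route as the paper: the heart of both arguments is the identity $\theta_A(a) \curlyvee \theta_A(b) = \theta_A(a \vee b)$ (proved via the principal filters ${\uparrow}a$ and ${\uparrow}b$), with the object restriction of $\cfil$ coming from the L-space axiom and the morphism restrictions handled by Propositions~\ref{prop:Lmor-latmor} and~\ref{prop:latmor-Lmor}. Your additional check that the units $\eta$ and $\theta$ remain isomorphisms in the non-full subcategories is a point the paper passes over silently, but it is routine and does not change the substance of the argument.
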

\begin{proof}
  We only have to verify that the restriction of $\cfil$ to $\cat{LSpace}$ lands in
  $\cat{Lat}$, and the restriction of $\tfil$ to $\cat{Lat}$ lands in $\cat{LSpace}$.
%
  The former follows from the fact that the clopen filters of an L-space are
  closed under $\genFil$, together with Proposition~\ref{prop:Lmor-latmor}.

  For the latter, suppose that $L$ is a lattice and let $\theta_L(a)$ and
  $\theta_L(b)$ be two arbitrary clopen filters of $\tfil L$.
  Writing $x, y, z$ for elements in $\tfil L$, we have
  $$
    \theta_L(a) \genFil \theta_L(b)
      = {\uparrow} \{ x \cap y \mid x \in \theta_L(a), y \in \theta_L(b) \}
      = \theta_L(a \vee b)
  $$
  Let us elaborate on the last equality.
  If $x \in \theta_L(a)$ and $y \in \theta_L(b)$ then
  $a \in x$ and $b \in y$, so $a \vee b \in x \cap y$.
  So $z \supseteq x \cap y$ implies $a \vee b \in z$, and therefore we have
  ``$\subseteq$''.
  Conversely, if $z \in \theta_L(a \vee b)$ then we need to find
  $x \in \theta_L(a)$ and $y \in \theta_L(b)$ such that $x \cap y \subseteq z$.
  Let $x = {\uparrow}a \in \theta_L(a)$ and $y = {\uparrow}b \in \theta_L(b)$. 
  Then $d \in x \cap y$ implies $a \leq d$ and $b \leq d$, hence
  $a \vee b \leq d$. Since $z \in \theta_L(a \vee b)$ this implies $d \in z$,
  and therefore $x \cap y \subseteq z$. This proves ``$\supseteq$''.
  The restriction on morphisms follows from Proposition~\ref{prop:latmor-Lmor}.
\end{proof}

\begin{remark}
  In~\cite{MosJip09}, Moshier and Jipsen study a spectral analogue of
  Hofmann, Mislove and Stralka's duality for semilattices,
  which they also call HMS duality.
  Their ``HMS spaces'' relate to the original ones in the same way
  spectral spaces relate to Priestley spaces.
  Moshier and Jipsen also restrict their duality to lattices, obtaining what
  they call ``BL spaces''. Likewise, these are equivalent to our L-spaces
  through the same change of topology. Note that, while the join on BL spaces
  is defined via an infinite intersection of open filters
  (see \cite[Section 3]{MosJip09}), it coincides with the usual join of
  filters considered here.

  In \cite{Dmi21} Theorem~\ref{thm:duality-lat-lspace} was proven with
  different terminology: L-spaces and -morphisms are called ``PUP spaces''
  and ``PUP morphisms,'' and the category $\LSpace$ is called $\PUP$.
\end{remark}

\subsection{Completions of lattices}\label{subsec:prelim-compl}

  We relate several completions of a lattice to collections
  of certain filters of its dual L-space.

\begin{definition}
  A \emph{completion} of a lattice $L$ is a pair $(e, C)$ where $C$ is a
  complete lattice and $e : L \to C$ is a lattice embedding.
  An element in $C$ is called \emph{open} if it is the join of elements in the
  image of $e$, and \emph{closed} if it is the meet of elements in the image of
  $e$.
  
  A completion $(e, C)$ is called \emph{dense}
  if every element of $C$ can be written as the join of meets of elements
  in $L$, and as the meet of joins of elements in $L$.
  It is called \emph{compact} if for any set $A$ of closed elements of $C$
  and $B$ of open elements of $C$, $\bigwedge A \leq \bigvee B$ if and
  only if there are finite subsets $A' \subseteq A$ and $B' \subseteq B$
  such that $\bigwedge A' \leq \bigvee B'$.
\end{definition}

  It is well known that every lattice has a dense and compact completion
  which is unique up to isomorphism,
  see e.g.~\cite[Propositions~2.6 and~2.7]{GehHar01}.

\begin{definition}
  Let $L$ be a lattice.
  \begin{enumerate}
    \item The \emph{ideal completion} of $L$ is the collection $\IdComp L$ of
          ideals of $L$ ordered by inclusion, with $i : L \to \IdComp L$ given
          by $a \mapsto {\downarrow}a$.
          Meets in $\IdComp L$ are given by intersection.
          As a consequence, the join of a collection of ideals is the smallest
          ideal containing their union.
    \item The \emph{filter completion} of $L$ is the collection
          $\FiltComp L$ of filters of $L$ ordered by reverse inclusion,
          with $i : L \to \FiltComp L : a \mapsto {\uparrow}a$.
          Then arbitrary joins in $\fun{F}L$ are given by intersections,
          and the meet of a collection of filters in $\FiltComp L$ is the
          smallest filter of $L$ containing their union.
    \item The \emph{canonical extension} of $L$ is the unique dense and
          compact completion of $L$.
    \item The \emph{$\Pi_1$-completion} of a lattice $L$ is given by
          the composition of the ideal and the filter completion.
          That is, it consists of the lattice $\IdComp(\FiltComp L)$
          with inclusion $a \mapsto \{ p \in \FiltComp L \mid a \in p \}$.
  \end{enumerate}
\end{definition}

  The $\Pi_1$-completion was studied in \cite{GehPri08}.
  Note that the ideal and filter completions are closely related.
  If we denote by $L^\circ$ the lattice $L$ with the order reversed,
  then the ideals of $L$ correspond to the filters of $L^\circ$
  and we get $\IdComp L = (\FiltComp L^\circ)^\circ$.
  
  A filter $p$ of an L-space $\topo{X} = (X, \top, \wedge, \tau)$
  is called \emph{saturated} if it equals the intersection of all
  open filters containing $p$.
  The collection of saturated filters of $\topo{X}$ is denoted by
  $\sfil\topo{X}$.

\begin{proposition}\label{prop:lattice-completions}
  Let $L$ be a lattice and $\topo{X}_L$ its dual L-space.
  \begin{enumerate}
    \item The filter completion of $L$ is isomorphic to the
          complete lattice $\kfil(\topo{X}_L)$ of principal filters of
          $\topo{X}_L$ of $L$, with inclusion
          $\theta_L: L \to \kfil(\topo{X}_L) : a \mapsto \theta(a)$.
    \item The canonical extension of $L$ is isomorphic to the
          complete lattice $\sfil(\topo{X}_L)$ of saturated filters of
          $\topo{X}_L$, with inclusion
          $\theta_L : L \to \sfil(\topo{X}_L) : a \mapsto \theta(a)$.
    \item The $\Pi_1$-completion of $L$ is isomorphic to the
          complete lattice $\fil(\topo{X}_L)$ of filters of $\topo{X}_L$,
          with inclusion $\theta_L : L \to \fil(\topo{X}_L) : a \mapsto \theta(a)$.
  \end{enumerate}
\end{proposition}
\begin{proof}
The first item follows from the lattice of principal filters of $L$ being isomorphic to $L^\circ$ and $\fil L = (\FiltComp L)^o$. The second item is similar to~\cite[Theorem~4.1]{MosJip09}. Finally, the third item follows from the mentioned above connection between filter and ideal completions $\IdComp L = (\FiltComp L^\circ)^\circ$ together with $\fil L = (\FiltComp L)^o$.
\end{proof}

\section{Semilattice semantics for weak positive logic}\label{sec:logic}

  We use the duality and dual adjunction from Section~\ref{sec:lattices}
  to give frame semantics for weak positive logic, i.e.~the logic the
  same signature as positive logic, but with lattices as algebraic semantics.
  Inspired by the fact that the filters of a semilattice form a lattice,
  we use semilattices as frames and (principal) filters as denotations
  of formulae.

  We start this section by giving an axiomatisation of our logic.
  By design the algebraic semantics is simply given by lattices.
  In Section~\ref{subsec:l-frame} we define frames and models,
  give examples, and prove that the frame semantics is sound.
  In Section~\ref{subsec:complete} we use the duality from
  Section~\ref{sec:lattices} to derive completeness for weak positive logics
  with respect to several classes of frames.
  We give the standard translation into a suitable first-order logic
  and prove Sahlqvist correspondence in Section~\ref{subsec:corr},
  where we also work out specific examples of correspondence results.
  
  To distinguish the various notions of entailment each has their own
  notation, which are summarised in Table~\ref{tab:entailment}.
  We denote the interpretation of a formula $\phi$ in a lattice $\amo{A}$
  and in a frame $\mo{M}$
  by $\llp \phi \rrp_{\amo{A}}$ and $\llb \phi \rrb^{\mo{M}}$, respectively.
  Besides, we write $\ftop$ and $\fmeet$ for the top element and meets of a
  semilattice when it is regarded as frame semantics.

  \begin{table}[h]
  \centering
    \begin{tabular}{lll}
      \toprule
        Notation & Purpose & Location \\ \midrule
        $\phi \vdash \psi$ & Syntactic entailment & Def.~\ref{def:logic} \\
        $\phi \cdash \psi$ & Algebraic entailment & Def.~\ref{def:alg-entail} \\
        $\phi \Vdash \psi$ & Semantic entailment & Def.~\ref{def:interpretation} \\
        $\phi \Vdash_{\cat{LSpace}} \psi$ & Topological semantic entailment
          & Def.~\ref{def:inerpretation-lspace} \\
        $\phi \models \psi$ & First-order entailment & Sec.~\ref{subsec:corr} \\
      \bottomrule
    \end{tabular}
    \caption{Different notions of entailment.}
    \label{tab:entailment}
  \end{table}

\subsection{Logic and Algebraic Semantics}\label{subsec:algsem}

  Let $\lan{L}(\Prop)$ denote the language generated by the grammar
  $$
    \phi ::= p \mid \top \mid \bot \mid \phi \wedge \phi \mid \phi \vee \phi,
  $$
  where $p$ ranges over some arbitrary but fixed set $\Prop$ of proposition
  letters.
  If no confusion arises we omit reference to $\Prop$ and simply
  write $\lan{L}$.
%
  We define logics based on $\lan{L}$
  as a collection of \emph{consequence pairs},
  similar to e.g.~\cite{Dun95}.
  A consequence pair is an expressions of the form $\phi \cp \psi$
  where $\phi$ and $\psi$ are formulae in $\lan{L}$,
  and intuitively means: ``If $\phi$ holds, then
  so does $\psi$.''
  
\begin{definition}\label{def:logic}
  Let $\log{L}$ be the smallest set of consequence pairs
  closed under the following axioms and rules:
  \begin{align*}
    p \cp \top,
      &\qquad \bot \cp p,
      &\text{\emph{top} and \emph{bottom}} \\
    p \cp p,
      &\qquad \frac{p \cp q \quad q \cp r}{p \cp r},
      &\text{\emph{reflexivity} and \emph{transitivity}} \\
    p \wedge q \cp p,
       \qquad p \wedge q \cp q,
      &\qquad \dfrac{r \cp p \quad r \cp q}{r \cp p \wedge q},
      &\text{\emph{conjunction rules}} \\
    p \cp p \vee q,
       \qquad q \cp p \vee q,
      &\qquad \dfrac{p \cp r \quad q \cp r}{p \vee q \cp r}
      &\text{\emph{disjunction rules}}
  \end{align*}
  If $\Gamma$ is a set of consequence pairs then we let
  $\log{L}(\Gamma)$ denote the smallest set of consequence pairs
  closed under uniform substitution, the axioms and rules mentioned above and those in $\Gamma$.
  We write $\phi \vdash_{\Gamma} \psi$ if $\phi \cp \psi \in \log{L}(\Gamma)$
  and $\phi \dashv\vdash_{\Gamma} \psi$ if both $\phi \vdash_{\Gamma} \psi$
  and $\psi \vdash_{\Gamma} \phi$.
  If $\Gamma$ is the empty set then we simply write
  $\phi \vdash \psi$ and $\phi \dashv\vdash \psi$.
\end{definition}

  The algebraic semantics of the logic $\lan{L}$ are simply lattices.
  We establish this formally.

\begin{definition}
  Let $A$ be a lattice with operations $\top_A, \bot_A, \wedge_A, \vee_A$,
  and induced order $\leq_A$.
  A \emph{lattice model} is a pair $\amo{A} = (A, \sigma)$ 
  consisting of a lattice $A$ and an assignment $\sigma : \Prop \to A$
  of the proposition letters.
  The assignment $\sigma$ uniquely extends to a map
  $\llp \cdot \rrp_{\amo{A}} : \lan{L} \to A$
  by interpreting connectives with their lattice counterparts.
  
  We say that a lattice $A$ \emph{validates} a consequence pair $\phi \cp \psi$
  if $\llp \phi \rrp_{\amo{A}} \leq_A \llp \psi \rrp_{\amo{A}}$ for
  all lattice models $\amo{A}$ based on $A$, notation: $A \cdash \phi \cp \psi$.
  If $\Gamma$ is a set of consequence pairs then we write
  $\cat{Lat}(\Gamma)$ for the full subcategory of $\cat{Lat}$ whose objects
  validate all consequence pairs in $\Gamma$.
\end{definition}
  
\begin{definition}\label{def:alg-entail}
  Let $\Gamma \cup \{ \phi \cp \psi \}$ be a set of consequence pairs.
  Write $\phi \cdash[\Gamma] \psi$ if 
  $\llp \phi \rrp_{\amo{A}} \leq_A \llp \psi \rrp_{\amo{A}}$ for
  every lattice model $\amo{A} = (A, \sigma)$ with $A \in \cat{Lat}(\Gamma)$.
  We abbreviate $\phi \cdash[\emptyset] \psi$
  to $\phi \cdash \psi$.
\end{definition}

  Observe that $\dashv\vdash_{\Gamma}$ is an equivalence relation on $\lan{L}$.
  Write $L(\Gamma)$ for the set of $\dashv\vdash_{\Gamma}$-equivalence classes
  of $\lan{L}$, and denote by $[\phi]$ the equivalence class of $\phi$ in
  $L(\Gamma)$.
  Then it follows from the rules in Definition~\ref{def:logic} that
  $L(\Gamma)$ carries a lattice structure, where
  $\top_L = [\top]$, $\bot_L = [\bot]$,
  $[\phi] \wedge_L [\psi] = [\phi \wedge \psi]$ and
  $[\phi] \wedge_L [\psi] = [\phi \vee \psi]$.
  Moreover, $L(\Gamma)$ is in $\cat{Lat}(\Gamma)$, and
  setting $\sigma_L : \Prop \to L(\Gamma) : p \mapsto [p]$ yields
  lattice model $\amo{L}_{\Gamma} = (L(\Gamma), \sigma_L)$ which acts as
  the Lindenbaum-Tarski algebra. It follows from induction on the
  structure of $\phi$ that  $\llp \phi \rrp_{\amo{L}_{\Gamma}} = [\phi]$
  for all $\lan{L}$-formulae $\phi$.

\begin{lemma}\label{lem:LT}
  We have $\phi \cdash[\Gamma] \psi$ if and only if
  $\llp \phi \rrp_{\amo{L}_{\Gamma}} \leq_L \llp \psi \rrp_{\amo{L}_{\Gamma}}$.
\end{lemma}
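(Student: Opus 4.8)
The plan is to lean on the two facts recorded just before the statement: that $\llp \phi \rrp_{\amo{L}_{\Gamma}} = [\phi]$, and that $\amo{L}_{\Gamma} = (L(\Gamma), \sigma_L)$ is a model based on a lattice lying in $\cat{Lat}(\Gamma)$. The forward implication is then immediate: assuming $\phi \cdash[\Gamma] \psi$, the universal quantifier in Definition~\ref{def:alg-entail} ranges over all models whose underlying lattice is in $\cat{Lat}(\Gamma)$, and $\amo{L}_{\Gamma}$ is one such model, so instantiating at it gives precisely $\llp \phi \rrp_{\amo{L}_{\Gamma}} \leq_L \llp \psi \rrp_{\amo{L}_{\Gamma}}$. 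No further work is needed here.

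The content is in the backward implication. I would first convert the semantic inequality into a syntactic one. Using $\llp \phi \rrp_{\amo{L}_{\Gamma}} = [\phi]$, the hypothesis reads $[\phi] \leq_L [\psi]$, i.e.\ $[\phi] \wedge_L [\psi] = [\phi]$; since $[\phi] \wedge_L [\psi] = [\phi \wedge \psi]$ by the lattice structure on $L(\Gamma)$, this says $\phi \dashv\vdash_{\Gamma} \phi \wedge \psi$. Combining $\phi \vdash_{\Gamma} \phi \wedge \psi$ with the conjunction axiom $\phi \wedge \psi \cp \psi$ and transitivity yields $\phi \vdash_{\Gamma} \psi$. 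It then remains to establish \emph{soundness}: $\phi \vdash_{\Gamma} \psi$ implies that every lattice model $\amo{A} = (A, \sigma)$ with $A \in \cat{Lat}(\Gamma)$ satisfies $\llp \phi \rrp_{\amo{A}} \leq_A \llp \psi \rrp_{\amo{A}}$, which is exactly $\phi \cdash[\Gamma] \psi$.

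Soundness I would prove by induction on the derivation of $\phi \cp \psi$ in $\log{L}(\Gamma)$, fixing an arbitrary $A \in \cat{Lat}(\Gamma)$ and an arbitrary assignment $\sigma$. The base cases are the axioms of Definition~\ref{def:logic}: $\phi \cp \top$ and $\bot \cp \phi$ hold because $\top_A$ and $\bot_A$ are the greatest and least elements; $\phi \cp \phi$ is reflexivity of $\leq_A$; the conjunction and disjunction axioms hold because $\wedge_A$ is a lower bound and $\vee_A$ an upper bound of their arguments; and the pairs in $\Gamma$ hold by the very definition of $\cat{Lat}(\Gamma)$. The inductive steps treat transitivity (transitivity of $\leq_A$), the rule introducing $\phi \wedge \psi$ on the right (because $\wedge_A$ is the \emph{greatest} lower bound), and the rule eliminating $\phi \vee \psi$ on the left (because $\vee_A$ is the \emph{least} upper bound). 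I do not expect a genuine obstacle here, only the bookkeeping subtlety that each rule must be verified for a fixed but arbitrary $\sigma$ so that the conclusion is stable under the recursive clauses defining $\llp - \rrp_{\amo{A}}$. A cleaner packaging, which I would likely prefer, is to observe that every assignment $\sigma : \Prop \to A$ with $A \in \cat{Lat}(\Gamma)$ extends to a map $\hat\sigma : L(\Gamma) \to A$ by $\hat\sigma([\phi]) = \llp \phi \rrp_{\amo{A}}$: the content of soundness is exactly that $\hat\sigma$ is well defined (and then automatically a lattice homomorphism), whence $[\phi] \leq_L [\psi]$ gives $\llp \phi \rrp_{\amo{A}} = \hat\sigma([\phi]) \leq_A \hat\sigma([\psi]) = \llp \psi \rrp_{\amo{A}}$ since homomorphisms preserve the order.
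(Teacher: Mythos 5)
Your proof is correct, but your primary route is genuinely different from the paper's. The paper's proof of the backward direction never mentions derivability: given a model $\amo{A} = (A, \sigma_A)$ with $A \in \cat{Lat}(\Gamma)$, it extends the assignment $[p] \mapsto \sigma_A(p)$ to a lattice homomorphism $i : \amo{L}_{\Gamma} \to \amo{A}$ satisfying $i([\phi]) = \llp \phi \rrp_{\amo{A}}$, and concludes from $[\phi] \leq_L [\psi]$ by monotonicity of $i$ --- which is exactly the ``cleaner packaging'' you sketch at the end. Your main route instead converts the hypothesis into $\phi \vdash_{\Gamma} \psi$ via the conjunction rules and then proves soundness by induction on derivations. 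This is sound (the forward direction is the same in both: instantiate Definition~\ref{def:alg-entail} at $\amo{L}_{\Gamma}$, using that $L(\Gamma) \in \cat{Lat}(\Gamma)$), but note two things. First, it front-loads material the paper defers: the equivalence between $[\phi] \leq_L [\psi]$ and $\phi \vdash_{\Gamma} \psi$ is precisely the content of the paper's proof of Theorem~\ref{thm:alg-sem}, the one place Lemma~\ref{lem:LT} is used; so your decomposition partially duplicates that theorem inside the lemma (no circularity, since you prove soundness from scratch, just redistribution of labour). Second, your closing observation is the sharper one: an induction over the proof system cannot really be avoided in either approach, because well-definedness of $i$ --- that $\phi \dashv\vdash_{\Gamma} \psi$ forces $\llp \phi \rrp_{\amo{A}} = \llp \psi \rrp_{\amo{A}}$ --- \emph{is} soundness. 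The paper compresses this into the parenthetical ``this is well defined because $A$ validates all consequence pairs in $\Gamma$'', which is terser than the point deserves; your explicit version is, if anything, more complete than the printed proof.
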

\begin{proof}
  The ``only if'' holds by definition.
  Conversely, if $A \in \cat{Lat}(\Gamma)$ and $\amo{A} = (A, \sigma_A)$
  is a lattice model, then the assignment $[p] \mapsto \sigma_A(p)$
  extends to a lattice homomorphism $i : \amo{L}_{\Gamma} \to \amo{A}$
  such that $[\phi] = \llp \phi \rrp_{\amo{A}}$.
  (This is well defined because $A$ validates all consequence pairs in $\Gamma$.)
  Then $\llp \phi \rrp_{\amo{L}_{\Gamma}} \leq_L \llp \psi \rrp_{\amo{L}_{\Gamma}}$
  implies $[\phi] \leq_L [\psi]$. Monotonicity of $i$ yields
  $\llp \phi \rrp_{\amo{A}} \leq_L \llp \psi \rrp_{\amo{A}}$,
  hence~$\phi \cdash[\Gamma] \psi$.
\end{proof}

\begin{theorem}\label{thm:alg-sem}
  We have $\phi \vdash_{\Gamma} \psi$ if and only if $\phi \cdash[\Gamma] \psi$.
\end{theorem}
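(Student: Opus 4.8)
The plan is to prove the two directions separately, treating $\phi \vdash_{\Gamma} \psi \Rightarrow \phi \cdash[\Gamma] \psi$ as soundness and the converse as completeness via the Lindenbaum--Tarski algebra $\amo{L}_{\Gamma}$. Both directions are routine once we exploit the machinery already assembled above: the construction of $\amo{L}_{\Gamma}$, the identity $\llp \phi \rrp_{\amo{L}_{\Gamma}} = [\phi]$, and Lemma~\ref{lem:LT}.

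For soundness I would argue by induction on the derivation of $\phi \cp \psi$ in $\log{L}(\Gamma)$. Fix an arbitrary lattice model $\amo{A} = (A, \sigma)$ with $A \in \cat{Lat}(\Gamma)$; it suffices to show $\llp \phi \rrp_{\amo{A}} \leq_A \llp \psi \rrp_{\amo{A}}$ for each axiom, and that each rule preserves this property. The point is that every base axiom and rule of Definition~\ref{def:logic} is simply a defining law of a bounded lattice: \emph{top} and \emph{bottom} record that $\top_A$ and $\bot_A$ are the greatest and least elements; \emph{reflexivity} and \emph{transitivity} are reflexivity and transitivity of $\leq_A$; the \emph{conjunction rules} are exactly the greatest-lower-bound property of $\wedge_A$; and the \emph{disjunction rules} are the least-upper-bound property of $\vee_A$. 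The consequence pairs in $\Gamma$ hold because $A \in \cat{Lat}(\Gamma)$ validates them by definition. Hence every step preserves the semantic inequality, giving $\phi \cdash[\Gamma] \psi$.

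For completeness I would use the Lindenbaum--Tarski algebra directly. The key observation is that the lattice order on $L(\Gamma)$ coincides with derivability: $[\phi] \leq_L [\psi]$ iff $[\phi] \wedge_L [\psi] = [\phi]$ iff $\phi \wedge \psi \dashv\vdash_{\Gamma} \phi$, and one checks that this last condition is equivalent to $\phi \vdash_{\Gamma} \psi$ using reflexivity, the conjunction rules, and transitivity (from $\phi \vdash_{\Gamma} \psi$ and $\phi \vdash_{\Gamma} \phi$ one derives $\phi \vdash_{\Gamma} \phi \wedge \psi$, while $\phi \wedge \psi \vdash_{\Gamma} \phi$ always holds; conversely $\phi \vdash_{\Gamma} \phi \wedge \psi \vdash_{\Gamma} \psi$). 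Now assume $\phi \cdash[\Gamma] \psi$. By Lemma~\ref{lem:LT} this yields $\llp \phi \rrp_{\amo{L}_{\Gamma}} \leq_L \llp \psi \rrp_{\amo{L}_{\Gamma}}$, that is $[\phi] \leq_L [\psi]$, and the observation above delivers $\phi \vdash_{\Gamma} \psi$.

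I expect no genuine obstacle here; the argument is essentially bookkeeping. The only point requiring care is the completeness direction, where one must verify that the induced order $\leq_L$ on $L(\Gamma)$ is exactly $\vdash_{\Gamma}$ rather than merely bounding it. This is precisely where the interplay of the conjunction rules with reflexivity and transitivity is used, and it is the natural place for an error to slip in if the equivalence-class arithmetic is handled carelessly.
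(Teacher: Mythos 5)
Your proof is correct, and its overall architecture coincides with the paper's: both go through the Lindenbaum--Tarski model $\amo{L}_{\Gamma}$, Lemma~\ref{lem:LT}, and the syntactic characterisation of the order, namely $[\phi] \leq_L [\psi]$ iff $\phi \wedge \psi \dashv\vdash_{\Gamma} \phi$ iff $\phi \vdash_{\Gamma} \psi$, which you derive from reflexivity, transitivity and the conjunction rules exactly as the paper does. The one genuine divergence is how soundness is discharged. You run an explicit induction on derivations, checking that each axiom and rule of Definition~\ref{def:logic} is a lattice law and that the pairs in $\Gamma$ hold by membership in $\cat{Lat}(\Gamma)$. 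The paper instead extracts the direction $\phi \vdash_{\Gamma} \psi \Rightarrow \phi \cdash[\Gamma] \psi$ from the same equivalence chain: $\phi \vdash_{\Gamma} \psi$ gives $[\phi] \leq_L [\psi]$, and the right-to-left half of Lemma~\ref{lem:LT} transports this to every model via the homomorphism $i : \amo{L}_{\Gamma} \to \amo{A}$ with $i[\phi] = \llp \phi \rrp_{\amo{A}}$. It is worth noting that the well-definedness of $i$ on $\dashv\vdash_{\Gamma}$-classes already presupposes that provably equivalent formulae receive equal interpretations in every $A \in \cat{Lat}(\Gamma)$ --- which is precisely the content of your rule-by-rule induction; the paper dispenses with this in a parenthetical remark inside the proof of Lemma~\ref{lem:LT}. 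So your version makes explicit a step the paper leaves implicit, at the cost of some bookkeeping, while the paper's route is more economical by reusing the universal property of the Lindenbaum--Tarski algebra for both directions.
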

\begin{proof}
  By Lemma~\ref{lem:LT} it suffices to show that
  $\phi \vdash_{\Gamma} \psi$ if and only if
  $\llp \phi \rrp_{\amo{L}_{\Gamma}} \leq_L \llp \psi \rrp_{\amo{L}_{\Gamma}}$.
  It follows from the conjunction rules, reflexivity and transitivity that
  $\phi \vdash_{\Gamma} \psi$ if and only if
  $\phi \wedge \psi \dashv\vdash_{\Gamma} \phi$.
  Therefore we have $\phi \vdash_{\Gamma} \psi$ if and only if
  $[\phi \wedge \psi] = [\phi]$ in $\amo{L}_{\Gamma}$,
  and since $[\phi \wedge \psi] = [\phi] \wedge_L [\psi]$ this holds
  if and only if $[\phi] \leq_L [\psi]$.
  Recalling that $[\phi] = \llp \phi \rrp_{\amo{L}_{\Gamma}}$ completes the proof.
\end{proof}

\subsection{Frame Semantics}\label{subsec:l-frame}
  
  The collection of filters of a semilattice forms a lattice.
  Therefore we can use semilattices as frame semantics of weak positive logic,
  with filters serving as denotations of formulae.
  If moreover the semilattice is a lattice, then we can also use
  principal filters as denotations of formulae.

\begin{definition}\label{def:interpretation}
  A \emph{lattice model} or \emph{L-model} is a semilattice
  $(X, \ftop, \fmeet)$ together with a valuation
  $V : \Prop \to \fil(X, \ftop, \fmeet)$ which assigns to each proposition
  letter a filter of $(X, \ftop, \fmeet)$.
  An L-model $(X, \ftop, \fmeet, V)$ is called \emph{principal} if
  $(X, \ftop, \fmeet)$ has a bottom element $0$ and binary joins denoted by
  $\fjoin$ (so it forms a lattice)
  and $V(p)$ is a principal filter for all $p \in \Prop$.

  The interpretation of an $\lan{L}$-formula $\phi$ at a state $x$ in
  a (principal) L-model $\mo{M} = (X, \ftop, \fmeet, V)$ is defined recursively via
  \begin{align*}
    \mo{M}, x \Vdash \top &\phantom{\iff} \text{always} \\
    \mo{M}, x \Vdash \bot &\iff x = \ftop \\
    \mo{M}, x \Vdash p &\iff x \in V(p) \\
    \mo{M}, x \Vdash \phi \wedge \psi
      &\iff \mo{M}, x \Vdash \phi \text{ and } \mo{M}, x \Vdash \psi \\
    \mo{M}, x \Vdash \phi \vee \psi
      &\iff \exists y, z \in X \text{ s.t. } \mo{M}, y \Vdash \phi \text{ and }
            \mo{M}, z \Vdash \psi \text{ and } y \fmeet z \fleq x
  \end{align*}
  We write $\llb \phi \rrb^{\mo{M}} := \{ x \in X \mid \mo{M}, x \Vdash \phi \}$
  for the \emph{truth set} of $\phi$ in $\mo{M}$.
  If the underlying (semi)lattice is fixed and we want to emphasise the role of the valuation
  in the interpretation, we will write $V(\phi)$ instead of $\llb \phi \rrb^{\mo{M}}$.
  The \emph{theory} of $x$ is denoted by
  $\th_{\mo{M}}(x) := \{ \phi \in \lan{L} \mid \mo{M}, x \Vdash \phi \}$.
\end{definition}

  Note that the (semi)lattice underlying an L-model is uniquely determined
  by its partial order. So we may view L-models as a type of relational
  semantics, where the relation is used to define a non-standard interpretation
  of joins.
  When viewed as frame semantics, we denote the top element and meet of a
  semilattice by $\ftop$ and $\fmeet$ and call the semilattice itself an L-frame.

\begin{definition}
  We write $\mo{M}, x \Vdash \phi \cp \psi$ if $x \in \llb \phi \rrb^{\mo{M}}$
  implies $x \in \llb \psi \rrb^{\mo{M}}$, and
  $\mo{M} \Vdash \phi \cp \psi$ if
  $\llb \phi \rrb^{\mo{M}} \subseteq \llb \psi \rrb^{\mo{M}}$.
  If $\mo{X}$ is an L-frame, we let $\mo{X}, x \Vdash \phi \cp \psi$ if
  $\mo{M}, x \Vdash \phi \cp \psi$ for all L-models $\mo{M}$ based on $\mo{X}$,
  and $\mo{X} \Vdash \phi \cp \psi$ if $\mo{X}, x \Vdash \phi \cp \psi$ for
  all states $x$ of $\mo{X}$.

  We say that $\mo{M}$ or $\mo{X}$ \emph{validates} $\phi \cp \psi$
  if $\mo{M} \Vdash \phi \cp \psi$ or $\mo{X} \Vdash \phi \cp \psi$, respectively.
  If $\Gamma$ is a set of consequence pairs, then we let
  $\cat{LFrm}(\Gamma)$ denote the full subcategory of $\cat{LFrm}$ whose
  objects validate all consequence pairs in $\Gamma$.
  We write $\phi \Vdash_{\Gamma} \psi$ if $\mo{X} \Vdash \phi \cp \psi$
  for all $\mo{X} \in \cat{LFrm}(\Gamma)$.
  If $\Gamma = \emptyset$ then we write $\phi \Vdash \psi$ instead of
  $\phi \Vdash_{\emptyset} \psi$.
\end{definition}
  
  For any L-frame $\mo{F} = (X, \leq)$, the collection
  $\mo{F}^* := \fil(X, \leq)$ forms a lattice, called the \emph{complex algebra}
  of $\mo{F}$. Since valuations of $\mo{F}$ correspond bijectively to
  assignments of $\mo{F}^*$, we can define the \emph{complex algebra}
  of an L-model $\mo{M} = (X, \leq, V)$ by 
  $\mo{M}^* = (\fil(X, \leq), V)$.
  A routine induction on the structure of $\phi$ then proves the following
  lemma.


\begin{lemma}\label{lem:complex-alg1}
  For every L-model $\mo{M}$ and $\lan{L}$-formula $\phi$ we have
  $\llb \phi \rrb^{\mo{M}} = \llp \phi \rrp_{\mo{M}^*}$.
\end{lemma}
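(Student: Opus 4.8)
The plan is to proceed by induction on the structure of the formula $\phi$, matching the recursive clauses defining the truth set $\llb \phi \rrb^{\mo{M}}$ against the recursive clauses defining the algebraic interpretation $\llp \phi \rrp_{\mo{M}^*}$ in the complex algebra $\mo{M}^* = (\fil(X, \leq), V)$. The key reference points are that the meet and join of the lattice $\fil(X, \leq)$ are given by $\cap$ and $\curlyvee$, and that its top and bottom are $X$ and $\emptyset$. An incidental benefit of proving the equality is that, since the right-hand side lives in $\fil(X, \leq)$ by construction, each truth set $\llb \phi \rrb^{\mo{M}}$ is automatically a filter.

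The base cases are immediate: $\llb p \rrb^{\mo{M}} = V(p) = \llp p \rrp_{\mo{M}^*}$ by definition of the valuation, while $\llb \top \rrb^{\mo{M}} = X$ and $\llb \bot \rrb^{\mo{M}} = \emptyset$ are the top and bottom of $\fil(X, \leq)$. For the conjunction step, the semantic clause gives $\llb \phi \wedge \psi \rrb^{\mo{M}} = \llb \phi \rrb^{\mo{M}} \cap \llb \psi \rrb^{\mo{M}}$, which by the induction hypothesis equals $\llp \phi \rrp_{\mo{M}^*} \cap \llp \psi \rrp_{\mo{M}^*} = \llp \phi \wedge \psi \rrp_{\mo{M}^*}$, as meet in the complex algebra is intersection.

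The disjunction step is where the non-standard interpretation of join enters, and is the one I would treat with care. Here I would show $\llb \phi \vee \psi \rrb^{\mo{M}} = \llb \phi \rrb^{\mo{M}} \curlyvee \llb \psi \rrb^{\mo{M}}$, after which the induction hypothesis and the fact that join in $\mo{M}^*$ is $\curlyvee$ finish the case. When both truth sets are non-empty, Lemma~\ref{lem:join} describes their join as ${\uparrow}\{ y \wedge z \mid y \in \llb \phi \rrb^{\mo{M}},\, z \in \llb \psi \rrb^{\mo{M}} \}$, which is exactly the set of states $x$ satisfying the existential disjunct $\exists y, z\ (y \Vdash \phi,\ z \Vdash \psi,\ y \wedge z \leq x)$. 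The first two disjuncts $x \Vdash \phi$ and $x \Vdash \psi$ are then absorbed: if $x \Vdash \phi$, then for any witness $z \in \llb \psi \rrb^{\mo{M}}$ we have $x \wedge z \leq x$, so $x$ already lies in the join.

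The main obstacle, and the only genuinely delicate point, is the bookkeeping around the empty filter. If, say, $\llb \phi \rrb^{\mo{M}} = \emptyset$, the existential disjunct becomes vacuous and the clause for $\vee$ collapses to $x \Vdash \psi$, so $\llb \phi \vee \psi \rrb^{\mo{M}} = \llb \psi \rrb^{\mo{M}}$; this agrees with $\emptyset \curlyvee \llb \psi \rrb^{\mo{M}} = \llb \psi \rrb^{\mo{M}}$, matching the convention recorded just after Lemma~\ref{lem:join}. Verifying these degenerate cases, and observing that the three-way disjunction in the semantics was tailored precisely so as to reproduce $\curlyvee$ uniformly across the empty and non-empty cases, is the crux; everything else is a routine unwinding of definitions.
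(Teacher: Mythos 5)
Your proof is correct and takes the same route as the paper: the paper dismisses this lemma as ``a routine induction on the structure of $\phi$,'' and your argument is exactly that induction with the details filled in. Your treatment of the disjunction case --- matching the three-way semantic clause against $\curlyvee$, absorbing the first two disjuncts when both truth sets are non-empty, and checking the degenerate case where one truth set is the empty filter --- is precisely the bookkeeping the paper leaves implicit.
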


  The next persistence result is similar to persistence in intuitionistic logic,
  except we require formulae to be interpreted as (principal) filters rather than upsets.

\begin{proposition}[Persistence]
  Let $\mo{M} = (X, \ftop, \fmeet, V)$ be a (principal) L-model.
  Then for each $\phi \in \lan{L}$ the truth set $\llb \phi \rrb^{\mo{M}}$
  of $\phi$ is a (principal) filter of $(X, \ftop, \fmeet)$.
\end{proposition}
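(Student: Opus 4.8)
The plan is to read this off immediately from Lemma~\ref{lem:complex-alg1}. That lemma identifies the truth set with the algebraic interpretation in the complex algebra, $\llb \phi \rrb^{\mo{M}} = \llp \phi \rrp_{\mo{M}^*}$, where $\mo{M}^* = (\fil(X, \leq), V)$. Since the carrier of $\mo{M}^*$ is by definition the lattice $\fil(X, \leq)$ of filters of $(X, \leq)$, the interpretation $\llp \phi \rrp_{\mo{M}^*}$ is by construction an element of $\fil(X, \leq)$, i.e.~a filter. Hence $\llb \phi \rrb^{\mo{M}}$ is a filter and there is nothing further to check. This is exactly the sense in which the statement is a corollary.

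If one prefers a self-contained argument, I would instead run directly the induction on $\phi$ that underlies Lemma~\ref{lem:complex-alg1}. The base cases are routine: $\llb p \rrb^{\mo{M}} = V(p)$ is a filter because the valuation $V$ takes values in $\fil(X, \leq)$, while $\llb \top \rrb^{\mo{M}} = X$ and $\llb \bot \rrb^{\mo{M}} = \emptyset$ are the top and bottom filters. For conjunction, $\llb \phi \wedge \psi \rrb^{\mo{M}} = \llb \phi \rrb^{\mo{M}} \cap \llb \psi \rrb^{\mo{M}}$ is the intersection of two filters (filters by the induction hypothesis), which is again a filter.

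The only clause carrying genuine content is disjunction, and this is where I expect any difficulty to sit. Unwinding Definition~\ref{def:interpretation}, a state $x$ satisfies $\phi \vee \psi$ precisely when $x \in \llb \phi \rrb^{\mo{M}}$, or $x \in \llb \psi \rrb^{\mo{M}}$, or there exist $y, z$ with $y \in \llb \phi \rrb^{\mo{M}}$, $z \in \llb \psi \rrb^{\mo{M}}$ and $y \wedge z \leq x$. Comparing with Lemma~\ref{lem:join}, this is exactly the join $\llb \phi \rrb^{\mo{M}} \curlyvee \llb \psi \rrb^{\mo{M}}$ computed in $\fil(X, \leq)$ — the first two disjuncts handling the degenerate boundary case in which one of the truth sets is empty, and the third giving the upward closure of the finite meets drawn from the two sets. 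Since joins in $\fil(X, \leq)$ are by construction the smallest filter containing the joinands, the disjunction case returns a filter, closing the induction. The crux is thus conceptual rather than computational: one must recognise the clause for $\vee$ as the \emph{filter-lattice} join and not a set-theoretic union. The non-distributive reading of disjunction via $\curlyvee$ is precisely what guarantees that truth sets stay filters.
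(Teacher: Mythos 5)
Your proposal is correct and matches the paper's proof exactly: the paper also derives Persistence as an immediate consequence of Lemma~\ref{lem:complex-alg1}, whose own proof is the routine induction you spell out (with the disjunction clause recognised as the join $\curlyvee$ in $\fil(X,\leq)$, empty-filter cases included). Nothing further is needed.
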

\begin{proof}
  The fact that $\llb \phi \rrb^{\mo{M}}$ is a filter for each $\phi$
  follows from Lemma~\ref{lem:complex-alg1}.
  Suppose $\mo{M}$ is principal.
  Then $\llb p \rrb^{\mo{M}}$ is principal by definition,
  as are $\llb \bot \rrb^{\mo{M}} = \uparr\top$
  and $\llb \top \rrb^{\mo{M}} = \uparr 0$.
  If $\phi = \phi_1 \wedge \phi_2$ or $\phi_1 \vee \phi_2$ then
  we proceed by induction.
  We may assume that
  $\llb \phi_1 \rrb^{\mo{M}} = {\uparrow}x_1$ and
  $\llb \phi_2 \rrb^{\mo{M}} = {\uparrow}x_2$, so that
  $\llb \phi_1 \wedge \phi_2 \rrb^{\mo{M}} = {\uparrow}(x_1 \fjoin x_2)$
  and $\llb \phi_1 \vee \phi_2 \rrb^{\mo{M}} = {\uparrow}(x_1 \fmeet x_2)$.
\end{proof}

\begin{theorem}[Soundness]\label{thm:soundness}
  If $\phi \vdash_{\Gamma} \psi$ then $\phi \Vdash_{\Gamma} \psi$.
\end{theorem}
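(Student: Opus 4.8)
The plan is to leverage the algebraic completeness result already established (Theorem~\ref{thm:alg-sem}) together with the bridge between frame semantics and algebraic semantics provided by the complex algebra construction (Lemma~\ref{lem:complex-alg1}). The key observation is that every L-frame gives rise, via its complex algebra, to a lattice in $\cat{Lat}$, so semantic validity over frames can be translated into algebraic validity over lattices. Since $\phi \vdash_{\Gamma} \psi$ is equivalent to $\phi \cdash[\Gamma] \psi$, it suffices to show that the latter implies $\phi \Vdash_{\Gamma} \psi$.

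First I would unpack the definitions. Suppose $\phi \vdash_{\Gamma} \psi$; by Theorem~\ref{thm:alg-sem} this gives $\phi \cdash[\Gamma] \psi$, meaning $\llp \phi \rrp_{\amo{A}} \leq_A \llp \psi \rrp_{\amo{A}}$ for every lattice model $\amo{A} = (A, \sigma)$ with $A \in \cat{Lat}(\Gamma)$. To conclude $\phi \Vdash_{\Gamma} \psi$, I must show $\mo{X} \Vdash \phi \cp \psi$ for every $\mo{X} \in \cat{LFrm}(\Gamma)$. Fix such an L-frame $\mo{X} = (X, \leq)$ and an arbitrary valuation $V$, yielding an L-model $\mo{M} = (X, \leq, V)$. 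By Lemma~\ref{lem:complex-alg1} we have $\llb \phi \rrb^{\mo{M}} = \llp \phi \rrp_{\mo{M}^*}$, where $\mo{M}^* = (\fil(X, \leq), V)$ is the complex algebra viewed as a lattice model.

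The crucial step is to verify that the complex algebra $\mo{X}^* = \fil(X, \leq)$ actually lies in $\cat{Lat}(\Gamma)$, i.e.\ that it validates every consequence pair in $\Gamma$. This is exactly where the assumption $\mo{X} \in \cat{LFrm}(\Gamma)$ is needed: by definition $\mo{X}$ validates all pairs in $\Gamma$, which means that for \emph{every} valuation $V'$ and every state $x$, $x \in V'(\chi) $ implies $x \in V'(\chi')$ whenever $\chi \cp \chi' \in \Gamma$. Translating through Lemma~\ref{lem:complex-alg1}, this says precisely that $\llp \chi \rrp_{(\mo{X}^*, \sigma)} \leq \llp \chi' \rrp_{(\mo{X}^*, \sigma)}$ for every assignment $\sigma$ of $\mo{X}^*$, since valuations of $\mo{X}$ correspond bijectively to assignments of $\mo{X}^*$. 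Hence $\mo{X}^* \cdash \chi \cp \chi'$ for all $\chi \cp \chi' \in \Gamma$, so $\mo{X}^* \in \cat{Lat}(\Gamma)$.

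With this in hand the conclusion is immediate. Since $\mo{M}^*$ is a lattice model based on $\mo{X}^* \in \cat{Lat}(\Gamma)$, the algebraic entailment $\phi \cdash[\Gamma] \psi$ applies to give $\llp \phi \rrp_{\mo{M}^*} \leq \llp \psi \rrp_{\mo{M}^*}$, which by Lemma~\ref{lem:complex-alg1} and the fact that the order on $\fil(X, \leq)$ is inclusion translates back into $\llb \phi \rrb^{\mo{M}} \subseteq \llb \psi \rrb^{\mo{M}}$, i.e.\ $\mo{M} \Vdash \phi \cp \psi$. As $V$ was arbitrary, $\mo{X} \Vdash \phi \cp \psi$, and as $\mo{X} \in \cat{LFrm}(\Gamma)$ was arbitrary, $\phi \Vdash_{\Gamma} \psi$. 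The only genuine obstacle is the bookkeeping around the correspondence between frame validity and algebraic validity for the side conditions in $\Gamma$; the base case $\Gamma = \emptyset$ is a routine verification that $\fil(X, \leq)$ is a lattice (already known) combined with Lemma~\ref{lem:complex-alg1}.
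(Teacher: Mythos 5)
Your proposal is correct and takes essentially the same route as the paper's own proof: convert $\phi \vdash_{\Gamma} \psi$ into algebraic entailment via Theorem~\ref{thm:alg-sem}, then transfer it to frame validity through the complex algebra and Lemma~\ref{lem:complex-alg1}. If anything, you are slightly more careful than the paper about the frame-versus-model distinction, explicitly checking that $\mo{X}^* \in \cat{Lat}(\Gamma)$ via the bijection between valuations of $\mo{X}$ and assignments of $\mo{X}^*$, a step the paper phrases loosely at the level of a single model.
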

\begin{proof}
  If $\mo{M}$ is an L-model that validates all consequence pairs in $\Gamma$,
  then $\mo{M}^* \in \cat{Lat}(\Gamma)$.
  Since $\phi \vdash_{\Gamma} \psi$, Theorem~\ref{thm:alg-sem} yields
  $\phi \cdash[\Gamma] \psi$, and hence
  $\llp \phi \rrp_{\mo{M}^*} \leq \llp \psi \rrp_{\mo{M}^*}$.
  Lemma~\ref{lem:complex-alg1} now implies
  $\llb \phi \rrb^{\mo{M}} \leq \llb \psi \rrb^{\mo{M}}$,
  so that $\mo{M}$ validates $\phi \cp \psi$.
\end{proof}

  We turn the collections of (principal) L-models into a category by
  equipping with truth-preserving morphisms.

\begin{definition}
  An \emph{L-model morphism} from
  $(X, \ftop, \fmeet, V)$ to $(X', \ftop', \fmeet', V')$ is
  an L-morphism (Definition~\ref{def:L-mor})
  $f : (X, \ftop, \fmeet) \to (X', \ftop', \fmeet')$
  that satisfies $V = f^{-1} \circ V'$.
\end{definition}

  A routine induction on the structure of $\phi$ shows that
  L-model morphisms preserve and reflect truth of
  $\lan{L}$-formulae.

\begin{proposition}
  Let $f : \mo{M} \to \mo{M}'$ be an L-model morphism.
  Then for all states $x$ of $\mo{M}$ and all $\phi \in \lan{L}$,
  $$
    \mo{M}, x \Vdash \phi \iff \mo{M}', f(x) \Vdash \phi.
  $$
\end{proposition}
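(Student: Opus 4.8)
The plan is to establish the equivalent statement $\llb \phi \rrb^{\mo{M}} = f^{-1}(\llb \phi \rrb^{\mo{M}'})$ for all $\phi \in \lan{L}$, which unwinds to the claimed biconditional since $f(x) \in \llb \phi \rrb^{\mo{M}'}$ iff $x \in f^{-1}(\llb \phi \rrb^{\mo{M}'})$. I would prove this by induction on the structure of $\phi$. The base cases are immediate: $\top$ and $\bot$ hold, respectively fail, at every state, and for a proposition letter $p$ the defining condition $V = f^{-1} \circ V'$ of an L-model morphism gives $x \in V(p) \iff f(x) \in V'(p)$ directly.

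For the inductive step, the conjunction case follows routinely from the induction hypothesis together with the fact that an L-morphism is in particular a semilattice homomorphism, hence order-preserving and compatible with $\wedge$. The forward (preservation) direction of the disjunction case is equally cheap: if $\mo{M}, x \Vdash \phi \vee \psi$ is witnessed by $y, z$ with $y \wedge z \leq x$, then applying the induction hypothesis to $y$ and $z$ and using $f(y \wedge z) = f(y) \wedge f(z) \leq f(x)$ produces the witnesses $f(y), f(z)$ for $\mo{M}', f(x) \Vdash \phi \vee \psi$; the two degenerate disjuncts, where $x$ itself satisfies $\phi$ or $\psi$, are handled directly by the induction hypothesis. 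Note that only the semilattice-homomorphism structure of $f$ has been used so far.

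The crux, and the step I expect to be the main obstacle, is the backward (reflection) direction of the disjunction case, as this is precisely where the additional axiom of Definition~\ref{def:L-mor} is needed. Suppose $\mo{M}', f(x) \Vdash \phi \vee \psi$ is witnessed by $y', z'$ with $y' \wedge z' \leq f(x)$. The L-morphism condition then supplies $y, z \in X$ with $y' \leq f(y)$, $z' \leq f(z)$ and $y \wedge z \leq x$. Since truth sets are filters, hence upsets, by the Persistence proposition, the facts $y' \leq f(y)$ and $\mo{M}', y' \Vdash \phi$ yield $\mo{M}', f(y) \Vdash \phi$, whence $\mo{M}, y \Vdash \phi$ by the induction hypothesis; symmetrically $\mo{M}, z \Vdash \psi$. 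Together with $y \wedge z \leq x$ this witnesses $\mo{M}, x \Vdash \phi \vee \psi$. Alternatively, the entire statement can be packaged algebraically: by Proposition~\ref{prop:Lmor-latmor} the map $f^{-1}$ is a lattice homomorphism $(\mo{M}')^* \to \mo{M}^*$ intertwining the two assignments, so it commutes with formula interpretation, and Lemma~\ref{lem:complex-alg1} then delivers $\llb \phi \rrb^{\mo{M}} = f^{-1}(\llb \phi \rrb^{\mo{M}'})$ at once.
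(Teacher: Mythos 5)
Your proposal is correct and matches the paper's approach: the paper's proof is simply ``routine induction on the structure of $\phi$,'' and you carry out exactly that induction, correctly identifying that the only non-routine step is the reflection direction for $\vee$, where the extra clause of Definition~\ref{def:L-mor} plus persistence (truth sets are upsets) does the work. Your closing observation---that the whole statement also follows from Proposition~\ref{prop:Lmor-latmor} and Lemma~\ref{lem:complex-alg1} by noting that $f^{-1}$ is a lattice homomorphism intertwining the valuations---is a valid alternative packaging of the same induction at the level of complex algebras.
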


  The remainder of this subsection is devoted to examples of L-frames and -models.

\begin{example}\label{exm:Lfrm}
          Any linearly ordered set with a largest element is an L-frame.
          Filters in such frames are simply upsets.
          For example $\mathbb{N} \cup \{ \infty \}$ with the usual ordering
          is an L-frame which is prinicipal.
          The set $\mathbb{N}$ ordered by $\geq$ is also an L-frame, with top
          element $0$. It is not principal because it lacks a bottom element.  
\end{example}

\begin{example}
  As a special case of Example~\ref{exm:Lfrm},
  consider the collection $\fun{P}_{\omega}X$ of finite subsets of $X$.
  This forms a semilattice with top element $\emptyset$,
  and meet given by the set-theoretic union.
  Filters of $\fun{P}_{\omega}X$ correspond bijectively with subsets of $X$.
%
\end{example}

\begin{example}[Propositional team semantics]
  We briefly recall a simplified version of team semantics for propositional
  logics, underlying versions of modal dependence and independence
  logics such as the ones studied in \cite{HelEA14,KinEA14,Yang17,YV17}.
  Let $\lan{T}(\Prop)$ the language be given by the grammar
  $\phi ::= p \mid \neg p \mid \phi \wedge \phi \mid \phi \vee \phi$.
  Then $\lan{T}(\Prop)$-formulae can be interpreted in models
  consisting of a set $X$ and a valuation
  $\Pi : \Prop \to \fun{P}X$ of the proposition letters.
  However, rather than assigning truth of formulae to elements of $X$,
  truth is defined for \emph{subsets} of $X$ (the teams).
  Let $\mo{M} = (X, \Pi)$ be such a model and $T \subseteq X$ a team,
  then we let
  \begin{align*}
    \mo{M}, T \tVdash p &\iff T \subseteq V(p) \\
    \mo{M}, T \tVdash \neg p &\iff T \cap V(p) = \emptyset \\
    \mo{M}, T \tVdash \phi \wedge \psi &\iff \mo{M}, T \tVdash \phi \text{ and } \mo{M}, T \tVdash \psi \\
    \mo{M}, T \tVdash \phi \vee \psi
      &\iff \exists\, T_1, T_2 \subseteq T \text{ s.t. } T_1 \cup T_2 = T
            \text{ and } \mo{M}, T_1 \tVdash \phi
            \text{ and } \mo{M}, T_2 \tVdash \psi
  \end{align*}
  We can add $\top$, which is true for every team,
  and $\bot$ satisfying $\mo{M}, T \Vdash_t \bot$ iff $T = \emptyset$.
  
  This interpretation resembles
  Definition~\ref{def:interpretation}. Let us make this precise.
  For a set $\Prop$ of proposition letters, let
  ${\neg{\Prop}} = \{ \neg p \mid p \in \Prop \}$.
  Then, given a team model $\mo{M} = (X, \Pi)$, we can define a principal L-model
  $\mo{M}' = (\fun{P}X, \emptyset, \cup, V)$, with
  $V(p) = \{ a \in \fun{P}X \mid a \subseteq \Pi(p) \}$
  and $V(\neg p) = \{ a \in \fun{P}X \mid a \cap \Pi(p) = \emptyset \}$.
  Then for each team model $\mo{M}$,
  team $T$, and formula $\phi \in \lan{T}(\Prop)$ we have
  $$
    \mo{M}, T \tVdash \phi \iff \mo{M}', T \Vdash \phi. 
  $$
  This can be proven by induction on the structure of $\phi$.
  The only non-trivial step is for joins:
  \begin{align*}
  \mo{M}, t \Vdash_t \phi \vee \psi
      &\iff \exists T_1, T_2 \in \fun{P}X \text{ s.t. } T_1 \cup T_2 = T
            \text{ and } \mo{M}, T_1 \Vdash_t \phi \text{ and } \mo{M}, T_2 \Vdash_t \psi \\
      &\iff \exists T_1, T_2 \in \fun{P}X \text{ s.t. } T_1 \cup T_2 = T
            \text{ and } \mo{M}, T_1 \Vdash \phi \text{ and } \mo{M}, T_2 \Vdash \psi \\
      &\iff \exists T_1', T_2' \in \fun{P}X \text{ s.t. } T_1' \cup T_2' \supseteq T
            \text{ and } \mo{M}, T_1' \Vdash \phi \text{ and } \mo{M}, T_2' \Vdash \psi \\
      &\iff \mo{M}, T \Vdash \phi \vee \psi
  \end{align*}
  The first ``iff'' is the definition of $\Vdash_t$, the second follows from
  the induction hypothesis. The third ``iff'' follows from persistence and the
  fact that the frame is ordered by reverse inclusion, and the last ``iff''
  hold by the definition of $\Vdash$. \qedhere
%
\end{example}

\begin{example}[Modal information logic]
  Modal information logic~\cite{vB22} is the extension of propositional classical
  logic with two binary modal operators $\minf$ and $\msup$.
  These are interpreted in Kripke models $\mo{M} = (X, R, V)$ where
  $R$ is a pre-order on $X$ as follows:
  \begin{align*}
    \mo{M}, x \Vdash \minf (\phi, \psi) &\iff \exists y, z \in X
              \text{ s.t. } x = \inf(y, z)
              \text{ and } \mo{M}, y \Vdash \phi
              \text{ and } \mo{M}, z \Vdash \psi \\
    \mo{M}, x \Vdash \msup (\phi, \psi) &\iff \exists y, z \in X
              \text{ s.t. } x = \sup(y, z)
              \text{ and } \mo{M}, y \Vdash \phi
              \text{ and } \mo{M}, z \Vdash \psi
  \end{align*}
  Note that we need not require that every pair of states has an infimum
  and a supremum, nor that it is unique.
  The definition simply uses the fact that they might
  exist. Observe that we can recover the usual modal and temporal diamonds via
  $\Diamond\phi = \minf (\phi, \top)$ and
  $\bdiamond\phi = \msup (\phi, \top)$.
  
  Clearly, every L-model is a model for modal information logic.
  Interestingly, the interpretation of $\minf$ is closely aligned
  to our interpretation of joins; the only difference is that the
  infimum is allowed to be \emph{below} the state under consideration.
  Taking this into account, our interpretation of joins in an L-model
  $\mo{M} = (X, \ftop, \fmeet, V)$ coincides with
  $$
    \phi \vee \psi = \bdiamond(\minf(\phi, \psi)),
  $$
  where $\vee$ is the non-classical join of weak positive logic.
\end{example}

\subsection{Descriptive Frames and Completeness}\label{subsec:complete}

  We have already seen a duality for lattices by means of L-spaces.
  Since every L-space is based on a complete semilattice,
  L-spaces can be viewed as topologised (principal) L-frames.
  In this subsection we define clopen valuations for L-spaces and
  show how this gives rise to completeness results.
  We denote L-spaces and L-spaces with a valuation by $\topo{X}$ and $\topo{M}$.
  If $\topo{X}$ is an L-space, then we write $\kappa\topo{X}$ for its
  underlying (principal) L-frame.

\begin{definition}\label{def:inerpretation-lspace}
  A \emph{clopen valuation} for an L-space $\topo{X}$ is an assignment
  $V : \Prop \to \cfil\topo{X}$, which assigns to each proposition letter
  a clopen filter of $\topo{X}$. We call a pair $\topo{M} = (\topo{X}, V)$
  of an L-space and a clopen valuation an \emph{L-space model}.
  The interpretation $\llb \phi \rrb^{\topo{M}}$ of an $\lan{L}$-formula
  $\phi$ in an L-space model $\topo{M} = (\topo{X}, V)$ is defined as in
  the underlying L-model $(\kappa\topo{X}, V)$.

  An L-space model $\topo{M}$ validates a consequence pair $\phi \cp \psi$
  if $\llb \phi \rrb^{\topo{M}} \subseteq \llb \psi \rrb^{\topo{M}}$,
  notation: $\topo{M} \Vdash \phi \cp \psi$.
  We say that an L-space $\topo{X}$ validates $\phi \cp \psi$ if
  every L-space model based on it validates $\phi \cp \psi$.
  Finally, we write $\phi \Vdash_{\cat{LSpace}} \psi$ if every L-space
  validates $\phi \cp \psi$.
\end{definition}

\begin{lemma}\label{lem:Lspace-lattice-val}
  Let $\topo{X}$ be an L-space, $A$ its dual lattice, and $\phi, \psi \in \lan{L}$.
  Then
  $$
    \topo{X} \Vdash \phi \text{ iff } A \cdash \phi
    \quad\text{and}\quad
    \topo{X} \Vdash \phi \cp \psi \text{ iff } A \cdash \phi \cp \psi.
  $$
\end{lemma}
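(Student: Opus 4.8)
The plan is to reduce both biconditionals to a single truth lemma, proved by induction on $\phi$, exploiting that the dual lattice $A$ is literally the lattice $\cfil\topo{X}$ of clopen filters. First I would set up the bookkeeping: by Theorem~\ref{thm:duality-lat-lspace} we may take $A = \cfil\topo{X}$, so a clopen valuation $V : \Prop \to \cfil\topo{X}$ is nothing but an assignment $\sigma_V : \Prop \to A$, and $V \mapsto \sigma_V$ is a bijection between clopen valuations on $\topo{X}$ and assignments into $A$. Crucially, the order of $A$ is inclusion, its meet is $\cap$, and its join is $\curlyvee$, since in an L-space the clopen filters are closed under $\curlyvee$.

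The heart of the argument is the claim that for every clopen valuation $V$, writing $\topo{M} = (\topo{X}, V)$, and every $\phi \in \lan{L}$,
\[
  \llb \phi \rrb^{\topo{M}} = \llp \phi \rrp_{(A, \sigma_V)};
\]
in particular $\llb\phi\rrb^{\topo{M}}$ is always a clopen filter. I would prove this by induction on $\phi$. The cases $p$, $\top$, $\bot$ are immediate, and the $\wedge$-case follows because the L-model interpretation of $\wedge$ is $\cap$, matching $\wedge_A$. The only case needing care is $\vee$: unwinding Definition~\ref{def:interpretation} and applying Lemma~\ref{lem:join} gives $\llb \phi \vee \psi \rrb^{\topo{M}} = {\uparrow}\{ y \wedge z \mid y \in \llb\phi\rrb^{\topo{M}},\ z \in \llb\psi\rrb^{\topo{M}} \} = \llb\phi\rrb^{\topo{M}} \curlyvee \llb\psi\rrb^{\topo{M}}$; by the induction hypothesis both arguments are clopen filters, so since $\topo{X}$ is an L-space their $\curlyvee$ is again clopen and equals $\llp\phi\rrp_{(A,\sigma_V)} \vee_A \llp\psi\rrp_{(A,\sigma_V)}$. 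Equivalently, this is Lemma~\ref{lem:complex-alg1} for the complex algebra $\fil(\kappa\topo{X})$ together with the observation that $\cfil\topo{X}$ sits inside $\fil(\kappa\topo{X})$ as a sublattice (same $\cap$, same $\curlyvee$).

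With the truth lemma in hand, part~(2) is immediate: $\topo{X} \Vdash \phi \cp \psi$ means $\llb\phi\rrb^{\topo{M}} \subseteq \llb\psi\rrb^{\topo{M}}$ for every clopen valuation $V$, which by the truth lemma and the bijection $V \leftrightarrow \sigma_V$ says exactly that $\llp\phi\rrp_{(A,\sigma)} \leq_A \llp\psi\rrp_{(A,\sigma)}$ for every assignment $\sigma$, i.e.\ $A \cdash \phi \cp \psi$. Part~(1) is then the instance of part~(2) given by the pair $\top \cp \phi$, since $\topo{X} \Vdash \phi$ abbreviates validity of $\top \cp \phi$ and $A \cdash \phi$ abbreviates $A \cdash \top \cp \phi$. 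The main (and essentially only) obstacle is the $\vee$-case of the induction: one must check that the relational, ``infimum-below'' reading of disjunction reproduces the lattice join $\curlyvee$, and simultaneously that the L-space axiom is precisely what keeps this join clopen, so the whole computation stays inside $A$.
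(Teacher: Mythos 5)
Your proof is correct and takes essentially the same route as the paper: the paper's (much terser) proof is exactly your two ingredients — the bijection between clopen valuations of $\topo{X}$ and assignments into $A$, plus a routine truth-lemma induction whose only nontrivial case is $\vee$, resting on closure of clopen filters under $\curlyvee$ — the sole cosmetic difference being that the paper derives item (2) from item (1), while you derive (1) from (2) via the pair $\top \cp \phi$. (One pedantic point: Lemma~\ref{lem:join} describes $\curlyvee$ only for non-empty filters, so the case where $\llb\phi\rrb^{\topo{M}}$ or $\llb\psi\rrb^{\topo{M}}$ is empty should be noted separately, but it is immediate since $a \curlyvee \emptyset = a$.)
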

\begin{proof}
  The first ``iff'' follows from the fact that clopen valuations of $\topo{X}$
  correspond bijectively to assignments of the proposition letters for $A$,
  together with a routine induction on the structure of $\phi$.
  The second ``iff'' follows immediately from the first.
\end{proof}

\begin{remark}\label{rem:descriptive}
  We can alternatively describe L-spaces as \emph{descriptive L-frames}.
  This is similar to the perspective of Esakia spaces as
  descriptive intuitionistic Kripke frames, see~\cite[Chapter~3]{Esakia-book85} and \cite[Section~8]{CZ97}.
  We briefly sketch this alternative perspective.
  
  A \emph{general L-frame} is a tuple $(X, \ftop, \fmeet, A)$ such that
  $(X, \ftop, \fmeet)$ is an L-frame and $A$ is a collection of filters of
  $(X, \ftop, \fmeet)$ containing $X$ and $\emptyset$, and closed under
  $\cap$ and $\genFil$.
  Let $-A = \{ X \setminus a \mid a \in A \}$.
  A \emph{descriptive L-frame} is a general L-frame $(X, \ftop, \fmeet, A)$ that is
  \begin{itemize}
    \item \emph{refined}: for all $x, y \in X$ such that $x \not\fleq y$
          there exists an $a \in A$ such that $x \in a$ and $y \notin a$;
    \item \emph{compact}: if $C \subseteq A \cup -A$ has the finite
          intersection property then $\bigcap C \neq \emptyset$.
  \end{itemize}
  A general L-morphism from $(X, \ftop, \fmeet, A)$ to $(X', \ftop', \fmeet', A')$ is an L-morphism
  $f : (X, \ftop, \fmeet) \to (X', \ftop', \fmeet')$ such that $f^{-1}(a') \in A$ for all $a' \in A'$.
  Write $\cat{D\hyphen LFrm}$ for the category
  of descriptive L-frames and general L-morphisms.
  Then we have
  $
    \cat{D\hyphen LFrm} \cong \cat{LSpace}.
  $
\end{remark}


  Next, we use the notion of $\Pi_1$-preservation to prove a general
  completeness result.

\begin{definition}
  A consequence pair $\phi \cp \psi$ is called \emph{$\Pi_1$-persistent} if
  for every L-space $\topo{X}$,
  $$
    \topo{X} \Vdash \phi \cp \psi
    \quad\text{implies}\quad
    \kappa\topo{X} \Vdash \phi \cp \psi.
  $$
\end{definition}
  
  It is well known that filter and ideal completions preserve all
  (in)equalities (see e.g.~\cite{BirSack}).
  Combining this with Lemmas~\ref{lem:complex-alg1}
  and~\ref{lem:Lspace-lattice-val} we find:
  
\begin{lemma}\label{lem:canonicity}
  Any consequence pair $\psi \cp \chi$ of $\lan{L}$-formulae is
  $\Pi_1$-persistent.
\end{lemma}

\begin{theorem}\label{thm:compl}
  Let $\Gamma$ be a set of consequence pairs.
  Then the logic $\log{L}(\Gamma)$ is sound and complete with respect to the
  following classes of frames:
  \begin{itemize}
    \item $\cat{D\hyphen LFrm}(\Gamma)$ $($descriptive frames validating $\Gamma)$;
    \item $\cat{PLFrm}(\Gamma)$ $($principal L-frames validating $\Gamma)$;
    \item $\cat{LFrm}(\Gamma)$ $($L-frames validating $\Gamma)$.
  \end{itemize}
\end{theorem}
\begin{proof}
  Soundness holds by definition, so we prove completeness.
  Suppose $\phi \not\vdash_{\Gamma} \psi$.
  Then by Theorem~\ref{thm:alg-sem} we can find a lattice $A$
  validating all consequence pairs in $\Gamma$, but not $\phi \cp \psi$.
  As a consequence of Lemma~\ref{lem:Lspace-lattice-val} the L-space
  $\topo{X}$ dual to $A$ validates all consequence pairs in $\Gamma$
  but does not validate $\phi \cp \psi$, thus we find completeness
  with respect to $\cat{D\hyphen LFrm}(\Gamma)$.
  
  Since $\topo{X} \not\Vdash \phi \cp \psi$
  there must exist a clopen valuation $V$ such that
  $(\topo{X}, V) \not\Vdash \phi \cp \psi$.
  Therefore $(\kappa\topo{X}, V) \not\Vdash \phi \cp \psi$.
  Besides, Lemma~\ref{lem:canonicity} implies that
  $\kappa\topo{X}$ validates all consequence pairs in $\Gamma$.
  This implies completeness with respect to $\cat{LFrm}(\Gamma)$.
  Lastly, we note that $\topo{X}$ is a principal L-frame and
  since $V$ is clopen is assigns to each proposition letter a principal filter.
  Thus $\topo{X}$ is a principal L-frame validating $\Gamma$ but not
  $\phi \cp \psi$, proving completeness with respect to
  $\cat{PLFrm}(\Gamma)$.
\end{proof}

\begin{remark}
  Another way to prove Theorem~\ref{thm:compl} is via a Sahlqvist-style
  argument and the correspondence proved in Section~\ref{subsec:corr}.
  This resembles the approach taken by Sambin and Vaccaro~\cite{SamVac89}.
  For the detailed order-topological proof we refer to  \cite[Section 4.2]{Dmi21}. 
  The basic idea is as follows: if a sequent is refuted on some $L$-frame,
  then it is refuted on this frame via a minimal valuation which is closed.
  An analogue of the so-called intersection lemma entails that the value of
  a positive formula on a closed valuation is the intersection of the values
  of this formula on a clopen valuation. This produces a clopen valuation
  refuting the sequent.
\end{remark}

\begin{remark}\label{rem:can}
  The $\Pi_1$-persistence discussed here allows us to move from valuations that interpret proposition letters as clopen filters to valuations that assign to each proposition letter an arbitrary filter. It is analogous to $d$-persistence in intuitionistic and modal
  logics~\cite{BRV01,CZ97}
In the classical setting $d$-persistence allows one to move from clopen valuations to arbitrary valuations, and in the intuitionistic case from valuations into clopen upsets to valuations into 
all upsets. We point out that, while in the distributive setting this corresponds algebraically to canonical extensions, in our setting the corresponding algebraic structure is the $\Pi_1$-completion.
\end{remark}

%

\subsection{The First-Order Translation and Sahlqvist Correspondence}\label{subsec:corr}

  In this section we define the standard translation of $\lan{L}$ into a suitable
  first-order logic. We use this to derive a Sahlqvist correspondence
  result. We prove that for every consequence pair $\psi \cp \chi$,
  the collection of L-frames validate $\psi \cp \chi$ are first-order definable.
  Our proof of the correspondence result follows a standard proof from normal
  modal logic, such as found in~\cite[Section 3.6]{BRV01}.
  Thus, it showcases how our duality for lattices allows us to
  transfer classical techniques to the positive non-distributive setting.
  However, it is complicated (or rather, made more interesting) by the
  non-standard interpretation of disjunctions.

\begin{definition}
  Let $\FOL$ be the single-sorted first-order language which has a
  unary predicate $P_p$ for every proposition letter $p$,
  and a binary relation symbol $R$.
\end{definition}

  Intuitively, the relation symbol of our first-order language accounts
  of the poset structure of L-frames. It is used in the translation of
  disjunctions.

  If $x, y$ and $z$ are first-order variables, then we can express that
  $x$ is above every lower bound of $y$ and $z$ in the ordering induced by the
  relation symbol $R$ using a first-order sentence.
  In order to streamline notation we abbreviate this as follows:
  $$
    \abovemeet(x; y, z) := \forall w ((wRy \wedge wRz) \to wRx).
  $$
  If $x, y_1, \ldots, y_n$ is a finite set of variables and $n \geq 1$ then we
  define $\abovemeet(x; y_1, \ldots, y_n)$ in the obvious way.
  In particular, $\abovemeet(x; y)$ is simply $x R y$.
  
  We are now ready to define the standard translation.

\begin{definition}\label{def:st}
  Let $x$ be a first-order variable.
  Define the standard translation $\st_x : \lan{L} \to \FOL$ recursively
  via
  \begin{align*}
    \st_x(p) &= P_px \\
    \st_x(\top) &= (x = x) \\
    \st_x(\bot) &= \forall y(y R x) \\
    \st_x(\phi \wedge \psi) &= \st_x(\phi) \wedge \st_x(\psi) \\
    \st_x(\phi \vee \psi)
      &= \exists y \exists z (\abovemeet(x; y, z)
         \wedge \st_y(\phi) \wedge \st_z(\psi))
  \intertext{Furthermore, we define the standard translation of a consequence pair
  $\phi \cp \psi$ as}
    \st_x(\phi \cp \psi) &= \st_x(\phi) \to \st_x(\psi).
  \end{align*}
\end{definition}

  Every L-model $\mo{M} = (X, \leq, V)$ gives rise to a first-order structure
  for $\FOL$: $\leq$ accounts for the interpretation of the
  binary relation symbol, and the interpretation of the unary
  predicates is given via the valuations of the proposition letters.
  We write $\mo{M}^{\circ}$ for the L-model $\mo{M}$ conceived of as
  a first-order structure for $\FOL$.

\begin{proposition}\label{prop:model-corr1}
  For every L-model $\mo{M}$ and state $w$ of $\mo{M}$ we have
  \begin{enumerate}
    \item $\mo{M}, w \Vdash \phi$ iff $\mo{M}^{\circ} \models \st_x(\phi)[w]$;
    \item $\mo{M}, w \Vdash \phi \cp \psi$
          iff $\mo{M}^{\circ} \models \st_x(\phi \cp \psi)[w]$.
  \end{enumerate}
\end{proposition}
\begin{proof}
  The first item follows immediately from the definition of the standard
  translation, and the other item follows from the first one.
\end{proof}

  In order to obtain similar results as in Proposition~\ref{prop:model-corr1}
  for frames, we need to quantify the unary predicates in $\FOL$ corresponding
  to the proposition letters. We can do so in a second-order language, say,
  $\SOL$. However, getting a second-order correspondent for a consequence
  pair $\phi \cp \psi$ that is satisfied in a frame if and only if
  $\phi \cp \psi$ is, is not as easy as simply quantifying over all
  possible interpretations of the unary predicates.
  That is, we cannot simply add $\forall P_1 \cdots \forall P_n$ in front
  of $\st_x(\phi \cp \psi)$. Indeed, we wish to only take those interpretations
  into account that arise from a valuation of the proposition letters
  \emph{as filters}.

  Thus we wish to quantify over interpretations of the unary predicates
  corresponding to filters in the underlying frame.
  We can force this by adding conditions that ensure that the $P$'s are
  interpreted as filters in the antecedent of the implication
  $\st_x(\phi) \to \st_x(\psi)$. Then the implication is vacuously true
  for ``illegal'' interpretations of the unary predicates.
  This intuition motivates the following definition of the
  \emph{second-order translation} of a consequence pair.

\begin{definition}\label{def:fil-sot}
  Let $p_1, \ldots, p_n$ be the proposition letters occurring in $\psi$
  and $\chi$, and let $P_1, \ldots, P_n$ denote their corresponding unary
  predicates.
  For each $P_i$, abbreviate
  \begin{equation*}
    \isfil(P_i)
      = \exists w P_iw \wedge \forall x \forall y \forall z(
          (P_iy \wedge P_iz \wedge \abovemeet(x; y, z)) \to P_ix).
  \end{equation*}
  Using this abbreviation, we define the \emph{second order translation}
  of a consequence pair $\psi \cp \chi$ by
  \begin{equation}\label{eq:sot}
    \so(\psi \cp \chi)
      = \forall P_1 \cdots \forall P_n \big(
        (\isfil(P_1) \wedge \cdots \wedge \isfil(P_n) \wedge \st_x(\psi))
        \to \st_x(\chi) \big).
  \end{equation}
\end{definition}

  To disburden notation, we will often abbreviate
  $\isfil(P_1) \wedge \cdots \wedge \isfil(P_n)$ as $\ISFIL$.

  Since all unary predicates in $\so(\psi \cp \chi)$ are in the scope of a quantifier,
  the formula $\so(\psi \cp \chi)$ can be interpreted in a first-order structure with a single relation.
  Therefore, every L-model $\mo{X}$ gives rise to a structure
  $\mo{X}^{\circ}$ for $\SOL$ in which we can interpret second order
  translations.

\begin{lemma}\label{lem:sot}
  For all L-frames $\mo{X} = (X, \ftop, \fmeet)$ and all consequence pairs
  $\psi \cp \chi$ we have
  $$
    \mo{X}, w \Vdash \psi \cp \chi
      \iff \mo{X}^{\circ} \models \so(\psi \cp \chi)[w].
  $$
\end{lemma}
\begin{proof}
  If $\mo{X}, w \Vdash \psi \cp \chi$ then $w \in V(\psi)$
  implies $w \in V(\chi)$
  for every valuation $V$ for $\mo{X}$.
  If any of the $P_i$ is inter\-preted as a subset of $X$ that is not
  a filter, then the implication inside the quantifiers in \eqref{eq:sot}
  is automatically true, because the
  antecedent is false. If all $P_i$ are interpreted as filters,
  then the implication holds because of the assumption.
  The converse is similar.
\end{proof}

  Next, we show how one can use the second-order translation to obtain
  local correspondence results.
  We first define what we mean by local correspondence.

\begin{definition}
  Let $\phi \cp \psi$ be a consequence pair and $\alpha(x)$ a first-order
  formula with free variable $x$. Then we say that $\phi \cp \psi$ and
  $\alpha(x)$ are \emph{local frame correspondents} if for any L-frame
  $\mo{X}$ and any state $w$ we have
  $$
    \mo{X}, w \Vdash \phi \cp \psi
      \iff \mo{X} \models \alpha(x)[w].
  $$
\end{definition}

  Since our language is positive, every formula is upward monotone.
  That is, extending the valuation increases the truth set of formulae.
  
\begin{lemma}\label{lem:increasing-val}
  Let $\mo{X}$ be an L-frame and let $V$ and $V'$ be valuations for $\mo{X}$
  such that $V(p) \subseteq V'(p)$ for all $p \in \Prop$.
  Then for all $\phi \in \lan{L}$ we have $V(\phi) \subseteq V'(\phi)$.
\end{lemma}

  We now prove that every consequence pair has a local correspondent.

\begin{theorem}\label{thm:corr}
  Any consequence pair $\psi \cp \chi$ of $\lan{L}$-formulae
  locally corresponds to a first-order formula with one free variable.
\end{theorem}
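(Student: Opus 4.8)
The plan is to adapt the minimal-valuation (Sahlqvist--van Benthem) substitution method of \cite[Section~3.6]{BRV01} to the positive setting, the key point being that in our language the only ``modal-like'' connective is $\vee$, whose non-standard semantics translates into an \emph{existential positive} first-order condition rather than a genuinely problematic disjunction. Since the cases $\psi \in \{\top, \bot\}$ have already been dealt with, fix a consequence pair $\psi \cp \chi$ and observe first that, by inspection of Definition~\ref{def:st}, every predicate $P_i$ occurs only positively in $\st_x(\psi)$ and in $\st_x(\chi)$: the connectives $\wedge, \vee, \exists$ are the only ones applied to $P$-atoms, while the $R$-atoms sit inside the blocks $\abovemeet(\cdots)$, which contain no $P_i$ at all. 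Hence, treating each block $\abovemeet(u; \bar v)$ and each atom $P_i t$ as atomic, $\st_x(\psi)$ is existential positive in the $P_i$ and is therefore equivalent to a finite disjunction of formulas of the shape
$$
  D_j \;=\; \exists \bar y\,\Big( \delta_j(\bar y, x) \wedge \textstyle\bigwedge_k P_{i_k}(t_k) \Big),
$$
where $\delta_j$ is a conjunction of $\abovemeet$-blocks (a pure $R$-condition) and each $t_k$ is a variable among $x, \bar y$.

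The heart of the argument is that each disjunct $D_j$ admits a first-order definable \emph{minimal filter-valuation}. Fixing $j$ and witnesses $\bar y$ satisfying $\delta_j$, the requirement ``$t_k \in V(p_{i_k})$ for all $k$'' is met by the least filter-valuation $V_j^{\bar y}$ sending each $p_i$ to the smallest filter ${\uparrow}\big(\bigwedge\{t_k \mid i_k = i\}\big)$ containing the demanded points (and to the empty filter when no point is demanded). The crucial observation is that membership in this minimal filter is expressible in $\FOL$: indeed $u \in {\uparrow}(t_{k_1} \wedge \cdots \wedge t_{k_m})$ holds precisely when $\abovemeet(u; t_{k_1}, \ldots, t_{k_m})$, since the latter says every lower bound of the $t_{k_i}$ lies below $u$, and $u \in \emptyset$ is simply false. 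Thus substituting $P_i(s) \mapsto \abovemeet(s; \bar t^{(i)})$ (and $P_i(s) \mapsto (s \neq s)$ when $p_i$ is unconstrained) into $\st_x(\chi)$ yields a first-order formula, which I denote $\st_x(\chi)^{\,j,\bar y}$, computing exactly $w \in V_j^{\bar y}(\chi)$.

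Putting the pieces together, $\mo{X}, w \Vdash \psi \cp \chi$ unfolds to: for every filter-valuation $V$, if some disjunct $D_j$ holds under $V$ then $w \in V(\chi)$; equivalently, for every $j$ and every $\bar y$ with $\delta_j(\bar y, w)$, every filter-valuation $V$ meeting the $P$-requirements of $D_j$ has $w \in V(\chi)$. By monotonicity of $\chi$ in the $P_i$ (Lemma~\ref{lem:increasing-val}) and minimality of $V_j^{\bar y}$, the inner universal over $V$ collapses to the single instance $V_j^{\bar y}$. Hence the desired local correspondent is
$$
  \alpha(x) \;=\; \bigwedge_j \forall \bar y\,\big( \delta_j(\bar y, x) \to \st_x(\chi)^{\,j,\bar y} \big),
$$
a first-order formula with the single free variable $x$, and one checks $\mo{X}, w \Vdash \psi \cp \chi$ iff $\mo{X} \models \alpha(x)[w]$.

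I expect the main obstacle to be largely bookkeeping: verifying that the existential-positive normal form is legitimate (so that each disjunct is genuinely conjunctive and hence has a \emph{single} minimal valuation) and that the minimal filter is correctly captured by $\abovemeet$ even when several points are demanded for the same predicate. Conceptually, the reason every pair corresponds --- in contrast to classical modal logic, where disjunctions in antecedents obstruct Sahlqvist correspondence --- is that the propositional fragment contains no universal ($\Box$-like) connective, so the only quantifier alternation arises from the diamond-like $\vee$, which is harmless for the minimal-valuation method.
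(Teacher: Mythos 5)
Your proposal is correct and follows essentially the same route as the paper's proof: both pre-process the second-order translation into a conjunction, over the disjuncts of the antecedent, of implications whose premises consist of $\abovemeet$-conditions and $P$-atoms, then substitute the minimal filter-valuation (the principal filter generated by the demanded points, expressed via $\abovemeet$, with the empty filter for unconstrained letters) into $\st_x(\chi)$, and justify the collapse of the second-order quantifiers by monotonicity (Lemma~\ref{lem:increasing-val}). The only difference is presentational: you package the paper's Steps 1A--1C as an existential-positive normal form, which is the same manipulation.
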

\begin{proof}
  We know that $\mo{X}, w \Vdash \psi \cp \chi$ if and only if
  $\mo{X}^{\circ} \models \so(\psi \cp \chi)[w]$.
  Our strategy for obtaining a first-order correspondent is to remove all
  second-order quantifiers from the second-order translation.
  We assume that 
  no two quantifiers bind the same variable.
  
  If $\psi$ is equivalent to $\top$ then as a consequence of Lemma~\ref{lem:increasing-val}
  $\psi \cp \chi$ is equivalent
  to $\top \cp \chi'$, where $\chi'$ is obtained
  from $\chi$ by replacing all proposition letters with $\bot$.
  This, in turn, implies that $\so(\top \cp \chi')$ is a first-order
  correspondent of $\top \cp \chi$, since the lack of proposition
  letters in $\chi'$ implies that there are no second-order
  quantifiers in $\so(\top \cp \chi')$.
  If $\psi$ is equivalent to $\bot$ then $\psi \cp \chi$ is vacuously valid on all L-frames.
  So we may assume that the antecedent does not use $\top$ or $\bot$.
  
  Let $p_1, \ldots, p_n$ be the propositional variables occurring in
  $\psi$, and write $P_1, \ldots, P_n$ for their corresponding unary predicates.
  We assume that every proposition letter that occurs in $\chi$
  also occurs is $\psi$, for otherwise we may replace it by $\bot$ to
  obtain a formula which is equivalent in terms of validity on frames.
  
  \bigskip\noindent
  {\it Step 1.}
%
  Use equivalences of the form
  $$
    (\exists w(\alpha(w)) \wedge \beta) \leftrightarrow \exists w(\alpha(w) \wedge \beta),
    \qquad
    (\exists w (\alpha(w)) \to \beta) \leftrightarrow\forall w(\alpha(w) \to \beta)
  $$
  to pull out all quantifiers that arise in $\st_x(\psi)$.
  Let $Y := \{ y_1, \ldots, y_m \}$ denote the set of (bound) variables that occur
  in the antecedent of the implication from the second-order translation.
  We end up with a formula of the form
  \begin{equation}\label{eq:post-process11}
    \forall P_1 \cdots \forall P_n \forall y_1 \cdots \forall y_m
    \big( (\ISFIL \wedge \AT \wedge \REL) \to \st_x(\chi) \big).
  \end{equation}
  Here $\ISFIL = \isfil(P_1) \wedge \cdots \wedge \isfil(P_n)$,
  $\AT$ is a conjunction of formulae of the form $P_iz$ and
  $\REL$ is a conjunction of formulae of the form $\abovemeet(z; z', z'')$,
  where $z, z', z'' \in Y \cup \{ x \}$.
  
  \bigskip\noindent
  {\it Step 2.} Next we read off minimal instances of the $P_i$ making the
  antecedent true. Intuitively, these correspond to the smallest valuations
  for the $p_i$ making the antecedent true.
  For each proposition letter $P_i$, let $P_iy_{i_1}, \ldots, P_iy_{i_k}$ be
  the occurrences of $P_i$ in $\AT$ in the antecedent of \eqref{eq:post-process11}.
  We define the valuation of $p_i$ to be the filter generated
  by the (interpretations of) $y_{i_1}, \ldots, y_{i_k}$.
  Formally,
  $$
    \sigma(P_i) := \lambda u. \abovemeet(u; y_{i_1}, \ldots, y_{i_k}).
  $$
  (If $k = 0$, i.e.~there is no variable $y$ with $P_iy$, then we let
  $\sigma(P_i) = \lambda u.(u \neq u)$.)
  Then for each L-model $\mo{M}$ and states $x', y_1', \ldots, y_m'$ in $\mo{M}$
  we have
  $$
    \mo{M}^{\circ} \models \AT \wedge \REL[x, y_1', \ldots, y_m']
    \quad\text{implies}\quad
    \mo{M}^{\circ} \models \forall y (\sigma(P_i)y \to P_iy).
  $$
  
  If we replace each unary predicate $P$ in \eqref{eq:post-process11} with
  $\sigma(P)$, then all conjoints in $\ISFIL$ and $\AT$ become true.
  Writing $[\sigma(P)/P]\st_x(\chi)$ for the formula obtained from $\st_x(\chi)$
  by replacing each instance of a unary predicate $P$ with $\sigma(P)$,
  we arrive at the first-order formula
  \begin{equation}\label{eq:Sahl-2}
    \forall y_1 \cdots \forall y_m \big(
      \REL
      \to [\sigma(P_i)/P_i]\st_x(\chi) \big)
  \end{equation}
  
  \bigskip\noindent
  {\it Step 3.}
  Finally, we claim that for every L-frame $\mo{X}$,
  $\mo{X}^{\circ}$ validates \eqref{eq:post-process11} if and only if
  it validates \eqref{eq:Sahl-2}.
  The implication from left to right is simply an instantiation of
  the quantifiers as filters.
  For the converse, assume that $\mo{M}$ is some model based on $\mo{X}$,
  so that $\mo{M}^{\circ}$ is an extension of $\mo{X}^{\circ}$ giving
  the interpretations of the unary predicates as filters. We may disregard
  the case where any of them is not a filter as that would make the antecedent
  in \eqref{eq:post-process11} false, hence the whole implication true.
  Let $x', y_1', \ldots, y_m'$ be states in $\mo{M}$
  and assume that
  \begin{equation}\label{eq:Sahl-3}
    \mo{M}^{\circ} \models \ISFIL \wedge \AT \wedge \REL[x', y_1', \ldots, y_m'].
  \end{equation}
  We need to show that $\mo{M}^{\circ} \models \st_x(\chi)[x', y_1', \cdots, y_m']$.
  It follows from the assumption that \eqref{eq:Sahl-2} holds that
  $\mo{M}^{\circ} \models [\sigma(P)/P]\st_x(\chi)[x', y_1', \cdots, y_m']$.
  Moreover, as a consequence of \eqref{eq:Sahl-3} we have
  $\mo{M}^{\circ} \models \forall y(\sigma(P)(y) \to Py)$ for all
  $P \in \{ P_1, \ldots, P_n \}$.
  Using Lemma~\ref{lem:increasing-val} it follows that
  $\mo{M}^{\circ} \models \st_x(\chi)[x', y_1', \cdots, y_m']$, as desired.  
\end{proof}

  Let us work through some explicit examples so we can see the proof of
  the theorem in action.
  Recall that $\abovemeet(x; y)$ is simply $xRy$.
  

\begin{example}
  Consider the formula $p \wedge (q \vee q') \cp (p \wedge q) \vee (p \wedge q')$.
  This corresponds to distributivity; the reverse consequence pair is always valid.
  We temporarily abbreviate $\chi := (p \wedge q) \vee (p \wedge q')$.
  The second-order translation of this formula is
  \begin{align*}
    \so({p \wedge (q \vee q')} \cp \chi)
      &= \forall P \forall Q \forall Q'
        \big( \big[ \isfil(P) \wedge \isfil(Q) \wedge \isfil(Q') \\
        &\wedge Px \wedge \exists y \exists y'(\abovemeet(x; y, y') \wedge Qy \wedge Q'y')\big]
        \to \st_x(\chi) \big)
  \end{align*}
  As per instructions, we rewrite this to
  \begin{equation}\label{eq:exm-distr}
       \forall P \forall Q \forall Q' \forall y \forall y' \big(
        (\ISFIL \wedge Px \wedge \abovemeet(x; y, y') \wedge Qy \wedge Q'y') 
         \to \st_x(\chi) \big).
  \end{equation}
  Next, we obtain
  $\sigma(P) = \lambda u. \abovemeet(u; x)$, which is simply $x R u$.
  Similarly we find $\sigma(Q) = \lambda u. y R u$
  and $\sigma(Q') = \lambda u. y' R u$.
  Plugging these into the antecedent yields
  \begin{equation*}
    [\sigma(P_i)/P_i](\st_x(\chi))
      = \exists z \exists z' \big(\abovemeet(x; z, z')
             \wedge (x R z) \wedge (y R z)
             \wedge (x R z') \wedge (y' R z')\big).
  \end{equation*}
  Thus we find the following first-order correspondent:
  \begin{align*}
    \forall x \forall y \forall y'
      \big( \abovemeet(x; y, y') \to
      \big[& \exists z \exists z'(\abovemeet(x; z, z')
         &&\hspace{-2.5em}\wedge (x R z) \wedge (y R z) \\
         &&&\hspace{-2.5em}\wedge (x R z') \wedge (y' R z'))\big]\big).
  \end{align*}
  Recall that the predicate $R$ is interpreted as the partial order $\fleq$
  underlying an L-frame.
  Furthermore, since $z \wedge z' R x$ and $x R z$ and $x R z'$,
  we find that $z \wedge z' = x$.
  Thus, a state $w$ in an L-frame $(X, \ftop, \fmeet)$ with partial order $\fleq$,
  satisfies distributivity if and only if
  $$
    \forall y \forall y' \big(
      (y \fmeet y' \fleq w) \to
       \exists z \exists z'((w = z \fmeet z') \wedge (y \fleq z) \wedge (y' \fleq z'))\big).
  $$
  Therefore, an L-frame validates distributivity if it is a distributive semilattice.
  Note that here $\fmeet$ and $\fleq$ are L-frame operators while
  the quantifiers and $\to, \wedge$ are connectives from the first-order
  language used to reason about L-frames.
\end{example}

\begin{example}
  Next consider the modularity axiom
  $$
    ((p_1 \wedge p_3) \vee p_2) \wedge p_3 
    \cp (p_1 \wedge p_3) \vee (p_2 \wedge p_3).
  $$
  Writing $\chi$ for the right hand side of the consequence pair,
  after applying Step 1 of the proof of Theorem~\ref{thm:corr} we have
  \begin{equation}\label{eq:exm-mod}
    \forall P_1 \forall P_2 \forall P_3 \forall y \forall y'
      \big((\ISFIL \wedge P_1y \wedge P_3y \wedge P_2y' \wedge P_3x \wedge
      \abovemeet(x; y, y')) \to \st_x(\chi) \big)
  \end{equation}
  We then get $\sigma(P_1) = \lambda u.y R u$,
  $\sigma(P_2) = \lambda u.y' R u$ and
  $\sigma(P_3) = \lambda u.\abovemeet(u; x, y)$.
  Substituting these and leaving out $\ISFIL$ and $\AT$ yields
  \begin{equation*}
  \begin{split}
    &\forall y \forall y' \big[ \big(
      \abovemeet(x; y, y')
             \wedge \abovemeet(y; x, y)
             \wedge \abovemeet(x; x, y) \big) \\
      &\to \exists z \exists z' (\abovemeet(x; z, z')
        \wedge (y R z) \wedge \abovemeet(z; x, y)
        \wedge (y' R z') \wedge \abovemeet(z'; x, y)) \big].
  \end{split}
  \end{equation*}
  Leaving out everything that is trivially true, this yields the following
  condition. A world $w$ in an L-frame $(X, \ftop, \fmeet)$ with partial
  order $\fleq$ satisfies the modularity axiom if and only if
  $$
    \forall y \forall y'
      ((y \fmeet y' \fleq w) \to \exists z \exists z'( (z \fmeet z' \fleq w)
        \wedge (y \fleq z)
        \wedge (y' \fleq z')
        \wedge (w \fmeet y \fleq z')).
  $$
  In yet other words, 
  $w$ satisfies modularity if for all $y, y' \in X$ such that
  $y \fmeet y' \fleq w$ we can find
  $z, z'$ above $y, y'$, respectively, such that
  $z \fmeet z' \fleq w$ and $w \fmeet y \fleq z'$.
\end{example}

\section{Normal Modal Extension}

  We investigate the extension of weak positive logic with two
  modal operators, $\Box$ and $\Diamond$, interpreted via a relation in the
  usual way (see e.g.~\cite[Definition~1.20]{BRV01}).
  As our point of departure we take L-frames with an additional relation.
  We stipulate sufficient conditions on the relation to ensure persistence,
  but we do not enforce any axioms on the modalities.
  It then turns out that $\Box$ preserves finite conjunctions,
  while, as a consequence of the non-standard interpretation of disjunctions,
  $\Diamond$ only satisfies monotonicity.
  This is reminiscent of the modal extension of intuitionistic logic investigated
  by Kojima~\cite{Koj12}.
  The interaction axioms relating $\Box$ and $\Diamond$ are closely aligned
  to Dunn's axioms for positive modal logic \cite{Dun95},
  see Remark~\ref{rem:Dunn}.
  
  After investigating the modal logic from a semantic point of view, we use
  our newly developed intuition to give syntactic definition of the logic
  and its algebraic semantics in Section~\ref{subsec:modal-lattice},
  and a duality in Section~\ref{subsec:modal-l-space}.
  We then extend the definition of the filter- and $\Pi_1$-completion
  to the modal setting and prove completeness for weak positive modal logic
  in Section~\ref{subsec:ext-modal-lat}.

  Finally, in Section~\ref{subsec:modal-corr} we extend the Sahlqvist
  correspondence result for weak positive logic to the modal setting.
  It is no longer the case that any consequence pair is Sahlqvist,
  and we identify as Sahlqvist consequence pairs
  precisely the negation-free Sahlqvist formulae from normal modal logic
  \cite[Definition~3.51]{BRV01}, where the implication is replaced by $\cp$.
  
  One may wonder whether it would be more natural to insist that $\Diamond$
  be normal as well. We do not because the additional conditions required
  to ensure that $\Diamond$ is normal complicate the presentation of the
  semantics and duality. Moreover, in order to make $\Diamond$ normal we
  only need to extend our basic system with the consequence pair
  $$
    \Diamond(p \vee q) \cp \Diamond p \vee \Diamond q,
  $$
  which are Sahlqvist!
  We compute its local correspondent in Example~\ref{exm:normal-diamond}.

\subsection{Relational Meet-Frames}

  Let $\lan{L}_{\Box\!\Diamond}$ be the language generated by the grammar
  $$
   \phi ::= p \mid \top \mid \bot \mid \phi \wedge \phi \mid \phi \vee \phi
              \mid \Box\phi \mid \Diamond\phi,
  $$
  where $p$ ranges over some set $\Prop$ of proposition letters.
  A \emph{modal consequence pair} is an expression of the form $\phi \cp \psi$,
  where $\phi, \psi \in \lan{L}_{\Box\!\Diamond}$.
  We derive an appropriate notion of modal L-frame, such that the truth set of
  each formula is guaranteed to be a filter.
  

\begin{definition}\label{def:modal-l-frame}
  A \emph{modal L-frame} is a tuple $(X, \ftop, \fmeet, R)$ where
  $(X, \ftop, \fmeet)$ is an L-frame with underlying partial order $\fleq$,
  and $R$ is a binary relation on $X$ such that:
  \begin{enumerate}
    \item \label{it:mlf-1}
          If $x \fleq y$ and $yRz$ then there exists a $w \in X$ such that
          $xRw$ and $w \fleq y$;
    \item \label{it:mlf-2}
          If $x \fleq y$ and $xRw$ then there exists a $z \in X$ such that
          $yRz$ and $w \fleq z$;
    \item \label{it:mlf-3}
          If $(x \fmeet y)Rz$ then there exist $v, w \in X$ such that
          $xRv$ and $yRw$ and $v \fmeet w \fleq z$;
    \item \label{it:mlf-4}
          If $xRv$ and $yRw$ then $(x \fmeet y)R(v \fmeet w)$;
    \item \label{it:mlf-6}
          For all $x$, $\ftop Rx$ if and only if $x = \ftop$.
  \end{enumerate}
  A \emph{modal L-model} is a a modal L-frame together with a valuation $V$
  that assigns to each proposition letter a filter of $(X, \ftop, \fmeet)$.
  
  Just like in the propositional case in Section~\ref{subsec:l-frame},
  we can identify a class of frames where formulae can be interpreted
  exclusively as \emph{principal} filters.
  To this end, we define a \emph{principal modal L-frame} as a
  modal L-frame $(X, \ftop, \fmeet, R)$ that additionally satisfies:
  \begin{enumerate}
    \setcounter{enumi}{-1}
    \item \label{it:pmlf-0}
           $(X, \ftop, \fmeet)$ has binary joins and all non-empty meets;
    \setcounter{enumi}{2}
    \renewcommand{\theenumi}{$\arabic{enumi}'$}
    \renewcommand{\labelenumi}{\theenumi.}
    \item \label{it:pmlf-3}
          If $(\bigfmeet x_i)Rz$, where the $i$ range over some index set $I$,
          then there exist $z_i$ such that $x_iRz_i$ for all $i \in I$
          and $\bigfmeet z_i \leq z$;
    \item \label{it:pmlf-4}
          If $x_iRy_i$, where $i$ ranges over some index set $I$,
          then $(\bigfmeet x_i)R(\bigfmeet y_i)$;
  \end{enumerate}
  Clearly, items~\eqref{it:pmlf-3} and~\eqref{it:pmlf-4}
  subsume~\eqref{it:mlf-3} and~\eqref{it:mlf-4}.
  A principal modal L-model is a principal modal L-frame with a valuation
  that assigns to each proposition letter a principal filter.
\end{definition}

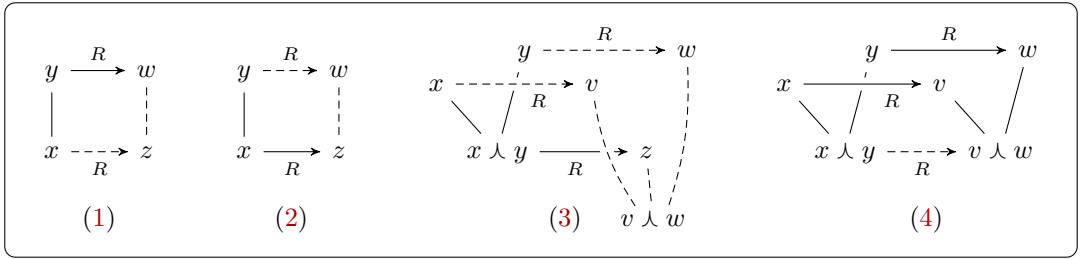
\begin{figure}[h]
  \centering
    \begin{tikzcd}[row sep=large,
                   column sep=large,
                   every matrix/.append style={draw, inner ysep=2pt, 
                     inner xsep=3pt, rounded corners}]
    \text{
    \begin{tikzpicture}[scale=.9, baseline=0]
      \node (x) at (0,0) {$x$};
      \node (y) at (0,1.2) {$y$};
      \node (z) at (1.4,0) {$z$};
      \node (w) at (1.4,1.2) {$w$};
      \draw (x) to (y);
      \draw[densely dashed, -stealth'] (x) to node[below]{\scriptsize{$R$}} (z);
      \draw[-stealth'] (y) to node[above]{\scriptsize{$R$}} (w);
      \draw[densely dashed] (w) to (z);
      \node (label) at (.7,-1) {\eqref{it:mlf-1}};
    \end{tikzpicture}
    \hspace{1.5em}
    \begin{tikzpicture}[scale=.9, baseline=0]
      \node (x) at (0,0) {$x$};
      \node (y) at (0,1.2) {$y$};
      \node (z) at (1.4,0) {$z$};
      \node (w) at (1.4,1.2) {$w$};
      \draw (x) to (y);
      \draw[-stealth'] (x) to node[below]{\scriptsize{$R$}} (z);
      \draw[densely dashed, -stealth'] (y) to node[above]{\scriptsize{$R$}} (w);
      \draw[densely dashed] (w) to (z);
      \node (label) at (.7,-1) {\eqref{it:mlf-2}};
    \end{tikzpicture}
    \hspace{1.5em}
    \begin{tikzpicture}[scale=.9, baseline=0]
      \node (x) at (-.9,1) {$x$};
      \node (y) at (.4,1.5) {$y$};
      \node (xandy) at (0,0) {$x \fmeet y$};
      \node (z) at (2.2,0) {$z$};
      \node (v) at (1.4,1) {$v$};
      \node (w) at (2.8,1.5) {$w$};
      \node (vandw) at (2.3,-1) {$v \fmeet w$};
      \node (vandwleft) at (2.1,-.9) {};
      \node (vandwright) at (2.5,-.9) {};
      \draw[-stealth'] (xandy) to node[below,pos=.4]{\scriptsize{$R$}} (z);
      \draw (x) to (xandy);
      \draw (y) to (xandy);
      \draw[white, line width=5pt, bend right=12] (v) to (vandwleft);
      \draw[densely dashed, bend right=12] (v) to (vandwleft);
      \draw[densely dashed, bend left=10] (w) to (vandwright);
      \draw[white, line width=5pt] (x) to (v);
      \draw[densely dashed, -stealth'] (x) to node[below,pos=.7]{\scriptsize{$R$}} (v);
      \draw[densely dashed, -stealth'] (y) to node[above]{\scriptsize{$R$}} (w);
      \draw[densely dashed] (vandw) to (z);
      \node (label) at (1,-1) {\eqref{it:mlf-3}};
    \end{tikzpicture}
    \hspace{1.5em}
    \begin{tikzpicture}[scale=.9, baseline=0]
      \node (x) at (-.9,1) {$x$};
      \node (y) at (.4,1.5) {$y$};
      \node (xandy) at (0,0) {$x \fmeet y$};
      \node (v) at (1.4,1) {$v$};
      \node (w) at (2.7,1.5) {$w$};
      \node (vandw) at (2.3,0) {$v \fmeet w$};
      \draw (x) to (xandy);
      \draw (y) to (xandy);
      \draw[white, line width=5pt] (x) to (v);
      \draw[-stealth'] (x) to node[below,pos=.75]{\scriptsize{$R$}}(v);
      \draw[-stealth'] (y) to node[above]{\scriptsize{$R$}} (w);
      \draw (v) to (vandw);
      \draw (w) to (vandw);
      \draw[densely dashed, -stealth'] (xandy) to node[below]{\scriptsize{$R$}} (vandw);
      \node (label) at (1.2,-1) {\eqref{it:mlf-4}};
    \end{tikzpicture}
    }
    \end{tikzcd}
  \caption{The four conditions of a modal L-frame.
           Lines denote the poset order, with high nodes being bigger.
           Arrows denote the relation $R$.}
  \label{fig:modal-l-frame}
\end{figure}

\begin{definition}  
  The interpretation of $\lan{L}_{\Box\!\Diamond}$-formulae in a
  (principal) modal L-model $\mo{M}$ is defined via the clauses from
  Definition~\ref{def:interpretation} and
  \begin{align*}
    \mo{M}, x \Vdash \Box\phi
      &\iff \forall y \in X, xRy \text{ implies } \mo{M}, y \Vdash \phi \\
    \mo{M}, x \Vdash \Diamond\phi
      &\iff \exists y \in X \text{ such that } xRy \text{ and } \mo{M}, y \Vdash \phi
  \end{align*}
  Satisfaction and validity of formulae and modal consequence pairs
  are defined as expected.
  In particular, if $\ms{K}$ is a class of modal L-frames and $\phi \cp \psi$
  is a modal consequence pair, then we write
  $\phi \Vdash_{\ms{K}} \psi$ if the consequence pair $\phi \cp \psi$ is valid
  on all frames in $\ms{K}$.
\end{definition}

  The first four conditions of a modal L-frame are depicted in
  Figure~\ref{fig:modal-l-frame}.
  Observe that~\eqref{it:mlf-1} and~\eqref{it:mlf-6} together imply
  seriality, i.e.~every state has an $R$-successor.
  If $(X, \ftop, \fmeet, R)$ is a modal L-frame, then we define
  for $x \in X$ and filters $a \subseteq X$:
  $$
    R[x] := \{ y \in X \mid xRy \}, \qquad
    [R]a = \{ x \in X \mid R[x] \subseteq a \}, \qquad
    \langle R \rangle a = \{ x \in X \mid R[x] \cap a \neq \emptyset \}.
  $$
  Definition~\ref{def:modal-l-frame}\eqref{it:mlf-1} and~\eqref{it:mlf-2}
  together say that $x \leq y$ implies $R[x] \sqsubseteq R[y]$, where
  $\sqsubseteq$ denotes the Egli-Milner order on
  $\fun{P}X$~\cite[Definition~6.2.2]{AbrJun95}.
  Furthermore, if $\mo{M}$ 
  is a modal L-model then
  $$
    \llb \Box\phi \rrb^{\mo{M}} = [R] \llb \phi \rrb^{\mo{M}}, \qquad
    \llb \Diamond\phi \rrb^{\mo{M}} = \langle R \rangle \llb \phi \rrb^{\mo{M}}.
  $$
  Next we prove persistence.

\begin{proposition}\label{prop:modal-persistence3} 
  Let $\mo{M} = (X, \ftop, \fmeet, R, V)$ be a (principal) modal L-model.
  Then for each $\phi \in \lan{L}_{\Box\!\Diamond}$ the set
  $\llb \phi \rrb^{\mo{M}}$ is a (principal) filter in $(X, \ftop, \fmeet)$.
\end{proposition}
\begin{proof}
  We assume that $\mo{M}$ is not principal; the case for principal modal
  L-models is similar.
  The proof proceeds by induction on the structure of $\phi$.
  The only non-trivial cases are the modal cases.
  We prove the statement for $\phi = \Diamond\psi$;
  the case $\phi = \Box\psi$ is similar.

%
  
  Suppose $\phi = \Diamond\psi$,
  $\mo{M}, x \Vdash \Diamond\psi$ and $x \fleq y$.
  Then there exists an $R$-successor $z$ of $x$ satisfying $\psi$,
  and by \eqref{it:mlf-2} we can find an $R$-successor $w$ of $y$
  such that $z \fleq y$. By the induction hypothesis we then find
  $\mo{M}, w \Vdash \psi$ and therefore $\mo{M}, y \Vdash \Diamond\psi$.
  Next, suppose that both $x$ and $y$ satisfy $\Diamond\psi$.
  Then there exist $v \in R[x]$ and $w \in R[y]$ satisfying $\psi$.
  By \eqref{it:mlf-4} we have $(x \fmeet y)R(v \fmeet w)$ and
  by the induction hypothesis $\mo{M}, v \fmeet w \Vdash \psi$.
  Therefore $\mo{M}, x \fmeet y \Vdash \Diamond\psi$.
  Lastly, \eqref{it:mlf-6} implies $\mo{M}, \ftop \Vdash \Diamond\psi$.
  We conclude that $\llb \Diamond\psi \rrb^{\mo{M}}$ is a filter
  in $(X, \ftop, \fmeet)$.
\end{proof}

\begin{remark}
  We could have slightly weakened condition \ref{it:mlf-4} by requiring
  the existence of some $(x \fmeet y)$-successor above $v \fmeet w$.
  We use the current formulation because it aligns more closely
  to the notion of a modal L-space.
\end{remark}

  Morphisms between modal L-frames and -models are a combination of
  L-morphisms and an adaptation of p-morphisms for positive modal
  logic~\cite{CelJan99}.

\begin{definition}\label{def:mlfm}
  A \emph{bounded L-morphism} from $(X, \ftop, \fmeet, R)$ to
  $(X', \ftop', \fmeet', R')$ is an L-morphism
  $f : (X, \ftop, \fmeet) \to (X', \ftop', \fmeet')$
  such that for all $x, y \in X$ and $z' \in X'$:
  \begin{enumerate}
    \item \label{it:mlfm-1}
          If $xRy$ then $f(x)R'f(y)$;
    \item \label{it:mlfm-2}
          If $f(x)R'z'$ then there exists a $z \in X$ such that
          $xRz$ and $f(z) \fleq z'$;
    \item \label{it:mlfm-3}
          If $f(x)R'z'$ then there exists a $w \in X$ such that
          $xRw$ and $z' \fleq' f(w)$.
  \end{enumerate}
  (See also Figure~\ref{fig:mlfm}.)
  A bounded L-morphism between models is bounded L-morphism between
  the underlying frames that preserves and reflects truth of proposition letters.
\end{definition}

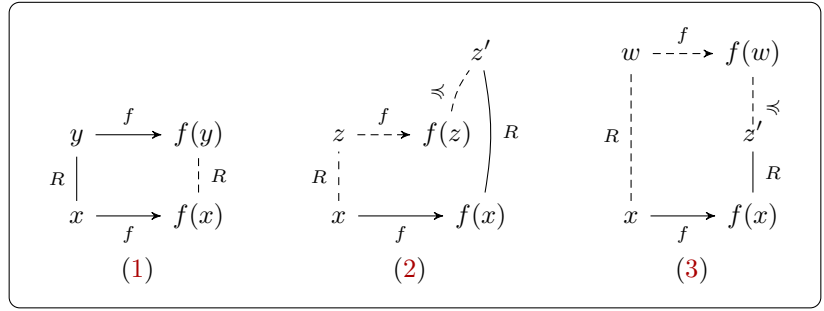
\begin{figure}[h]
  \centering
    \begin{tikzcd}[row sep=large,
                   column sep=large,
                   every matrix/.append style={draw, inner ysep=2pt, 
                     inner xsep=3pt, rounded corners}]
    \text{
    \begin{tikzpicture}[scale=.9, baseline=0]
      \node (x) at (0,0) {$x$};
      \node (y) at (0,1.2) {$y$};
      \node (fx) at (1.8,0) {$f(x)$};
      \node (fy) at (1.8,1.2) {$f(y)$};
      \draw (x) to node[left]{\scriptsize{$R$}} (y);
      \draw[-stealth'] (x) to node[below]{\scriptsize{$f$}} (fx);
      \draw[-stealth'] (y) to node[above]{\scriptsize{$f$}} (fy);
      \draw[densely dashed] (fx) to node[right]{\scriptsize{$R$}} (fy);
      \node (label) at (.9,-.8) {\eqref{it:mlfm-1}};
    \end{tikzpicture}
    \qquad
    \begin{tikzpicture}[scale=.9, baseline=0]
      \node (x) at (0,0) {$x$};
      \node (z) at (0,1.2) {$z$};
      \node (fx) at (2.1,0) {$f(x)$};
      \node (fz) at (1.6,1.2) {$f(z)$};
      \node (zp) at (2.1,2.4) {$z'$};
      \draw[dashed] (x) to node[left]{\scriptsize{$R$}} (z);
      \draw[-stealth'] (x) to node[below]{\scriptsize{$f$}} (fx);
      \draw[densely dashed, -stealth'] (z) to node[above]{\scriptsize{$f$}} (fz);
      \draw[bend right=10] (fx) to node[right]{\scriptsize{$R$}} (zp);
      \draw[densely dashed, bend left=12] (fz) to node[left]{\scriptsize{$\fleq$}} (zp);
      \node (label) at (1.05,-.8) {\eqref{it:mlfm-2}};
    \end{tikzpicture}
    \qquad
    \begin{tikzpicture}[scale=.9, baseline=0]
      \node (x) at (0,0) {$x$};
      \node (w) at (0,2.4) {$w$};
      \node (fx) at (1.8,0) {$f(x)$};
      \node (fw) at (1.8,2.4) {$f(w)$};
      \node (zp) at (1.8,1.2) {$z'$};
      \draw[densely dashed] (x) to node[left]{\scriptsize{$R$}} (w);
      \draw[-stealth'] (x) to node[below]{\scriptsize{$f$}} (fx);
      \draw[densely dashed, -stealth'] (w) to node[above]{\scriptsize{$f$}} (fw);
      \draw (fx) to node[right]{\scriptsize{$R$}} (zp);
      \draw[densely dashed] (fw) to node[right]{\scriptsize{$\fleq$}} (1.8,1.3);
      \node (label) at (.9,-.8) {\eqref{it:mlfm-3}};
    \end{tikzpicture}
    }
    \end{tikzcd}
  \caption{The conditions of a bounded L-morphism.}
  \label{fig:mlfm}
\end{figure}

\begin{proposition}\label{prop:mor-pres-truth-modal}
  Let $\mo{M} = (X, \ftop, \fmeet, R, V)$ and $\mo{M}' = (X', \ftop', \fmeet', R', V')$ be
  two (principal) modal L-models. If $f : \mo{M} \to \mo{M}'$ is a bounded L-morphism,
  $x \in X$ and $\phi \in \lan{L}_{\Box\!\Diamond}$, then
  $$
    \mo{M}, x \Vdash \phi \iff \mo{M}', f(x) \Vdash \phi.
  $$
\end{proposition}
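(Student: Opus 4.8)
The plan is to prove both directions simultaneously by induction on the structure of $\phi$, exploiting throughout the fact that by Proposition~\ref{prop:modal-persistence3} every truth set is a filter, and hence in particular an upset. This persistence is exactly what lets us transport satisfaction upward along $\leq$, and it will be invoked in each of the non-trivial inductive steps, since the relevant frame conditions deliver comparabilities rather than equalities.

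For the base cases, $\top$ and $\bot$ are immediate, and the case $\phi = p$ holds because a bounded L-morphism of models preserves and reflects truth of proposition letters by definition (equivalently $V = f^{-1} \circ V'$), so $x \in V(p)$ iff $f(x) \in V'(p)$. The conjunction case is routine, as both $f$ and satisfaction commute with $\wedge$ on the nose. The disjunction case is the first genuinely interesting one and mirrors the corresponding step for the non-modal language: for the forward direction, if $\mo{M}, x \Vdash \phi \vee \psi$ is witnessed by $y, z$ with $y \wedge z \leq x$, then $f(y) \wedge f(z) = f(y \wedge z) \leq f(x)$ and the induction hypothesis supplies witnesses at $f(x)$; for the backward direction, a witness $y', z' \in X'$ with $y' \wedge z' \leq f(x)$ is converted into a witness in $X$ precisely by the defining condition of an L-morphism (Definition~\ref{def:L-mor}), yielding $y, z \in X$ with $y' \leq f(y)$, $z' \leq f(z)$ and $y \wedge z \leq x$. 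Persistence then pushes satisfaction of $\phi$ and $\psi$ up from $y', z'$ to $f(y), f(z)$ before the induction hypothesis is applied.

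For the modal cases I would treat $\Box$ and $\Diamond$ separately, each direction invoking one of the three clauses of Definition~\ref{def:mlfm}. For $\Box\phi$, the backward direction uses the relation-preserving clause \eqref{it:mlfm-1} (each $R$-successor $y$ of $x$ maps to an $R'$-successor $f(y)$ of $f(x)$), while the forward direction uses the back clause \eqref{it:mlfm-2} to find, for any $R'$-successor $z'$ of $f(x)$, some $R$-successor $z$ of $x$ with $f(z) \leq z'$, after which persistence lifts satisfaction from $f(z)$ to $z'$. Dually for $\Diamond\phi$: the forward direction again uses \eqref{it:mlfm-1}, and the backward direction uses the back clause \eqref{it:mlfm-3} to turn an $R'$-successor $z'$ of $f(x)$ satisfying $\phi$ into an $R$-successor $w$ of $x$ with $z' \leq f(w)$, so that persistence gives $\mo{M}', f(w) \Vdash \phi$ and the induction hypothesis gives $\mo{M}, w \Vdash \phi$.

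I do not expect a serious obstacle: this is a bounded-morphism induction of the familiar kind, and the two matching back clauses \eqref{it:mlfm-2} and \eqref{it:mlfm-3} have been designed exactly so that the $\Box$ and $\Diamond$ steps close. The only points requiring care are (i) remembering to invoke persistence to move satisfaction along $\leq$ in the $\vee$, $\Box$, and $\Diamond$ steps, and (ii) the backward disjunction step, where it is essential that $f$ is an L-morphism and not merely a semilattice homomorphism.
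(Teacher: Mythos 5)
Your proof is correct and takes essentially the same approach as the paper's: a simultaneous induction on $\phi$ in which clause \eqref{it:mlfm-1} handles the easy direction of each modal case, clauses \eqref{it:mlfm-2} and \eqref{it:mlfm-3} handle the forward $\Box$ and backward $\Diamond$ directions respectively, and persistence (Proposition~\ref{prop:modal-persistence3}) transports satisfaction along $\leq$ in $\mo{M}'$ exactly where the paper invokes it. The paper labels the propositional cases (including the disjunction step via the L-morphism condition) as routine and only showcases the modal cases, so your spelled-out $\vee$ step is just added detail, not a different argument.
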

\begin{proof}
  This follows from a routine induction on the structure of $\phi$.
  We showcase the modal cases of the proof.
  Suppose $\phi = \Box\psi$. It follows immediately from
  Definition~\ref{def:mlfm}\eqref{it:mlfm-1} that
  $\mo{M}', f(x) \Vdash \Box\psi$ implies $\mo{M}, x \Vdash \Box\psi$.
  So suppose $\mo{M}, x \Vdash \Box\psi$.
  If $y'$ is an $R'$-successor of $f(x)$, then there exists some
  $z \in X$ such that $xRz$ and $f(z) \fleq y'$.
  This implies $\mo{M}, z \Vdash \psi$ and by induction 
  $\mo{M}', f(z) \Vdash \psi$. Persistence then yields
  $\mo{M}', y' \Vdash \psi$. Therefore $\mo{M}', f(x) \Vdash \Box\psi$.
  
  If $\phi = \Diamond\psi$ Then the preservation from
  left to right follows from Definition~\ref{def:mlfm}\eqref{it:mlfm-1}.
  Conversely, if $\mo{M}', f(x) \Vdash \Diamond\psi$,
  then there exists a $y' \in X'$ such that $f(x)Ry'$ and $\mo{M}', y' \Vdash \psi$.
  By \eqref{it:mlfm-3} we can find some $w \in X$ such that
  $xRw$ and $y' \fleq f(w)$. Persistence implies $\mo{M}', f(w) \Vdash \psi$
  and induction yields $\mo{M}, w \Vdash \psi$.
  Therefore $\mo{M}, x \Vdash \Diamond\psi$.
\end{proof}

  We give a number of modal consequence pairs that are valid in
  every modal L-frame. These motivate the definition of a modal lattice
  in Section~\ref{subsec:modal-lattice}.

\begin{lemma}\label{lem:some-val}
  Let $(X, \ftop, \fmeet, R)$ be a modal L-frame. The following consequence
  pairs are valid:
  \begin{align*}
    \top &\cp \Box\top
      & \top &\cp \Diamond\top
      && 
      & \text{(modal top)} \\
    \Box(\phi \wedge \psi) &\cp \Box\phi \wedge \Box\psi
      & \Diamond\phi &\cp \Diamond(\phi \vee \psi)
      &&& \text{(monotonicity)} \\
    \Box\phi \wedge \Box\psi &\cp \Box(\phi \wedge \psi)
      &\Diamond \phi \wedge \Box \psi &\cp \Diamond(\phi \wedge \psi)&& 
      & \text{(normality and duality)}
  \end{align*}
\end{lemma}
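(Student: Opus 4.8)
The plan is to verify each of the seven consequence pairs directly from the satisfaction clauses of Definition~\ref{def:modal-l-frame}, checking in every case that in an arbitrary modal L-model $\mo{M} = (X, \leq, R, V)$ the truth set of the antecedent is contained in that of the consequent. The four pairs involving $\Box$ are immediate. For $\top \cp \Box\top$ note that $\mo{M}, y \Vdash \top$ holds for every $y$, so vacuously $\mo{M}, x \Vdash \Box\top$ for every $x$. For the normality pair $\Box\phi \wedge \Box\psi \cp \Box(\phi\wedge\psi)$ together with the monotonicity pair $\Box(\phi\wedge\psi) \cp \Box\phi \wedge \Box\psi$, I would simply unfold the $\Box$-clause and the meet-clause: a state $x$ forces $\Box(\phi\wedge\psi)$ exactly when every $R$-successor forces both $\phi$ and $\psi$, which is precisely the condition for $x$ to force $\Box\phi \wedge \Box\psi$. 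None of these four appeals to the frame conditions.

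Next I would treat the three pairs involving $\Diamond$. The pair $\Diamond\bot \cp \bot$ holds because $\bot$ is forced at no state, so $\llb \Diamond\bot \rrb^{\mo{M}}$ is empty. The pair $\top \cp \Diamond\top$ is the first place a frame condition enters: seriality \eqref{it:mlf-5} supplies, for each $x$, some $y$ with $xRy$, and since $\mo{M}, y \Vdash \top$ holds automatically, we obtain $\mo{M}, x \Vdash \Diamond\top$. For the monotonicity pair $\Diamond\phi \cp \Diamond(\phi\vee\psi)$, I would take a witness $y$ with $xRy$ and $\mo{M}, y \Vdash \phi$; the first disjunct of the $\vee$-clause gives $\mo{M}, y \Vdash \phi\vee\psi$ at that same $y$, so $y$ continues to witness $\mo{M}, x \Vdash \Diamond(\phi\vee\psi)$.

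Finally, the duality pair $\Diamond\phi \wedge \Box\psi \cp \Diamond(\phi\wedge\psi)$ is the only one combining the two modalities, and it is the step I would flag as the most delicate, although it remains short. Assuming $\mo{M}, x \Vdash \Diamond\phi$ and $\mo{M}, x \Vdash \Box\psi$, I would take the existential witness $y$ for $\Diamond\phi$, so that $xRy$ and $\mo{M}, y \Vdash \phi$. The key observation is that this same $y$ must also satisfy $\psi$: since $xRy$ and $x$ forces $\Box\psi$, the $\Box$-clause yields $\mo{M}, y \Vdash \psi$. Hence $\mo{M}, y \Vdash \phi\wedge\psi$ with $xRy$, giving $\mo{M}, x \Vdash \Diamond(\phi\wedge\psi)$. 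There is no genuine obstacle anywhere; the only point worth attention is that the monotonicity of $\Diamond$ and the duality axiom rely on reusing a single diamond-witness rather than on the non-standard join semantics, which is exactly why these pairs are valid while the stronger $\Diamond(\phi\vee\psi) \cp \Diamond\phi \vee \Diamond\psi$ is not.
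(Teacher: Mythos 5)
Your proof is correct and follows essentially the same route as the paper, whose proof simply remarks that all seven pairs are immediate from the interpretation of $\Box$ and $\Diamond$. In fact your write-up is more accurate on one point: the paper's proof adds that none of the pairs rely on the conditions of Definition~\ref{def:modal-l-frame}, but as you correctly observe, $\top \cp \Diamond\top$ genuinely needs seriality (condition~\eqref{it:mlf-5}) --- validity of that pair on a frame is exactly seriality, as the paper itself points out immediately after the lemma.
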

\begin{proof}
  All of these follow immediately from the definition of the interpretation
  of $\Box$ and $\Diamond$.
  In particular, they do not rely on any of the conditions from 
  Definition~\ref{def:modal-l-frame}.
\end{proof}

  Observe that the consequence pair $\top \cp \Diamond \top$ corresponds
  to seriality, i.e.~the frame condition that every state has an $R$-successor.
  In presence of $\top \cp \Box\top$ and the duality axiom it is
  equivalent to $\Box\phi \cp \Diamond\phi$.

\begin{remark}\label{rem:Dunn}
  The duality axiom in Lemma~\ref{lem:some-val} corresponds to
  one of Dunn's duality axioms for positive modal logic~\cite{Dun95}.
  It seems that the non-standard interpretation of joins makes Dunn's other
  duality axiom, $\Box(\phi \vee \psi) \cp \Box\phi \vee \Diamond\psi$,
  unsuitable in our context.
  On the other hand, we have $\Box\phi \cp \Diamond\phi$,
  which is not assumed by Dunn.
  We flag investigation of the connection between the various axioms
  relating $\Box$ and $\Diamond$ as an interesting direction for further
  research.
\end{remark}

\subsection{Logic and Modal Lattices}\label{subsec:modal-lattice}

  Guided by the validities from Lemma~\ref{lem:some-val}, we define 
  the logic $\log{L}_{\Box\!\Diamond}$ as follows.
  
\begin{definition}\label{def:modal-logic}
  Let $\log{L}_{\Box\!\Diamond}$ be the smallest set of modal consequence pairs
  closed under the axioms and rules from Definition~\ref{def:logic},
  and under:
  $$
  \begin{array}{cccr}
    \phantom{\Box}\top \cp \Box\top
      &&\top \cp \Diamond\top
      &\text{(\emph{modal top})} \\ [.4em]
    \dfrac{\phi \cp \psi}{\Box\phi \cp \Box\psi}
      &&\dfrac{\phi \cp \psi}{\Diamond\phi \cp \Diamond\psi}
      &\text{(\emph{Becker's rules})} \\ [1em]
    \Box\phi \wedge \Box\psi \cp \Box(\phi \wedge \psi)
      &&\Diamond\phi \wedge \Box\psi \cp \Diamond(\phi \wedge \psi)
      &\text{(\emph{linearity} and \emph{duality})}
  \end{array}
  $$
  
  If $\Gamma$ is a set of modal consequence pairs then
  $\log{L}_{\Box\!\Diamond}(\Gamma)$ denotes the smallest set of modal consequence pairs
  closed under the axioms and rules above and those in $\Gamma$.
  We write $\phi \vdash_{\Gamma} \psi$ if
  $\phi \cp \psi \in \log{L}_{\Box\!\Diamond}(\Gamma)$
  and $\phi \dashv\vdash_{\Gamma} \psi$ if both $\phi \vdash_{\Gamma} \psi$
  and $\psi \vdash_{\Gamma} \phi$,
  omitting $\Gamma$ if it is empty.
\end{definition}

  Observe that Becker's rule together with linearity for
  $\Box$ implies that $\Box$ is a normal modal operator.
  The algebraic semantics of the logic is given by modal lattices.

\begin{definition}\label{def:modal-lat}
  A \emph{modal lattice} is a tuple $(A, \Box, \Diamond)$ consisting of a lattice
  $A$ and two maps $\Box, \Diamond : A \to A$ satisfying for all $a, b \in A$:
  \begin{align*}
    \top
      &= \Box\top
      &  \top
      &= \Diamond\top
      &  
      & \\ 
    \Diamond a
      &\leq \Diamond(a \vee b)
      &\Box(a \wedge b)
      &= \Box a \wedge \Box b
      &\Diamond a \wedge \Box b
      &\leq \Diamond(a \wedge b)
  \end{align*}
  A \emph{modal lattice homomorphism} from $(A, \Box, \Diamond)$ to
  $(A', \Box', \Diamond')$ is a lattice homomorphism $h : A \to A'$
  such that $h(\Box a) = \Box' h(a)$ and $h(\Diamond a) = \Diamond' h(a)$
  for all $a \in A$.
  We write $\cat{MLat}$ for the category of modal lattice and modal lattice
  homomorphisms.
\end{definition}

\begin{example}
  Let $\mo{X} = (X, \ftop, \fmeet, R)$ be a modal L-frame.
  Then $\mo{X}^* := (\fil(X, \ftop, \fmeet), [R], \langle R \rangle)$ is a modal lattice.
  If $\mo{X}$ is principal then
  $\mo{X}^{\ddagger} := (\pfil(X, \ftop, \fmeet), [R], \langle R \rangle)$ is
  a modal lattice.
\end{example}

  Formulae $\phi \in \lan{L}_{\Box\!\Diamond}$ can be interpreted in
  a modal lattice $\amo{A} = (A, \Box, \Diamond)$ with an assignment
  $\sigma : \Prop \to A$. Analogous to Section~\ref{subsec:algsem},
  the interpretation of proposition letters is given by the assignment,
  and the connectives and modalities as interpreted via their counterparts
  in $\amo{A}$.
  This gives rise to validity of formulae and modal consequence pairs in
  a modal lattice $\amo{A}$.
  
  If $\mo{M} = (\mo{X}, V)$ is a modal L-model then $V$ is an assignment
  for $\mo{X}^{*}$ and we write $\mo{M}^{*} = (\mo{X}^{*}, V)$.
  If $\mo{M} = (\mo{X}, V)$ is a principal modal L-model then
  $V$ is an assignment for $\mo{X}^{\ddagger}$ and we let
  $\mo{M}^{\ddagger} = (\mo{X}^{\ddagger}, V)$.
  We obtain the following counterpart of Lemma~\ref{lem:complex-alg1}.
  
\begin{lemma}\label{lem:complex-alg}
  Let $\mo{M}$ be a modal L-model, $\mo{N}$ a principal modal L-model,
  and $\phi \in \lan{L}_{\Box\!\Diamond}$.
  Then
  $$
    \llb \phi \rrb^{\mo{M}} = \llp \phi \rrp_{\mo{M}^{*}}
    \quad\text{and}\quad
    \llb \phi \rrb^{\mo{N}} = \llp \phi \rrp_{\mo{N}^{\ddagger}}.
  $$
\end{lemma}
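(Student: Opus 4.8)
The plan is to prove both equalities simultaneously by induction on the structure of $\phi \in \lan{L}_{\Box\!\Diamond}$, following the same blueprint as Lemma~\ref{lem:complex-alg1}. The base cases ($p$, $\top$, $\bot$) and the propositional connectives ($\wedge$, $\vee$) are handled exactly as in the proof of Lemma~\ref{lem:complex-alg1}: the interpretation of $\wedge$ is intersection both in the frame semantics and in the complex algebra, and the interpretation of $\vee$ is the filter-join $\curlyvee$, so that the clauses of Definition~\ref{def:modal-l-frame} match the lattice operations of $\fil(X,\leq)$. Hence only the two modal cases require genuine work.

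For $\mo{M}$, recall that we have already observed $\llb \Box\phi \rrb^{\mo{M}} = [R]\llb \phi \rrb^{\mo{M}}$ and $\llb \Diamond\phi \rrb^{\mo{M}} = \langle R \rangle \llb \phi \rrb^{\mo{M}}$ directly from the satisfaction clauses. Thus, for $\phi = \Box\psi$,
$$
  \llb \Box\psi \rrb^{\mo{M}}
    = [R]\llb \psi \rrb^{\mo{M}}
    = [R]\llp \psi \rrp_{\mo{M}^{\ddagger}}
    = \llp \Box\psi \rrp_{\mo{M}^{\ddagger}},
$$
where the middle equality is the induction hypothesis and the last is the definition of the interpretation of $\Box$ in $\mo{M}^{\ddagger} = (\fil(X, \leq), [R], \langle R \rangle)$ from Definition~\ref{def:modal-lat}. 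The case $\phi = \Diamond\psi$ is identical, using $\langle R \rangle$ in place of $[R]$.

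The principal case proceeds the same way, but requires a preliminary observation: that $\mo{N}^{\dagger} = (\pfil(X, \leq), [R], \langle R \rangle)$ is genuinely a modal lattice. The subtlety is that its carrier is the sublattice of \emph{principal} filters, so one must know that $[R]$ and $\langle R \rangle$ send principal filters to principal filters; this is exactly the content of the persistence result for principal modal L-models proved just above, which relies on conditions \eqref{it:pmlf-3} and \eqref{it:pmlf-4} of Definition~\ref{def:pmlf}. Once this is secured, the lattice operations on $\pfil(X, \leq)$ coincide with those of $\fil(X, \leq)$ restricted to principal filters, since ${\uparrow}x \cap {\uparrow}y = {\uparrow}(x \vee y)$ and ${\uparrow}x \curlyvee {\uparrow}y = {\uparrow}(x \wedge y)$, and the modal operations of $\mo{N}^{\dagger}$ are the restrictions of those of the ambient $\fil$-complex algebra. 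The induction then goes through verbatim.

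I expect the main obstacle to be bookkeeping rather than depth: the only real content is confirming that the modal operators are closed on principal filters (guaranteed by the frame conditions of Definition~\ref{def:pmlf}) and that the two lattice structures agree there. Everything else is a transparent unwinding of the interpretation clauses against Definition~\ref{def:modal-lat}, so the proof can reasonably be compressed to ``a routine induction, the modal cases using the identities $\llb \Box\phi \rrb^{\mo{M}} = [R]\llb\phi\rrb^{\mo{M}}$ and $\llb \Diamond\phi \rrb^{\mo{M}} = \langle R\rangle\llb\phi\rrb^{\mo{M}}$.''
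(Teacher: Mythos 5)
Your proof is correct and matches the paper's (implicit) argument: the paper states this lemma without proof precisely because it is the routine structural induction you carry out, a direct counterpart of Lemma~\ref{lem:complex-alg1}, with the modal cases unwinding $\llb \Box\psi \rrb^{\mo{M}} = [R]\llb\psi\rrb^{\mo{M}}$ and $\llb \Diamond\psi \rrb^{\mo{M}} = \langle R\rangle\llb\psi\rrb^{\mo{M}}$ against Definition~\ref{def:modal-lat}. Your additional care in the principal case --- checking that $[R]$ and $\langle R \rangle$ are closed on principal filters via \eqref{it:pmlf-3} and \eqref{it:pmlf-4}, and that $\pfil(X,\leq)$ sits inside $\fil(X,\leq)$ as a bounded sublattice --- is exactly what the paper relies on (via the Example defining $\mo{X}^{\dagger}$ and the persistence proposition), so no gap remains.
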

  
  We write $\phi \cdash[\Gamma] \psi$ if any modal lattice that
  validates all consequence pairs in $\Gamma$ also validates
  $\phi \cp \psi$.
  Then we can prove the next theorem in the same way as in
  Section~\ref{subsec:algsem}.
  
\begin{theorem}\label{thm:alg-sem2}
  Let $\Gamma \cup \{ \phi \cp \psi \}$ be a set of modal consequence pairs.
  Then $\phi \vdash_{\Gamma} \psi$ iff $\phi \cdash[\Gamma] \psi$.
\end{theorem}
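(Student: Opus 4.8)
The plan is to mirror the proof of Theorem~\ref{thm:alg-sem} on the propositional fragment and add the modal bookkeeping, building a Lindenbaum--Tarski \emph{modal} lattice and reducing both $\vdash_\Gamma$ and $\cdash[\Gamma]$ to a single inequality in it. For the soundness direction, $\phi \vdash_\Gamma \psi \Rightarrow \phi \cdash[\Gamma]\psi$, I would induct on a derivation in $\log{L}_{\Box\!\Diamond}(\Gamma)$: the propositional axioms and rules are discharged as before, the pairs of $\Gamma$ hold by the definition of $\cdash[\Gamma]$, and it remains to see that every modal axiom and rule of Definition~\ref{def:modal-logic} is valid in an arbitrary modal lattice. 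This is built into Definition~\ref{def:modal-lat}: the modal top/bottom pairs and the linearity and duality axioms are exactly its defining (in)equalities, once each equation $s = t$ is split into $s \cp t$ and $t \cp s$ and one notes that, e.g., $\Box\top \cp \top$ is an instance of the top axiom and $\bot \cp \Diamond\bot$ of the bottom axiom. Becker's rules are validated because $\Box$ and $\Diamond$ are monotone in any modal lattice: if $a \le b$ then $\Box a = \Box(a \wedge b) = \Box a \wedge \Box b \le \Box b$, and $\Diamond a \le \Diamond(a \vee b) = \Diamond b$.

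For completeness I would reuse the construction from Section~\ref{subsec:algsem}, forming $L(\Gamma)$ as the set of $\dashv\vdash_\Gamma$-classes with its induced lattice structure, and equipping it with $\Box[\phi] := [\Box\phi]$ and $\Diamond[\phi] := [\Diamond\phi]$. The one genuinely new point is that these operations are \emph{well defined} on classes, and this is precisely Becker's rules: $\phi \dashv\vdash_\Gamma \chi$ yields $\Box\phi \dashv\vdash_\Gamma \Box\chi$ and $\Diamond\phi \dashv\vdash_\Gamma \Diamond\chi$. One then checks that $\amo{L}_\Gamma = (L(\Gamma), \Box, \Diamond)$ satisfies the equations of Definition~\ref{def:modal-lat}, each of which unwinds to a pair of derivable consequence pairs; for instance $\Box(a \wedge b) = \Box a \wedge \Box b$ splits into $\Box(\phi \wedge \psi) \cp \Box\phi \wedge \Box\psi$ (from Becker's rule and the conjunction rules) and its converse, which is the linearity axiom. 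Since $\amo{L}_\Gamma$ validates $\Gamma$ by construction, it is a modal lattice validating $\Gamma$, and a routine induction on $\phi$ --- with modal cases $\llp\Box\phi\rrp_{\amo{L}_\Gamma} = [\Box\phi]$ and $\llp\Diamond\phi\rrp_{\amo{L}_\Gamma} = [\Diamond\phi]$ --- gives $\llp\phi\rrp_{\amo{L}_\Gamma} = [\phi]$.

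Next I would establish the modal analogue of Lemma~\ref{lem:LT}, namely $\phi \cdash[\Gamma]\psi$ iff $[\phi] \le_L [\psi]$. Left to right is immediate; for the converse, any modal lattice $\amo{A} = (A, \Box, \Diamond)$ validating $\Gamma$ together with an assignment $\sigma_A$ admits a modal lattice homomorphism $\amo{L}_\Gamma \to \amo{A}$ sending $[p] \mapsto \sigma_A(p)$ --- well defined because $A$ validates $\Gamma$, and modal because it respects $\Box$ and $\Diamond$ by construction --- so monotonicity carries $[\phi] \le_L [\psi]$ to $\llp\phi\rrp_{\amo{A}} \le_A \llp\psi\rrp_{\amo{A}}$. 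Finally, exactly as in Theorem~\ref{thm:alg-sem}, the conjunction rules with reflexivity and transitivity give $\phi \vdash_\Gamma \psi$ iff $[\phi \wedge \psi] = [\phi]$ iff $[\phi] \le_L [\psi]$, and combining the two reductions closes the argument.

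The main obstacle --- the only real difference from the propositional Theorem~\ref{thm:alg-sem} --- is ensuring that $\Box$ and $\Diamond$ descend to well-defined operations on the quotient and that the quotient satisfies every defining (in)equality of a modal lattice. The former rests entirely on Becker's rules, and the latter amounts to carefully matching each defining equation of Definition~\ref{def:modal-lat} with the corresponding modal axiom(s), splitting equalities into two consequence pairs.
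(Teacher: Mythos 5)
Your proposal is correct and matches the paper's intent exactly: the paper gives no separate argument for Theorem~\ref{thm:alg-sem2}, stating only that it ``can be proven in the same way as in Section~\ref{subsec:algsem}'', and your proof is precisely that argument carried out --- the Lindenbaum--Tarski modal lattice, well-definedness of $\Box$ and $\Diamond$ on equivalence classes via Becker's rules, the matching of each defining (in)equality of Definition~\ref{def:modal-lat} with the corresponding axioms, and the reduction through the modal analogue of Lemma~\ref{lem:LT}. No gaps beyond those already implicit in the paper's propositional treatment.
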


\subsection{Modal L-spaces and Duality}\label{subsec:modal-l-space}

  
  We define the modal counterpart of L-spaces as follows.

\begin{definition}\label{def:mls}
  A \emph{modal L-space} is a tuple $(X, \ftop, \fmeet, \tau, R)$ such that
  \begin{enumerate}
    \item \label{it:mls-1}
          $(X, \ftop, \fmeet, \tau)$ is an L-space;
    \item \label{it:mls-1b}
          $R$ is a binary relation on $X$ such that
          $\ftop R x$ iff $x = \ftop$ for all $x \in X$;
    \item \label{it:mls-2}
          If $a$ is a clopen filter, then so are $[R]a$ and $\langle R \rangle a$;
    \item \label{it:mls-3}
          \label{it:mls-tight}
          For all $x, y \in X$ we have $xRy$ if and only if for all $a \in \cfil\topo{X}$:
          \begin{itemize}
            \item If $x \in [R]a$ then $y \in a$;
            \item If $y \in a$ then $x \in \langle R \rangle a$.
          \end{itemize}
  \end{enumerate}
  Truth and validity in modal L-spaces is defined as usual, using
  clopen valuations.
\end{definition}

  The third item is a condition often seen in the definition of general
  frames. Item \eqref{it:mls-tight} is our counterpart of the tightness
  condition, and has previously been used in~\cite[Section~2]{CelJan99}.
  Next, we prove that each modal L-space has an underlying
  (principal) modal L-frame.

\begin{lemma}\label{lem:mls-succ-closed}
  Let $\topo{X} = (X, \ftop, \fmeet, \tau, R)$ be a modal L-space.
  Then $R[x]$ is closed for all $x \in X$.
\end{lemma}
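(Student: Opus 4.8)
The plan is to exploit the tightness condition \eqref{it:mls-tight} to exhibit $R[x]$ as an intersection of clopen sets, from which closedness is immediate, since an arbitrary intersection of closed sets is closed. Fix $x \in X$. By \eqref{it:mls-tight}, a point $y$ belongs to $R[x]$ if and only if for every clopen filter $a$ we have both $x \in [R]a \Rightarrow y \in a$ and $y \in a \Rightarrow x \in \langle R \rangle a$. I would treat these two universally quantified implications separately.

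The first implication says that $y$ lies in every clopen filter $a$ with $x \in [R]a$, that is, $y \in \bigcap \{ a \in \cfil\topo{X} \mid x \in [R]a \}$. For the second, I would pass to the contrapositive: the implication $y \in a \Rightarrow x \in \langle R \rangle a$ is equivalent to $x \notin \langle R \rangle a \Rightarrow y \notin a$, so the second clause says precisely that $y \in \bigcap \{ X \setminus a \mid a \in \cfil\topo{X},\ x \notin \langle R \rangle a \}$. Combining the two reformulations yields the identity
\[
  R[x]
    = \bigcap \{ a \mid a \in \cfil\topo{X},\ x \in [R]a \}
      \cap
      \bigcap \{ X \setminus a \mid a \in \cfil\topo{X},\ x \notin \langle R \rangle a \}.
\]

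Each member of the first family is a clopen filter, hence closed, and each member of the second family is the complement of a clopen set, hence again closed; therefore $R[x]$ is an intersection of closed sets and is itself closed. The only point requiring care — the mild obstacle — is the bookkeeping of the contrapositive in the $\langle R \rangle$ clause: one must remember that it is the complements $X \setminus a$, and not the filters $a$ themselves, that enter the second intersection. Note that this argument does not use seriality \eqref{it:mls-1b}; that hypothesis only guarantees $R[x] \neq \emptyset$, whereas closedness holds regardless, and the convention $\bigcap \emptyset = X$ causes no trouble since $X = [R]X$ already places $X$ in the first family.
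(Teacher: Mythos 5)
Your proof is correct and is essentially the paper's own argument in dual form: the paper fixes $y \notin R[x]$ and uses the tightness condition to produce a clopen filter $a$ such that either $X \setminus a$ (when $x \in [R]a$, $y \notin a$) or $a$ itself (when $y \in a$, $x \notin \langle R \rangle a$) is an open neighbourhood of $y$ disjoint from $R[x]$, which is precisely the pointwise version of your identity writing $R[x]$ as the intersection of the clopen filters $a$ with $x \in [R]a$ and the complements $X \setminus a$ with $x \notin \langle R \rangle a$. Both arguments rest on exactly the same ingredients (tightness plus clopenness), so there is nothing to fix.
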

\begin{proof}
  Suppose $y \notin R[x]$.
  Then there exists a clopen filter $a$ such that either $x \in [R]a$
  and $y \notin a$, or $y \in a$ and $x \notin \langle R \rangle a$.
  In the first case $X \setminus a$ is a clopen neighbourhood
  of $y$ disjoint from $R[x]$. In the second case
  $a$ is a clopen neighbourhood of $y$ disjoint from $R[x]$.
\end{proof}
  
\begin{proposition}\label{prop:mls-mlf}
  Let $\topo{X} = (X, \ftop, \fmeet, \tau, R)$ be a modal L-space.
  Then $(X, \ftop, \fmeet, R)$ is a principal modal L-frame. 
\end{proposition}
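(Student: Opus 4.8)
The plan is to check, one by one, the six clauses of Definition~\ref{def:pmlf} for the structure $(X,\leq,R)$. Clause~\eqref{it:pmlf-0} is free: by Theorem~\ref{thm:duality-lat-lspace} the L-space $\topo{X}$ is isomorphic to $\tfil A$ for a lattice $A$, so it carries binary joins (computed by $\curlyvee$), while the corollary to Theorem~\ref{thm:mspace-bsl} guarantees all non-empty meets; clause~\eqref{it:pmlf-5} is exactly seriality~\eqref{it:mls-1b}. The real content is therefore clauses~\eqref{it:pmlf-1}--\eqref{it:pmlf-4}, and the device driving all of them will be the observation that each successor set $R[x]$ has a \emph{least} element $m_x := \bigwedge R[x]$. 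This meet exists because $R[x]\neq\emptyset$ (seriality) and $\topo{X}$ has all non-empty meets. To see that $m_x$ actually lies in $R[x]$, I would invoke tightness~\eqref{it:mls-tight}: writing an arbitrary clopen filter as ${\uparrow}c$ (permissible since clopen filters are principal, Lemma~\ref{lem:closed-filter-principal}), one has $x \in [R]({\uparrow}c) \iff R[x]\subseteq{\uparrow}c \iff c \leq m_x \iff m_x \in {\uparrow}c$, and moreover $m_x\in{\uparrow}c$ forces $R[x]\subseteq{\uparrow}c$ and hence, as $R[x]$ is non-empty, $x\in\langle R\rangle({\uparrow}c)$. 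Both bullet points of~\eqref{it:mls-tight} then hold, so $xRm_x$.

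Next I would settle the two monotonicity clauses. For~\eqref{it:pmlf-1}, the point is that $[R]a$ is a clopen filter by~\eqref{it:mls-2}, hence an upset, so $x\leq y$ and $R[x]\subseteq a$ imply $R[y]\subseteq a$ for every clopen filter $a$; intersecting over all clopen filters containing $R[x]$ and using Lemma~\ref{lem:closed-filter-intersec} gives $R[y]\subseteq{\uparrow}m_x$, whence any $z$ with $yRz$ satisfies $z\geq m_x$ and we may take $w:=m_x$. For~\eqref{it:pmlf-2} I would argue by contradiction: if $R[y]\cap{\uparrow}w=\emptyset$, then since $R[y]$ is closed (Lemma~\ref{lem:mls-succ-closed}) and ${\uparrow}w$ is the intersection of the clopen filters containing $w$, compactness yields a single clopen filter $a\ni w$ with $R[y]\cap a=\emptyset$, i.e.\ $y\notin\langle R\rangle a$; but $w\in R[x]\cap a$ gives $x\in\langle R\rangle a$, and $\langle R\rangle a$ is again an upset, so $x\leq y$ forces $y\in\langle R\rangle a$, a contradiction.

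The infinitary clauses~\eqref{it:pmlf-3} and~\eqref{it:pmlf-4} I would reduce to the identity $m_{\bigwedge x_i}=\bigwedge_i m_{x_i}$. The inequality ``$\geq$'' is monotonicity from the previous paragraph; for ``$\leq$'', writing $x=\bigwedge x_i$, if $m_x\not\geq\bigwedge_i m_{x_i}$ I would separate off a clopen filter ${\uparrow}c$ containing $\bigwedge_i m_{x_i}$ but not $m_x$, note that each $R[x_i]\subseteq{\uparrow}c$, hence each $x_i\in[R]({\uparrow}c)$, and since $[R]({\uparrow}c)$ is a principal filter ${\uparrow}d$ this forces $x=\bigwedge x_i\geq d$, so $R[x]\subseteq{\uparrow}c$ and $m_x\geq c$ --- a contradiction. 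Granting the identity,~\eqref{it:pmlf-3} is immediate: if $(\bigwedge x_i)Rz$ then $z\geq m_{\bigwedge x_i}=\bigwedge_i m_{x_i}$, so $z_i:=m_{x_i}$ does the job. For~\eqref{it:pmlf-4} I would verify $(\bigwedge x_i)R(\bigwedge y_i)$ by tightness once more: the $[R]$-bullet reduces through monotonicity to each $y_i$ lying in the ambient ${\uparrow}c$, and the $\langle R\rangle$-bullet uses that each $x_i\in\langle R\rangle a$ together with the fact that a clopen (hence principal) filter is closed under arbitrary non-empty meets.

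I expect the crux to be the least-element lemma of the first paragraph together with clause~\eqref{it:pmlf-2}. The lemma is what turns the purely relational tightness axiom into order-theoretic data about $R[x]$, and it is reused everywhere; clause~\eqref{it:pmlf-2} is the one spot where closedness of $R[y]$ and a genuine compactness argument cannot be avoided, since it governs the ``downward'' (diamond) direction rather than the ``upward'' (box) one. The remaining manipulations are routine once these are in place, modulo the pervasive use of the fact that clopen filters are principal.
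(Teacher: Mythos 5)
Your proposal is correct and rests on essentially the same ingredients and arguments as the paper's proof: the tightness axiom to produce the least successor $m_x = \bigwedge R[x]$ with $xRm_x$ (the paper extracts this from clause~\eqref{it:pmlf-4}, which it proves first, while you prove it directly and defer~\eqref{it:pmlf-4} to the end), principality of clopen filters throughout, closedness of $R[y]$ plus compactness for clause~\eqref{it:pmlf-2}, and the HMS separation axiom for clause~\eqref{it:pmlf-3}; indeed your arguments for~\eqref{it:pmlf-2} and~\eqref{it:pmlf-4} coincide with the paper's. The one slip is purely notational: in your identity $m_{\bigwedge x_i} = \bigwedge_i m_{x_i}$ the direction labels are swapped --- monotonicity gives ``$\leq$'' and the separation-plus-principality argument gives ``$\geq$'' --- but both directions are in fact proven by the arguments you give.
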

\begin{proof}
  We know that L-spaces have all non-empty meets, and hence also binary
  joins, so \eqref{it:pmlf-0} is satisfied.
  Furthermore,~\eqref{it:mlf-6} is satisfied by definition.
  We verify the other conditions from Definition~\ref{def:modal-l-frame},
  starting with \eqref{it:pmlf-4}.

  \medskip\noindent
  {\it Condition \ref{it:pmlf-4}.} \
  Suppose $x_iRy_i$, where $i$ ranges over some index set $I$.
  If $I = \emptyset$ then this condition states $\ftop R \ftop$ which holds by definition,
  so assume that $I \neq \emptyset$.
  By the tightness condition of modal L-spaces, in order to prove
  $(\bigfmeet x_i)R(\bigfmeet y_i)$ it suffices to show that for all
  clopen filters $a$, $\bigfmeet x_i \in [R]a$ implies $\bigfmeet y_i \in a$
  and $\bigfmeet y_i \in a$ implies $\bigfmeet x_i \in \langle R \rangle a$.
  
  First assume $\bigfmeet x_i \in [R]a$. Since $[R]a$ is a clopen filter
  and $\bigfmeet x_i \fleq x_j$ for each $j \in I$ we have
  $x_j \in [R]a$, so $R[x_j] \subseteq a$. By assumption $x_jRy_j$,
  so $y_j \in a$ for all $j \in I$. Since $a$ is a clopen filter it is
  principal, hence $\bigfmeet y_i \in a$.
  Second, suppose $\bigfmeet y_i \in a$.
  Then $y_j \in a$ for all $j \in I$,
  which implies $x_j \in \langle R \rangle a$ for all $j \in I$.
  Since $\langle R \rangle a$ is a clopen filter, hence principal,
  we find $\bigfmeet x_i \in \langle R \rangle a$.
  
  \medskip\noindent
  {\it Condition \ref{it:mlf-1}.} \
  Let $x \fleq y$ and $yRz$.
  Suppose towards a contradiction that there exists no $w \in X$
  such that $xRw$ and $w \fleq z$.
  Let $x' = \bigfmeet R[x]$ be the minimal element in $R[x]$
  (which is an $R$-successor of $x$ by \eqref{it:pmlf-4}).
  Then $x' \not\fleq z$, so we can find a clopen filter $a$ containing
  $x'$ such that $z \notin a$. This implies $R[x] \subseteq a$,
  so that $x \in [R]a$, but $y \notin [R]a$ because
  $yRz$ and $z \notin a$. As $x \fleq y$ this violates the
  fact that $[R]a$ is a filter.

  \medskip\noindent
  {\it Condition \ref{it:mlf-2}.} \
  Let $x \fleq y$ and $xRw$. Suppose towards a contradiction that
  there exists no $z \in X$ such that $yRz$ and $w \fleq z$.
  Then $R[y] \cap {\uparrow}w = \emptyset$. Both $R[y]$ and
  ${\uparrow}z$ are closed, as a consequence of Lemmas~\ref{lem:mls-succ-closed}
  and~\ref{lem:closed-upward-closure} and the fact that singletons in
  a Stone spaces are always closed.
  Therefore we can find a clopen filter $a$ containing ${\uparrow}z$
  which is disjoint from $R[y]$. This implies that
  $x \in \langle R \rangle a$ while $y \notin \langle R \rangle a$.
  Since $x \fleq y$ this contradicts the fact that $\langle R \rangle a$
  is a filter.
  
  \medskip\noindent
  {\it Condition \ref{it:pmlf-3}.} \
  Finally, let $\{ x_i \mid i \in I \}$ be some collection of
  elements of $X$ and suppose $(\bigfmeet x_i)Rz$.
  If $I$ is empty then $\bigfmeet x_i = \ftop$ and
  Definition~\ref{def:mls}\eqref{it:mls-1b} implies $z = \ftop$,
  and the empty collection witnesses truth of~\eqref{it:pmlf-3}.
  So assume $I \neq \emptyset$.
  Since $\bigfmeet x_i \fleq x_j$ for all $j \in I$,
  condition \eqref{it:mlf-2} implies that $R[x_j] \neq \emptyset$ for all
  $j \in I$. As a consequence of \eqref{it:pmlf-4} there is a smallest
  element $z_j := \bigfmeet R[x_j]$ in each $R[x_j]$.
  We claim that $\bigfmeet z_j \leq z$.
  Suppose not, then there is a clopen filter $a$ containing $\bigfmeet z_j$
  such that $z \notin a$. This implies $x_j \in [R]a$ for all $j \in I$,
  but $\bigfmeet x_i \notin [R]a$ because $(\bigfmeet x_i)Rz$ and $z \notin a$.
  But this contradicts the fact that $[R]a$ is principal filter.
\end{proof}

  This proposition motivates the following definition.

\begin{definition}
  Let $\topo{X} = (X, \ftop, \fmeet, \tau, R)$ be a modal L-space.
  Then we write $\pi\topo{X} = (X, \ftop, \fmeet, R)$ for the underlying
  principal modal L-frame, and $\kappa\topo{X} = (X, \ftop, \fmeet, R)$ for the
  underlying principal modal L-frame regarded as a modal L-frame.
\end{definition}

  While they may appear the same, the difference between $\pi\topo{X}$ and
  $\kappa\topo{X}$ lies in the valuations they allow for. While valuations of
  $\pi\topo{X}$ necessarily interpret proposition letters as principal filters,
  a valuation for $\kappa\topo{X}$ can assign any filter to a proposition letter.
  As a consequence, both frames differ in terms of \emph{validity}.

  For future reference, we prove the following lemma about
  modal L-spaces. It states that the action of $[R]$ and $\langle R \rangle$
  on any filter is determined by their action on clopen filters.

\begin{lemma}\label{lem:Rc}
  Let $\topo{X} = (X, \ftop, \fmeet, \tau, R)$ be a modal L-space.
  The for every closed filter $c \in \kfil(\topo{X})$, 
  \begin{equation}\label{eq:Rc1}
    \rbox{R} c
      = \bigcap \big\{ \rbox{R} a \mid a \in \cfil(\topo{X}),
         c \subseteq a \big\} 
    \quad\text{and}\quad
    \rdiamond{R} c
      = \bigcap \big\{ \rdiamond{R} a \mid a \in \cfil(\topo{X}), c \subseteq a \big\}.
  \end{equation}
  Furthermore, for every filter $d \in \fil(\topo{X})$ we have
  \begin{equation}\label{eq:Rc3}
    \rbox{R}d 
      = \bigGenFil \big\{ \rbox{R}c \mid c \in \kfil(\topo{X}), c \subseteq d \big\}
    \quad\text{and}\quad
    \rdiamond{R}d 
      = \bigGenFil \big\{ \rdiamond{R}c \mid c \in \kfil(\topo{X}), c \subseteq d \big\}.
  \end{equation}
\end{lemma}
\begin{proof}
  The left-to-right inclusion of the first equality follows from the fact
  that $c \subseteq a$ implies $\rbox{R}c \subseteq \rbox{R}a$.
  For the converse, suppose $x \in \topo{X}$ is such that
  $x \notin \rbox{R}c$. Then $R[x] \not\subseteq c$, so there is some
  $y \in R[x]$ such that $y \notin c$.
  Since $c$ is a closed filter, it is the intersection of all clopen filters
  that contain it. A routine compactness argument yields a clopen
  filter $b$ containing $c$ such that $y \notin b$.
  Therefore $R[x] \not\subseteq b$, so $x \notin \rbox{R}b$
  and hence $x$ is not in the right hand side of the equality.
  
  For the second equality, it suffices to prove the right-to-left inclusion as well.
  Suppose $x \notin \rdiamond{R}c$. Then $R[x] \cap c = \emptyset$.
  Since $R[x]$ is closed, we can use compactness to find some clopen filter $a$
  containing $c$ disjoint from $R[x]$, and the argument proceeds as above.
  
  Next, consider~\eqref{eq:Rc3}.
  We know that $c \subseteq d$ implies $\rbox{R}c \subseteq \rbox{R}d$,
  and since the latter is a filter containing $\rbox{R}c$
  for all closed filters $c \subseteq d$, it contains the filter generated
  by them. This proves $\supseteq$.
  Conversely, suppose $x \in \rbox{R}d$.
  Then $R[x] \subseteq d$.
  Since $d$ is up-closed we also have $\uparr R[x] \subseteq d$.
  The set $\uparr R[x]$ is closed by Lemmas~\ref{lem:mls-succ-closed}
  and~\ref{lem:closed-upward-closure},
  and it is a filter because $R[x]$ is closed under $\fmeet$.
  Thus we have found a closed filter $c := \uparr R[x]$ contained in $d$ such that
  $x \in \rbox{R}c$. This proves~$\subseteq$.
  
  The right-to-left inclusion of the second equality in~\eqref{eq:Rc3}
  is easy again. For $\subseteq$,
  suppose $x \in \rdiamond{R}d$. Then $R[x] \cap d \neq \emptyset$,
  so we can find some $y \in R[x] \cap d$.
  Now $\uparr y$ is a closed filter contained in $d$
  such that $x \in \rdiamond{R}(\uparr y)$, witnessing $\subseteq$.
\end{proof}

  Since we know that each modal L-space has an underlying modal L-frame,
  we can now conveniently define the morphisms between them as follows.

\begin{definition}
  A \emph{modal L-space morphism} from $(X, \ftop, \fmeet, \tau, R)$ to
  $(X', \ftop', \fmeet', \tau', R')$ is a function $f : X \to X'$ such that
  $f : (X, \ftop, \fmeet, \tau) \to (X', \ftop', \fmeet', \tau')$ is an L-space morphism
  and $f : (X, \ftop, \fmeet, R) \to (X', \ftop', \fmeet', R')$ is a bounded L-morphism.
  We denote the resulting category by $\cat{MLSpace}$.
\end{definition}

  We work towards a duality between modal lattices and modal L-spaces.

\begin{proposition}
  If $\topo{X} = (X, \ftop, \fmeet, \tau, R)$ is a modal L-space
  then $(\cfil\topo{X}, [R], \langle R \rangle)$ is a modal lattice.
  Moreover, if $f : \topo{X} \to \topo{X}'$ is a modal L-space morphism,
  then
  $$
    \cfil f = f^{-1}
      : (\cfil\topo{X}', [R'], \langle R' \rangle)
        \to (\cfil\topo{X}, [R], \langle R \rangle)
  $$
  is a modal lattice homomorphism.
\end{proposition}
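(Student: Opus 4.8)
The plan is to verify the two claims separately, leaning on the results already established for L-spaces and on the bounded L-morphism conditions.

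For the first claim, I would begin by observing that condition~\eqref{it:mls-2} guarantees that $[R]$ and $\langle R \rangle$ restrict to well-defined maps $\cfil\topo{X} \to \cfil\topo{X}$, and that $\cfil\topo{X}$ is already a bounded lattice by Theorem~\ref{thm:duality-lat-lspace}. It then remains to check the (in)equalities of Definition~\ref{def:modal-lat}, where $[R]$ plays the role of $\Box$ and $\langle R \rangle$ that of $\Diamond$. These are precisely the algebraic counterparts of the consequence pairs shown valid in Lemma~\ref{lem:some-val}, and since Proposition~\ref{prop:mls-mlf} tells us the underlying structure is a modal L-frame and the join of two clopen filters in $\cfil\topo{X}$ coincides with $\curlyvee$, they all hold. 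Concretely I would dispatch them by direct computation: $[R]X = X$ and $[R](a \cap b) = [R]a \cap [R]b$ are immediate from $R[x] \subseteq a \cap b$ iff $R[x] \subseteq a$ and $R[x] \subseteq b$; $\langle R \rangle X = X$ uses seriality~\eqref{it:mls-1b}; $\langle R \rangle \emptyset = \emptyset$ is trivial; $\langle R \rangle a \subseteq \langle R \rangle (a \curlyvee b)$ follows from $a \subseteq a \curlyvee b$ and monotonicity of $\langle R \rangle$; and the duality inequality $\langle R \rangle a \cap [R]b \subseteq \langle R \rangle(a \cap b)$ holds because any $y \in R[x] \cap a$ with $R[x] \subseteq b$ already lies in $a \cap b$.

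For the second claim, I would first recall that $f^{-1}$ is already a lattice homomorphism $\cfil\topo{X}' \to \cfil\topo{X}$ by Theorem~\ref{thm:duality-lat-lspace}: continuity of $f$ sends clopen filters to clopen filters, and $f$ being an L-morphism makes $f^{-1}$ preserve $\cap$ and $\curlyvee$. The only remaining task is to show that $f^{-1}$ commutes with the two modal operators, i.e.\ $f^{-1}([R']a') = [R]f^{-1}(a')$ and $f^{-1}(\langle R' \rangle a') = \langle R \rangle f^{-1}(a')$ for every clopen filter $a'$ of $\topo{X}'$. Unwinding definitions, $x \in f^{-1}([R']a')$ says $R'[f(x)] \subseteq a'$ while $x \in [R]f^{-1}(a')$ says $f(R[x]) \subseteq a'$; the forward inclusion uses condition~\eqref{it:mlfm-1}, and the backward inclusion uses condition~\eqref{it:mlfm-2} together with upward closure of $a'$. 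Symmetrically, $x \in f^{-1}(\langle R' \rangle a')$ says $R'[f(x)] \cap a' \neq \emptyset$ and $x \in \langle R \rangle f^{-1}(a')$ says $R[x] \cap f^{-1}(a') \neq \emptyset$; here the forward direction needs condition~\eqref{it:mlfm-3} plus upward closure of $a'$, and the backward direction needs~\eqref{it:mlfm-1}.

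The arguments are all routine, so there is no serious obstacle; the only genuinely load-bearing steps are the two inclusions consuming conditions~\eqref{it:mlfm-2} and~\eqref{it:mlfm-3}, where one must exploit that the interpretants $a'$ are filters (hence upward closed) to transport membership along the inequalities $f(z) \leq' z'$ and $z' \leq' f(w)$ supplied by those conditions. This is the single point where the filter semantics, rather than mere subset semantics, is essential; everything else is bookkeeping with the definitions of $[R]$ and $\langle R \rangle$.
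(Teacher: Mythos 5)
Your proposal is correct and follows essentially the same route as the paper's proof: the modal-lattice axioms are exactly the validities of Lemma~\ref{lem:some-val} applied via Proposition~\ref{prop:mls-mlf} (your direct computations are precisely what that lemma's proof amounts to), and the preservation of $[R]$ and $\langle R \rangle$ by $f^{-1}$ is the same argument, using conditions~\eqref{it:mlfm-1}--\eqref{it:mlfm-3} and upward closure, that the paper invokes by reference to Proposition~\ref{prop:mor-pres-truth-modal}. No gaps; you have merely written out explicitly what the paper handles by citation.
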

\begin{proof}
  The maps $[R], \langle R \rangle$ are functions on $\cfil\topo{X}$
  by definition.
  It follows from Proposition~\ref{prop:mls-mlf} and
  Lemma~\ref{lem:some-val} that they satisfy the conditions from
  Definition~\ref{def:modal-lat}.
  
  If $f$ is a modal L-space morphism then in particular it is an
  L-space morphism, so $\cfil f$ is a lattice homomorphism
  from $\cfil\topo{X}'$ to $\cfil\topo{X}$.
  So we only have to show that $f^{-1}$ preserves the modalities.
  This can be proven in the same way as in
  Proposition~\ref{prop:mor-pres-truth-modal}.
\end{proof}

  We now show how to turn a modal lattice into a modal L-space.

\begin{definition}
  Let $\amo{A} = (A, \Box, \Diamond)$ be a modal lattice.
  Then we define the binary relation $R_A$ on $\bfil A$ by
  $$
    p R_A q \iff \Box^{-1}(p) \subseteq q \subseteq \Diamond^{-1}(p).
  $$
\end{definition}

\begin{lemma}\label{lem:modal-lat-2}
  Let $(A, \Box, \Diamond)$ be a modal lattice and $p \in \fil A$.
  Then $\Box^{-1}(p)$ is a filter in $\fil A$ and $pR_A\Box^{-1}(p)$.
\end{lemma}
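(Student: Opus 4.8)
The plan is to verify the two assertions separately, both resting on a pair of derived inequalities that follow from the duality axiom of Definition~\ref{def:modal-lat} together with seriality. Throughout, recall that $\Box^{-1}(p) = \{ a \in A \mid \Box a \in p \}$ and that $p \in \bfil A$ is a non-empty proper filter, hence contains $\top$ but not $\bot$.

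First I would show that $\Box^{-1}(p)$ is a non-empty proper filter. Upward closure follows from monotonicity of $\Box$: since $\Box(a \wedge b) = \Box a \wedge \Box b$, whenever $a \leq b$ (so $a \wedge b = a$) we get $\Box a = \Box(a \wedge b) = \Box a \wedge \Box b \leq \Box b$, and as $p$ is up-closed, $\Box a \in p$ forces $\Box b \in p$. Closure under meets is immediate from $\Box(a \wedge b) = \Box a \wedge \Box b$ and the fact that $p$ is closed under meets. Non-emptiness holds because $\top = \Box\top \in p$. The one genuinely non-trivial point is properness, for which I would first derive $\Box\bot = \bot$: instantiating the duality axiom $\Diamond a \wedge \Box b \leq \Diamond(a \wedge b)$ at $a = \top$, $b = \bot$ gives $\Diamond\top \wedge \Box\bot \leq \Diamond\bot = \bot$, and since $\Diamond\top = \top$ this collapses to $\Box\bot \leq \bot$, i.e.~$\Box\bot = \bot$. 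As $p$ is proper, $\bot \notin p$, so $\bot = \Box\bot$ is not in $p$ and therefore $\bot \notin \Box^{-1}(p)$; thus $\Box^{-1}(p) \neq A$ and $\Box^{-1}(p) \in \bfil A$.

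For the relation $p R_A \Box^{-1}(p)$, unfolding the definition of $R_A$ requires $\Box^{-1}(p) \subseteq \Box^{-1}(p) \subseteq \Diamond^{-1}(p)$. The first inclusion is trivial. For the second I would establish $\Box a \leq \Diamond a$ for all $a \in A$: instantiating the duality axiom at $a = \top$ yields $\Diamond\top \wedge \Box b \leq \Diamond(\top \wedge b) = \Diamond b$, and using $\Diamond\top = \top$ gives $\Box b \leq \Diamond b$. Consequently, if $\Box a \in p$ then $\Diamond a \in p$ by upward closure of $p$, so $a \in \Diamond^{-1}(p)$, proving $\Box^{-1}(p) \subseteq \Diamond^{-1}(p)$ and hence $p R_A \Box^{-1}(p)$. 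The main subtlety, rather than any real obstacle, is recognizing that seriality (encoded as $\Diamond\top = \top$) is exactly what lets one extract both $\Box\bot = \bot$ and $\Box a \leq \Diamond a$ from the single duality axiom; this is the point flagged in Remark~\ref{rem:Dunn} where seriality is needed even though Dunn does not assume it.
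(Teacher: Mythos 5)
Your proof is correct and takes essentially the same route as the paper's: both obtain properness of $\Box^{-1}(p)$ and the inclusion $\Box^{-1}(p) \subseteq \Diamond^{-1}(p)$ by instantiating the duality axiom $\Diamond a \wedge \Box b \leq \Diamond(a \wedge b)$ at $\top$ and $\bot$ together with seriality $\Diamond\top = \top$. The only difference is presentational: you extract the global inequalities $\Box\bot = \bot$ and $\Box a \leq \Diamond a$ first and then apply them, whereas the paper performs the same instantiations directly inside the filter $p$.
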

\begin{proof}
  The set $\Box^{-1}(p)$ is a filter because $\Box : A \to A$ preserves
  conjunctions and the top element.
  To show $pR_A\Box^{-1}(p)$ we need to prove
  $\Box^{-1}(p) \subseteq \Box^{-1}(p) \subseteq \Diamond^{-1}(p)$.
  The left inclusion is trivial.
  For the right one, suppose $a \in \Box^{-1}(p)$.
  Then $\Box a \in p$. Besides, $\Diamond\top = \top \in p$.
  So the duality axiom implies
  $\Diamond\top \wedge \Box a \leq \Diamond(\top \wedge a) = \Diamond a \in p$,
  hence $a \in \Diamond^{-1}(p)$.
\end{proof}

\begin{lemma}\label{lem:modal-lat-3}
  Let $(A, \Box, \Diamond)$ be a modal lattice. Then for each
  $a \in A$ we have
  $$
    [R_A]\theta_A(a) = \theta_A(\Box a)
    \quad\text{and}\quad
    \langle R_A \rangle\theta_A(a) = \theta_A(\Diamond a).
  $$
\end{lemma}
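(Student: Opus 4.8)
The plan is to establish each of the two equalities by double inclusion, handling the $\Box$-identity first as it is the more routine of the two. For the inclusion $\theta_A(\Box a) \subseteq [R_A]\theta_A(a)$, I would assume $\Box a \in p$ and take an arbitrary $R_A$-successor $q$ of $p$; by definition of $R_A$ we have $\Box^{-1}(p) \subseteq q$, and since $\Box a \in p$ means $a \in \Box^{-1}(p)$, this forces $a \in q$. Hence $R_A[p] \subseteq \theta_A(a)$, i.e.\ $p \in [R_A]\theta_A(a)$. For the reverse inclusion I would invoke Lemma~\ref{lem:modal-lat-2}, which supplies the concrete successor $p R_A \Box^{-1}(p)$: if $p \in [R_A]\theta_A(a)$ then in particular $\Box^{-1}(p) \in \theta_A(a)$, that is $a \in \Box^{-1}(p)$, which is exactly $\Box a \in p$. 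Note that seriality enters here only through the cited lemma (whose properness claim for $\Box^{-1}(p)$ uses $\Diamond\top = \top$).

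The easy half of the $\Diamond$-identity, $\langle R_A \rangle \theta_A(a) \subseteq \theta_A(\Diamond a)$, proceeds dually: a witness $q$ with $p R_A q$ and $a \in q$ gives $a \in q \subseteq \Diamond^{-1}(p)$, hence $\Diamond a \in p$. The hard part will be the opposite inclusion $\theta_A(\Diamond a) \subseteq \langle R_A \rangle \theta_A(a)$, where from $\Diamond a \in p$ I must manufacture a witnessing successor filter. The natural candidate is $q := {\uparrow}\{ c \wedge a \mid c \in \Box^{-1}(p) \}$, the filter generated by $\Box^{-1}(p) \cup \{ a \}$; it is closed under meets because $\Box^{-1}(p)$ is (Lemma~\ref{lem:modal-lat-2}), it is non-empty, and it visibly contains both $a$ and all of $\Box^{-1}(p)$, so $\Box^{-1}(p) \subseteq q$ is immediate.

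Two checks remain, and both rest on the duality axiom $\Diamond x \wedge \Box y \leq \Diamond(x \wedge y)$ together with $\Diamond\bot = \bot$; this is the crux of the argument. First, $q$ must be proper: if $\bot \in q$ then $c \wedge a = \bot$ for some $c \in \Box^{-1}(p)$, so $\Diamond a \wedge \Box c \leq \Diamond(a \wedge c) = \Diamond\bot = \bot$, while $\Diamond a, \Box c \in p$ give $\Diamond a \wedge \Box c \in p$, forcing $\bot \in p$ and contradicting properness of $p$. Second, $q \subseteq \Diamond^{-1}(p)$: for $b \in q$ choose $c \in \Box^{-1}(p)$ with $c \wedge a \leq b$; then $\Diamond a \wedge \Box c \leq \Diamond(a \wedge c) \in p$, and monotonicity of $\Diamond$ (which follows from the axiom $\Diamond x \leq \Diamond(x \vee y)$) upgrades $a \wedge c \leq b$ to $\Diamond(a \wedge c) \leq \Diamond b$, whence $\Diamond b \in p$. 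Together these give $\Box^{-1}(p) \subseteq q \subseteq \Diamond^{-1}(p)$, i.e.\ $p R_A q$, and since $a \in q$ we conclude $p \in \langle R_A \rangle \theta_A(a)$. The main obstacle is thus purely in this last inclusion: recognising that the single duality axiom is exactly what is needed both to keep the generated filter $q$ proper and to trap it below $\Diamond^{-1}(p)$.
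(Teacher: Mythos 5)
Your proof is correct and follows essentially the same route as the paper: the $\Box$-identity via Lemma~\ref{lem:modal-lat-2}, and the hard inclusion $\theta_A(\Diamond a) \subseteq \langle R_A \rangle \theta_A(a)$ by taking $q$ to be the filter generated by $\Box^{-1}(p)$ and $a$, then using the duality axiom together with $\Diamond\bot = \bot$ to show $q$ is proper and $q \subseteq \Diamond^{-1}(p)$. The only cosmetic difference is that you verify properness by a direct special-case computation, where the paper derives it from the general claim that $c \in q$ implies $\Diamond c \in p$; the substance is identical.
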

\begin{proof}
  Suppose $p \in [R_A]\theta_A(a)$.
  By Lemma~\ref{lem:modal-lat-2} we have $pR_A\Box^{-1}(p)$ and
  by assumption $a \in \Box^{-1}(p)$. This implies that $\Box a \in p$
  and therefore $p \in \theta_A(\Box a)$.
  For the reverse inclusion, suppose $p \in \theta_A(\Box a)$.
  Then $a \in \Box^{-1}(p)$, so $pR_Aq$ implies $a \in q$,
  and hence $p \in [R_A]\theta_A(a)$.
  
  Next, suppose $p \in \langle R_A \rangle\theta_A(a)$.
  Then there exists a filter $q$ such that $pR_Aq$ and $a \in q$.
  By definition of $R_A$ this implies $a \in \Diamond^{-1}(p)$ and hence
  $\Diamond a \in p$, so $p \in \theta_A(\Diamond a)$.
  Conversely, suppose $p \in \theta_A(\Diamond a)$.
  Let $q$ be the filter generated by $\Box^{-1}(p)$ and $a$.
  We claim that $c \in q$ implies $\Diamond c \in p$.
  To see this, note that for each $c \in q$ there exists some $d \in \Box^{-1}(p)$
  such that $d \wedge a \leq c$.
  We then have $\Box d \in p$, and $\Diamond a \in p$ by assumption,
  so $\Box d \wedge \Diamond a \in p$.
  Since $\Box d \wedge \Diamond a \leq \Diamond(d \wedge a) \leq \Diamond c$
  we find $\Diamond c \in p$.
  The filter $q$ is nonempty because it contains $\top$.
  Furthermore, we have $\Box^{-1}(p) \subseteq q$ by definition of $q$
  and we just showed that $c \in q$ implies $\Diamond c \in p$,
  so that $q \subseteq \Diamond^{-1}(p)$.
  This proves $pR_Aq$.
  By design $a \in q$ so $q$ witnesses the fact that
  $p \in \langle R_A \rangle \theta_A(a)$.
\end{proof}

\begin{lemma}
  If $\amo{A} = (A, \Box, \Diamond)$ is a modal lattice, then
  $\amo{A}_* := (\fil A, A, \cap, \tau_A, R_A)$ is a modal L-space.
\end{lemma}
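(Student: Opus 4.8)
The plan is to verify the four defining conditions of a modal L-space from Definition~\ref{def:mls} one at a time, observing that the substantial work has already been carried out in Lemmas~\ref{lem:modal-lat-2} and~\ref{lem:modal-lat-3}, so that this statement is largely an assembly step. The underlying structure~\eqref{it:mls-1} comes for free: the tuple $(\bfil A, \subseteq, \tau_A)$ is exactly the L-space $\tfil A$ dual to the lattice $A$, which is an L-space by Theorem~\ref{thm:duality-lat-lspace}. For seriality~\eqref{it:mls-1b} I would invoke Lemma~\ref{lem:modal-lat-2}: given $p \in \bfil A$, it produces the filter $\Box^{-1}(p) \in \bfil A$ together with the witness $p R_A \Box^{-1}(p)$, so every point has an $R_A$-successor.

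For condition~\eqref{it:mls-2} I would first recall, using Theorem~\ref{thm:mspace-bsl}, that the clopen filters of $\tfil A$ are precisely the sets $\theta_A(a)$ for $a \in A$. Then Lemma~\ref{lem:modal-lat-3}, which states that $[R_A]\theta_A(a) = \theta_A(\Box a)$ and $\langle R_A \rangle \theta_A(a) = \theta_A(\Diamond a)$, shows immediately that $[R_A]$ and $\langle R_A \rangle$ map clopen filters to clopen filters.

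The remaining condition, tightness~\eqref{it:mls-tight}, is the only place where the concrete definition of $R_A$ must be matched against the abstract tightness clause, and I expect this to be the single step needing genuine unwinding. Again using that the clopen filters are the $\theta_A(a)$, I would rewrite the two bullet points of~\eqref{it:mls-tight} via Lemma~\ref{lem:modal-lat-3}: $p \in [R_A]\theta_A(a)$ iff $\Box a \in p$, the conclusion $q \in \theta_A(a)$ iff $a \in q$, and $p \in \langle R_A \rangle \theta_A(a)$ iff $\Diamond a \in p$. The first bullet then reads ``$\Box a \in p$ implies $a \in q$'' for all $a$, i.e. $\Box^{-1}(p) \subseteq q$, and the second reads ``$a \in q$ implies $\Diamond a \in p$'' for all $a$, i.e. $q \subseteq \Diamond^{-1}(p)$. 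The conjunction of these is exactly the defining condition $\Box^{-1}(p) \subseteq q \subseteq \Diamond^{-1}(p)$ of $p R_A q$, so the biconditional of~\eqref{it:mls-tight} holds by definition once the translation is made.

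Thus the proof is essentially bookkeeping, and the only (modest) obstacle is to keep the two inclusions aligned with their respective bullet points when reformulating tightness; the substitution via Lemma~\ref{lem:modal-lat-3} handles this cleanly, and no further argument beyond these reformulations and the cited lemmas is required.
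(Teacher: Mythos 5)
Your proposal is correct and follows essentially the same route as the paper's proof: condition (M$_1$)-type structure via Theorem~\ref{thm:duality-lat-lspace}, seriality via Lemma~\ref{lem:modal-lat-2}, closure of clopen filters under $[R_A]$ and $\langle R_A\rangle$ via Lemma~\ref{lem:modal-lat-3}, and tightness by unwinding the definition of $R_A$ against the fact that every clopen filter is of the form $\theta_A(a)$. The only difference is that you spell out the tightness verification explicitly (routing it through Lemma~\ref{lem:modal-lat-3}), where the paper leaves it as a one-line remark; this is a harmless elaboration, not a different argument.
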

\begin{proof}
  We verify the condition from Definition~\ref{def:mls}.
  Item \eqref{it:mls-1} follows from Theorem~\ref{thm:duality-lat-lspace}.
  Item \eqref{it:mls-1b} follows from Lemma~\ref{lem:modal-lat-2}.
  Item \eqref{it:mls-2} follows from Lemma~\ref{lem:modal-lat-3}.
  Item \eqref{it:mls-3} follows from the definition
  of $R_A$ and the fact that each clopen filter is of the form
  $\theta_A(a)$.
\end{proof}

\begin{lemma}
  Let $h : \amo{A} \to \amo{A}'$ be a modal lattice homomorphism.
  Then $\fil h = h^{-1} : \amo{A}'_* \to \amo{A}_*$ is a bounded
  L-space morphism.
\end{lemma}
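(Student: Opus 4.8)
The plan is to split the claim along the definition of a modal L-space morphism: the underlying map $\bfil h = h^{-1}$ must be an L-space morphism, and it must satisfy the three conditions of a bounded L-morphism (Definition~\ref{def:mlfm}) relating the relations $R_{A'}$ and $R_A$. The first half comes for free: the L-space underlying $\amo{A}_*$ is exactly $\tfil A$, so by the lattice duality of Theorem~\ref{thm:duality-lat-lspace} the map $h^{-1} = \tfil h$ is already an L-space morphism $\tfil A' \to \tfil A$. Hence all the work is in checking conditions \eqref{it:mlfm-1}--\eqref{it:mlfm-3} for $f = h^{-1} : \bfil A' \to \bfil A$, and throughout I would use that $h$ is a modal homomorphism, i.e.\ $h(\Box a) = \Box' h(a)$ and $h(\Diamond a) = \Diamond' h(a)$.

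For the forward condition \eqref{it:mlfm-1}, I would take $p', q' \in \bfil A'$ with $p' R_{A'} q'$, meaning $(\Box')^{-1}(p') \subseteq q' \subseteq (\Diamond')^{-1}(p')$, and verify directly that $\Box^{-1}(h^{-1}(p')) \subseteq h^{-1}(q') \subseteq \Diamond^{-1}(h^{-1}(p'))$, which is exactly $h^{-1}(p') R_A h^{-1}(q')$. Both inclusions are one-line computations: from $\Box a \in h^{-1}(p')$ we get $\Box' h(a) \in p'$, so $h(a) \in (\Box')^{-1}(p') \subseteq q'$, i.e.\ $a \in h^{-1}(q')$; symmetrically, $a \in h^{-1}(q')$ gives $h(a) \in q' \subseteq (\Diamond')^{-1}(p')$, hence $\Diamond' h(a) = h(\Diamond a) \in p'$, i.e.\ $a \in \Diamond^{-1}(h^{-1}(p'))$.

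The two ``back'' conditions carry the content. For \eqref{it:mlfm-2}, given $h^{-1}(p') R_A r$ I would take the minimal successor $q' := (\Box')^{-1}(p')$; by (the $A'$-instance of) Lemma~\ref{lem:modal-lat-2} this is a proper nonempty filter with $p' R_{A'} q'$, and a direct unwinding gives $h^{-1}(q') = \Box^{-1}(h^{-1}(p'))$, which lies inside $r$ precisely because $h^{-1}(p') R_A r$ yields $\Box^{-1}(h^{-1}(p')) \subseteq r$. For \eqref{it:mlfm-3} I would instead construct the successor from above: let $w'$ be the filter of $A'$ generated by $(\Box')^{-1}(p') \cup h[r]$, so that $h[r] \subseteq w'$ gives $r \subseteq h^{-1}(w')$ while $(\Box')^{-1}(p') \subseteq w'$ is automatic. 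It then remains to show $w'$ is proper and $w' \subseteq (\Diamond')^{-1}(p')$. Each $c \in w'$ dominates some $b \wedge h(a)$ with $b \in (\Box')^{-1}(p')$ and $a \in r$ (using that $h$ preserves $\wedge$, so $h[r]$ is meet-closed); since $r \subseteq \Diamond^{-1}(h^{-1}(p'))$ we have $\Diamond' h(a) \in p'$, and $\Box' b \in p'$, so the duality axiom $\Diamond' h(a) \wedge \Box' b \leq \Diamond'(h(a) \wedge b) \leq \Diamond' c$ forces $\Diamond' c \in p'$. This proves $w' \subseteq (\Diamond')^{-1}(p')$, and specializing to $c = b \wedge h(a) = \bot$ gives $\bot = \Diamond'\bot \in p'$, contradicting properness of $p'$, which yields properness of $w'$.

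I expect condition \eqref{it:mlfm-3} to be the main obstacle, since it is the only step requiring an existence-style filter construction together with the duality axiom and the boundary conditions $\Box'\top = \top$ (nonemptiness of $w'$) and $\Diamond'\bot = \bot$ (properness) — it mirrors the harder diamond half of Lemma~\ref{lem:modal-lat-3}. What makes all three conditions succeed is simply that $h$ intertwines the modalities; the leftover functoriality bookkeeping, that $\bfil$ is a contravariant functor $\cat{MLat} \to \cat{MLSpace}$, is then routine.
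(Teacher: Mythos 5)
Your proposal is correct and follows essentially the same route as the paper: condition \eqref{it:mlfm-1} by direct unwinding of $R_{A'}$ and the homomorphism equations, condition \eqref{it:mlfm-2} via the minimal successor $(\Box')^{-1}(p')$ and Lemma~\ref{lem:modal-lat-2}, and condition \eqref{it:mlfm-3} via the filter generated by $(\Box')^{-1}(p')$ together with $h[r]$, closed off with the duality axiom. The only (welcome) difference is that you make explicit the properness and non-emptiness of that generated filter, which the paper leaves implicit.
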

\begin{proof}
  It follows from the duality between lattices and L-spaces that
  $h^{-1}$ is an L-space morphism, so we only have to verify the
  three conditions from Definition~\ref{def:mlfm}.
  We write $R'$ and $R$ for the relations from $\amo{A}'_*$ and
  $\amo{A}_*$.
  
  \begin{enumerate}
    \item Let $p'$ and $q'$ be filters of $\amo{A}'$ (elements of $\amo{A}'_*$)
          such that $p'R'q'$. In order to prove that
          $h^{-1}(p')Rh^{-1}(q')$ we have to show that
          $\Box^{-1}(h^{-1}(p')) \subseteq h^{-1}(q') \subseteq \Diamond^{-1}(h^{-1}(p'))$.
          Let $a \in \Box^{-1}(h^{-1}(p'))$.
          Then $\Box a \in h^{-1}(p')$ so $\Box'(h(a)) = h(\Box a) \in p'$.
          Therefore $h(a) \in (\Box')^{-1}(p')$, and since $p'R'q'$ this implies
          $h(a) \in q'$, so that $a \in h^{-1}(q')$.
          Next, if $a \in h^{-1}(q')$ then $h(a) \in q'$, so
          $h(\Diamond a) = \Diamond' h(a) \in p'$.
          Therefore $\Diamond a \in h^{-1}(p')$ so that $a \in \Diamond^{-1}(h^{-1}(p'))$.
    \item Suppose $h^{-1}(p')Rq$.
          %
          Lemma~\ref{lem:modal-lat-2} then implies that $p'R'(\Box')^{-1}(p')$.
          So it suffices to show that
          $h^{-1}((\Box')^{-1}(p')) \subseteq q$.
          To this end, suppose $a \in h^{-1}((\Box')^{-1}(p'))$.
          Then $h(\Box a) = \Box' h(a) \in p'$, so $\Box a \in h^{-1}(p')$.
          Since $h^{-1}(p')Rq$ this implies $a \in q$, as desired.
    \item Suppose $h^{-1}(p')Rq$. Then ${\uparrow}h[q]$ is a filter
          (since $h$ is a lattice homomorphism it is non-empty and closed under meets).
          Let $q'$ be the filter generated by ${\uparrow}h[q]$ and
          $(\Box')^{-1}(p')$. Then $q \subseteq h^{-1}(q')$ by construction,
          so it suffices to show that $p'R'q'$.
          We have $(\Box')^{-1}(p') \subseteq q'$ by definition,
          so we only have to show that $q' \subseteq (\Diamond')^{-1}(p')$.
          Let $a' \in q'$. Then there are $b' \in (\Box')^{-1}(p')$
          and $c \in q$ such that $b' \wedge' h(c) \leq a'$.
          We find
          $$
            \Box'b' \wedge h(\Diamond c) = \Box'b' \wedge \Diamond' h(c) \leq \Diamond a'.
          $$
          By construction we have $\Box'b' \in p'$.
          Furthermore, $c \in q$ implies $\Diamond c \in h^{-1}(p')$
          and hence $h(\Diamond c) \in p'$.
          Therefore $\Diamond a' \in p'$, and consequently
          $p'R'q'$.  \qedhere
  \end{enumerate}
\end{proof}

  Using the above lemmas we now establish a duality for modal lattices.

\begin{theorem}
  The duality between $\cat{Lat}$ and $\cat{LSpace}$ lifts to a duality
  $$
    \cat{MLat} \equiv^{\op} \cat{MLSpace}.
  $$
\end{theorem}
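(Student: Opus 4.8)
The plan is to observe that all the substantive computations have already been carried out in the preceding lemmas, so that the duality follows simply by checking that the unit and counit of the underlying duality $\cat{LSpace} \equiv^{\op} \cat{Lat}$ from Theorem~\ref{thm:duality-lat-lspace} respect the modal structure. The two contravariant functors are already in place: the assignment $\topo{X} \mapsto (\cfil\topo{X}, [R], \langle R \rangle)$ together with $\cfil f = f^{-1}$ gives a functor $\cat{MLSpace} \to \cat{MLat}$, and $\amo{A} \mapsto \amo{A}_*$ together with $\bfil h = h^{-1}$ gives a functor $\cat{MLat} \to \cat{MLSpace}$, by the lemmas just proved. It remains to promote the natural isomorphisms $\eta : \id_{\cat{LSpace}} \to \tfil\cfil$ and $\theta : \id_{\cat{Lat}} \to \cfil\tfil$, with components $\eta_{\topo{X}}(x) = \{ a \in \cfil\topo{X} \mid x \in a \}$ and $\theta_A(a) = \{ x \in \tfil A \mid a \in x \}$, to the modal setting.

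Since the forgetful functors to $\cat{LSpace}$ and $\cat{Lat}$ are faithful and leave the underlying functions of $\eta$ and $\theta$ untouched, naturality is inherited automatically; hence I only need to verify that each component is an isomorphism in the modal category. As both are already isomorphisms in the base category, this reduces to checking compatibility with the modalities (for $\theta_A$) and with the relation (for $\eta_{\topo{X}}$).

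For $\theta_A$ this is immediate from Lemma~\ref{lem:modal-lat-3}, which gives $\theta_A(\Box a) = [R_A]\theta_A(a)$ and $\theta_A(\Diamond a) = \langle R_A \rangle \theta_A(a)$; these equalities say precisely that $\theta_A$ commutes with the modal operators of $\cfil\tfil\amo{A} = (\cfil(\amo{A}_*), [R_A], \langle R_A \rangle)$, so $\theta_A$ is a modal-lattice isomorphism. For $\eta_{\topo{X}}$ I would spell out the relation $R_A$ on $\tfil\cfil\topo{X}$ associated with the modal lattice $A = (\cfil\topo{X}, [R], \langle R \rangle)$: unwinding the definition of $R_A$, the statement $\eta_{\topo{X}}(x) \mathrel{R_A} \eta_{\topo{X}}(y)$ holds iff for every clopen filter $a$ one has both $x \in [R]a \Rightarrow y \in a$ and $y \in a \Rightarrow x \in \langle R \rangle a$. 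This is exactly the tightness clause \eqref{it:mls-tight} of Definition~\ref{def:mls}, so $x \mathrel{R} y \iff \eta_{\topo{X}}(x) \mathrel{R_A} \eta_{\topo{X}}(y)$; that is, $\eta_{\topo{X}}$ preserves and reflects $R$. A bijection with this property, together with its inverse, satisfies the three conditions of Definition~\ref{def:mlfm} trivially, so $\eta_{\topo{X}}$ is a modal L-space isomorphism.

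Consequently $\eta$ and $\theta$ are natural isomorphisms between the lifted functors, which establishes $\cat{MLat} \equiv^{\op} \cat{MLSpace}$. The only non-formal point is the identification of the concrete relation $R_A$ on the bidual with the tightness condition, and this is precisely what \eqref{it:mls-tight} was engineered to provide; I therefore expect no genuine obstacle beyond carefully matching the two formulations of ``$\eta_{\topo{X}}$ preserves and reflects $R$''.
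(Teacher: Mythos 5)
Your proposal is correct and takes essentially the same route as the paper: the paper likewise reduces everything to the underlying duality of Theorem~\ref{thm:duality-lat-lspace}, verifies that $\theta_A$ respects the modalities via Lemma~\ref{lem:modal-lat-3}, and shows $xRy \iff \eta_{\topo{X}}(x)\,R_{\cfil\topo{X}}\,\eta_{\topo{X}}(y)$ by unwinding $R_{\cfil\topo{X}}$ into exactly the tightness clause \eqref{it:mls-tight}, with the morphism part delegated to concreteness of the categories. Your only cosmetic difference is phrasing this as lifting the unit and counit, which matches the paper's content step for step.
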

\begin{proof}
  Let $\topo{X} = (X, \leq, \tau, R)$ be a modal L-space.
  This gives rise to the modal lattice
  $\topo{X}^* = (\cfil \topo{X}, [R], \langle R \rangle)$,
  which in turn yields a modal L-space
  $(\topo{X}^*)_* = (\fil \cfil \topo{X}, \cfil\topo{X}, \cap, \tau_{\cfil\topo{X}}, R_{\cfil\topo{X}})$.
  As a consequence of the duality for lattices (Theorem~\ref{thm:duality-lat-lspace})
  we know that $(X, \ftop, \fmeet, \tau)$ is isomorphic to $(\bfil\cfil\topo{X}, A, \cap, \tau_{\cfil\topo{X}})$
  via $x \mapsto \eta_{\topo{X}}(x) = \{ a \in \cfil\topo{X} \mid x \in a \}$.
  So we only have to show that $R$ and $R_{\cfil\topo{X}}$ coincide.
  This can be seen as follows:
  \begin{align*}
    xRy
      &\iff \forall a \in \cfil\topo{X}
            \big( x \in [R]a \text{ implies } y \in a
                  \text{ and } y \in a \text{ implies } \langle R \rangle a \big) \\
      &\iff \forall a \in \cfil\topo{X}
            \big([R]a \in \eta_{\topo{X}}(x)
                 \text{ implies } a \in \eta_{\topo{X}}(y)
                 \text{ and } a \in \eta_{\topo{X}}(y)
                 \text{ implies } \langle R \rangle a \in \eta_{\topo{X}}(x) \big) \\
      &\iff \forall a \in \cfil\topo{X}
            \big( [R]^{-1}(\eta_{\topo{X}}(x)) \subseteq \eta_{\topo{X}}(y)
            \subseteq \langle R \rangle^{-1}(\eta_{\topo{X}}(x)) \big) \\
      &\iff \eta_{\topo{X}}(x) R_{\cfil\topo{X}} \eta_{\topo{X}}(y)
  \end{align*}
  
  Next, let $\amo{A} = (A, \Box, \Diamond)$ be a modal lattice,
  $\amo{A}_* = (\fil A, R_A)$ and
  $(\amo{A}_*)^* = (\cfil\fil A, [R_A], \langle R_A \rangle)$.
  Then the duality for lattices from Theorem~\ref{thm:duality-lat-lspace}
  tells us that $A$ and $\cfil\fil A$ are isomorphic via
  $a \mapsto \theta_A(a) = \{ p \in \fil A \mid a \in p \}$,
  so we just have to show that $\Box$ coincides with $[R_A]$ and
  $\Diamond$ coincides with $\langle R_A \rangle$.
  That is, we have to show that $\theta_A(\Box a) = [R_A]\theta_A(a)$ and
  $\theta_A(\Diamond a) = \langle R_A \rangle \theta_A(a)$.
  But we have already proven that in Lemma~\ref{lem:modal-lat-3}.
  
  The two paragraphs above establish the duality on objects.
  The duality for morphisms follows immediately from Theorem~\ref{thm:duality-lat-lspace}
  and the fact that all our categories are concrete.
\end{proof}

\subsection{Completions of modal lattices}\label{subsec:ext-modal-lat}


%

  We extend the $\Pi_1$-completion to modal lattices.
  Lemma~\ref{lem:Rc} suggests the following definition of completions of a modal lattice.

\begin{definition}
  Let $\ms{A} = (A, \Box, \Diamond)$ be a modal lattice.
  Let $i : A \to \FiltComp(A)$ be the filter completion of $A$,
  and $j : A \to \IdComp(A)$ the ideal completion.
  \begin{enumerate}
    \item The \emph{filter completion} of $\ms{A}$ is the modal lattice
          $\FiltComp(\ms{A}) = (\FiltComp(A), \Box^{\FiltComp}, \Diamond^{\FiltComp})$
          where
          \begin{align*}
            \Star^{\FiltComp} c
              &= \bigwedge \{ i(\Star a) \mid a \in A \text{ and } c \leq i(a) \}
                 \qquad\text{for } \Star \in \{ \Box, \Diamond \}.
          \end{align*}
    \item The \emph{ideal completion} of $\ms{A}$ is the modal lattice
          $\IdComp(\ms{A}) = (\IdComp(A), \Box^{\IdComp}, \Diamond^{\IdComp})$
          where
          \begin{align*}
            \Star^{\IdComp} c
              &= \bigvee \{ j(\Star a) \mid a \in A \text{ and } j(a) \leq c \}
                 \qquad\text{for } \Star \in \{ \Box, \Diamond \}.
          \end{align*}
    \item The \emph{$\Pi_1$-completion} of $\ms{A}$ is $\IdComp(\FiltComp(\ms{A}))$,
          i.e.~the composition of the filter and the ideal completion.
  \end{enumerate}
\end{definition}

  Concretely, if we view $\FiltComp(A)$ as sitting inside $\PiComp(A)$,
  then the \emph{$\Pi_1$-completion} of $\ms{A}$ is the modal
  lattice $\PiComp(\ms{A}) = (\PiComp(A), \Box^{\PiComp}, \Diamond^{\PiComp})$
  where
  \begin{equation}\label{eq:Pi1-comp}
    \Star^{\PiComp} d
      = \bigvee \{ \Star^{\FiltComp} c \mid c \in \FiltComp(A)
                                       \text{ and } c \leq d \}
         \qquad\text{for } \Star \in \{ \Box, \Diamond \}.
  \end{equation}
  
  Recall that the modal L-frame underlying a modal L-space is principal,
  so both the collection of closed filters as well as the collection of
  all filters form modal lattices.
  We already proved (for the duality result) that the function
  $\theta : \amo{A} \to (\amo{A}_*)^*$ satisfies $\theta(\Box a) = [R_A]\theta(a)$
  and $\theta(\Diamond a) = \langle R_A \rangle \theta(a)$,
  so we can use that.

\begin{proposition}\label{prop:mod-comp4}
  Let $\ms{A} = (A, \Box, \Diamond)$ be a modal lattice and
  $\topo{X}$ its dual modal L-space.
  Then:
  \begin{enumerate}
    \item $\FiltComp(\ms{A})$ is isomorphic to the modal lattice of
          closed filters of $\topo{X}$, i.e.~to $(\pi\topo{X})^{\ddagger}$;
    \item $\PiComp(\ms{A})$ is isomorphic to the modal lattice of
          filters of $\topo{X}$, i.e.~to $(\kappa\topo{X})^*$.
  \end{enumerate}
\end{proposition}
\begin{proof}
  Let $\topo{X} = (X, \ftop, \fmeet, \tau, R_A)$ be the modal L-space dual
  to $\amo{A}$. By Proposition~\ref{prop:lattice-completions} we can identify
  the filter extension of $A$ with the lattice of closed filters of
  $\topo{X}$ with inclusion $\theta : A \to \kfil(\topo{X})$.
  In $\kfil(\topo{X}$, the top, bottom and meet are given by $X$, $\{ \ftop \}$
  and intersection, and the join of a collection of filters is the smallest
  filter containing their union.
  So we only have to verify that for all $c \in \kfil(\topo{X})$ we
  have $\Box^{\FiltComp}c = \rbox{R_A}c$ and $\Diamond^{\FiltComp}c = \rdiamond{R_A}c$.
  Thus compute
  \begin{align*}
    \Box^{\FiltComp}c
      &= \bigcap \big\{ \theta(\Box a) \mid a \in A, c \subseteq \theta(a) \big\}
         &\text{(Definition of $\Box^{\FiltComp}$)} \\
      &= \bigcap \big\{ \rbox{R_A}\theta(a) \mid a \in A, c \subseteq \theta(a) \big\}
         &\text{(Lemma~\ref{lem:modal-lat-3})} \\
      &= \bigcap \big\{ \rbox{R_A}b \mid b \in \cfil(\topo{X}), c \subseteq b \big\}
         &\text{($\theta$ is an iso from $A$ to $\cfil(\topo{X})$)} \\
      &= \rbox{R_A}c.
         &\text{(Lemma~\ref{lem:Rc})}
  \end{align*}
  Similarly we find $\Diamond^{\FiltComp}c = \rdiamond{R_A}c$ for all
  $c \in \kfil(\topo{X})$.

  For the second item we adopt a similar strategy.
  Using Proposition~\ref{prop:lattice-completions},
  we identify the $\Pi_1$-completion of $A$ with $\fil(\topo{X})$,
  the filters of $\topo{X}$ with operators as in~\eqref{eq:Pi1-comp},
  and inclusion $\theta : A \to \fil(\topo{X})$.
  We view the filter completion of $A$ as sitting inside the
  $\Pi_1$-completion, just like $\kfil(\topo{X}) \subseteq \fil(\topo{X})$.
  Then we already know that $\Box^{\FiltComp}c = \rbox{R_A}c$ for all
  closed filters, wherefore
  $$
    \Box^{\PiComp}d
      = \bigGenFil \big\{ \Box^{\FiltComp}c \mid c \in \FiltComp(A), c \subseteq d \big\}
      = \bigGenFil \big\{ \rbox{R_A}c \mid c \in \kfil(\topo{X}), c \subseteq d \big\}
      = \rbox{R}d
  $$
  for all filters $d$. We use Lemma~\ref{lem:Rc} for the last equality.
  Similarly we find $\Diamond^{\PiComp}d = \rdiamond{R_A}d$.
\end{proof}

  Proposition~\ref{prop:mod-comp4} immediately implies:

\begin{corollary}
  The filter and $\Pi_1$-completion of a modal lattice are
  modal lattices themselves.
\end{corollary}

  Next, we work towards a preservation theorem.

\begin{theorem}\label{thm:mod-pres-filt}
  Let $\amo{A} = (A, \Box, \Diamond)$ be a modal lattice and
  $\phi, \psi \in \lan{L}_{\Box\!\Diamond}$.
  Then
  $$
    \amo{A} \cdash[] \phi \cp \psi
    \quad\text{iff}\quad
    \FiltComp(\amo{A}) \cdash[] \phi \cp \psi.
  $$
\end{theorem}

  The proof of the theorem relies on Lemma~\ref{lem:mod-pres}.
  We use following definition:
  if $\theta_1$ and $\theta_2$ are valuations for $\amo{A}$,
  then we define the valuation $\theta_1 \wedge \theta_2$ by
  $(\theta_1 \wedge \theta_2)(p) := \theta_1(p) \wedge \theta_2(p)$.

\begin{lemma}\label{lem:mod-pres}
  Let $\amo{A} = (A, \Box, \Diamond)$ be a modal lattice,
  $\FiltComp(\amo{A})$ its filter completion,
  and $\sigma$ a valuation of the proposition letters for $\FiltComp(\amo{A})$.
  \begin{enumerate}
    \item \label{it:mod-pres1}
          If $a \in A$ and $\phi \in \lan{L_{\Box\!\Diamond}}$ are such
          that $a \in \sigma(\phi)$, then there exists a valuation
          $\theta$ for $\amo{A}$ such that $\theta(p) \in \sigma(p)$ for
          all $p \in \Prop$ and $\theta(\phi) \leq a$.
    \item \label{it:mod-pres2}
          If $\theta$ is a valuation for $\amo{A}$ such that
          $\theta(p) \in \sigma(p)$ for all $p$ 
          then $\theta(\phi) \in \sigma(\phi)$ for all $\phi \in \lan{L_{\Box\!\Diamond}}$.
  \end{enumerate}
\end{lemma}
\begin{proof}
  We start with the first statement. We proceed by induction on the
  structure of $\phi$.
  If $\phi = p \in \Prop$ then we set $\theta(p) = a$ and $\theta(q) = \top$
  for all other $q \in \Prop$.
  If $\phi = \top$ then $\theta(\top) = \top \in \{ \top \} = \sigma(\top)$,
  and if $\phi = \bot$ then $\theta(\bot) = \bot \in A = \sigma(\bot)$.
  
  Suppose $\phi = \phi_1 \wedge \phi_2$.
  If $a \in \sigma(\phi_1 \wedge \phi_2) = \sigma(\phi_1) \genFil \sigma(\phi_2)$
  then there are $a_1 \in \sigma(\phi_1)$ and $a_2 \in \sigma(\phi_2)$
  such that $a_1 \wedge a_2 \leq a$.
  The induction hypothesis then gives valuations $\theta_1, \theta_2$ for
  $\amo{A}$ such that $\theta_1(\phi_1) \leq a_1$ and $\theta_2(\phi_2) \leq a_2$,
  and such that $\theta_1(p) \in \sigma_1(p)$ and $\theta_2(p) \in \sigma_2(p)$
  for all $p \in \Prop$.
  Let $\theta = \theta_1 \wedge \theta_2$.
  Since $\theta_i(p) \in \sigma(p)$ for all $p \in \Prop$, and $\sigma(p)$
  is a filter, we have $\theta(p) \in \sigma(p)$ for all $p \in \Prop$.
  Then $\theta(\phi_i) \leq \theta_i(\phi_i)$ for $i \in \{ 1, 2 \}$,
  and we find
  $\theta(\phi_1 \wedge \phi_2)
    = \theta(\phi_1) \wedge \theta(\phi_2)
    \leq \theta_1(\phi_1) \wedge \theta_2(\phi_2)
    \leq a_1 \wedge a_2 \leq a.
  $
  
  If $\phi = \phi_1 \vee \phi_2$, then
  $a \in \sigma(\phi_1 \vee \phi_2) = \sigma(\phi_1) \cap \sigma(\phi_2)$
  so there are valuations $\theta_1, \theta_2$ such that
  $\theta_1(\phi_1) \leq a$ and $\theta_2(\phi_2) \leq a$,
  and such that $\theta_1(p) \in \sigma_1(p)$ and $\theta_2(p) \in \sigma_2(p)$
  for all $p \in \Prop$.
  Let again $\theta = \theta_1 \wedge \theta_2$. Then just as above
  $\theta(p) \in \sigma(p)$ for all $p \in \Prop$. Moreover, $\theta(\phi_1 \vee \phi_2)
    = \theta(\phi_1) \vee \theta(\phi_2)
    \leq \theta_1(\phi_1) \vee \theta_2(\phi_2)
    \leq a \vee a = a.
  $
  
  Finally, let $\Star \in \{ \Box, \Diamond \}$
  and $\phi = \Star \phi_1$.
  Suppose $a \in \sigma(\Star \phi_1)$. Then
  $$
    a \in \sigma(\Star \phi_1)
      = \Star^{\FiltComp} \sigma(\phi_1)
      = \bigwedge \{ \uparr(\Star b) \mid b \in A, \sigma(\phi_1) \leq \uparr b \}
      = \bigwedge \{ \uparr(\Star b) \mid b \in A, b \in \sigma(\phi_1) \}.
  $$
  The last equality follows from the fact that filters are ordered by reverse
  inclusion, so that $\sigma(\phi_1) \leq \uparr b$ iff
  $\sigma(\phi_1) \supseteq \uparr b$ iff $b \in \sigma(\phi_1)$.
  Since $\bigwedge$ in $\FiltComp(\amo{A})$ is defined as $\bigGenFil$,
  there exist $b_1, \ldots, b_n \in A$ such that $b_i \in \sigma(\phi_1)$
  for all $i \in \{ 1, \ldots, n \}$ and
  $\Star b_1 \wedge \cdots \wedge \Star b_n \leq a$.
  The induction hypothesis implies that there exist valuations $\theta_1, \ldots, \theta_n$
  for $\amo{A}$ such that $\theta_i(\phi_1) \leq b_i$
  and $\theta_i(p) \in \sigma(p)$ for all $p \in \Prop$ and $i \in \{ 1, \ldots, n \}$.
  Since $\Star$ is monotone we find
  $\theta_i(\Star\phi_1) = \Star \theta_i(\phi_1) \leq \Star b_i$ for all $i$.
  Letting $\theta = \theta_1 \wedge \cdots \wedge \theta_n$ gives
  $\theta(\Star\phi_1) \leq \theta_i(\Star\phi_1) \leq \Star b_i$ for all $i$,
  so that $\theta(\Star\phi_1) \leq \Star b_1 \wedge \cdots \wedge \Star b_n \leq a$,
  as desired.
  Since $\sigma(p)$ is a filter and $\theta_i(p) \in \sigma(p)$ for all
  $i \in \{ 1, \ldots, n \}$ we find $\theta(p) \in \sigma(p)$ for all $p \in \Prop$.
  
  We prove the second item by induction on the structure of $\phi$ as well.
  The base cases $\phi = \top, \bot, p \in \Prop$ are routine.
  Suppose $\phi = \phi_1 \wedge \phi_2$. By induction we have
  $\theta(\phi_1) \in \sigma(\phi_1)$ and $\theta(\phi_2) \in \sigma(\phi_2)$.
  So
  $\theta(\phi_1 \wedge \phi_2)
    = \theta(\phi_1) \wedge \theta(\phi_2)
    \in \sigma(\phi_1) \genFil \sigma(\phi_2)
    = \sigma(\phi_1 \wedge \phi_2).
  $
  If $\phi = \phi_1 \vee \phi_2$, then with the same induction hypothesis
  we find $\theta(\phi_i) \leq \theta(\phi_1 \vee \phi_2)$ so
  $\theta(\phi_1 \vee \phi_2) \in \sigma(\phi_i)$ for $i \in \{ 1, 2 \}$.
  Therefore
  $\theta(\phi_1 \vee \phi_2) \in \sigma(\phi_1) \cap \sigma(\phi_2) = \sigma(\phi_1 \vee \phi_2)$.
  Lastly suppose $\Star \in \{ \Box, \Diamond \}$ and $\phi = \Star \phi_1$.
  By the induction hypothesis we have $\theta(\phi_1) \in \sigma(\phi_1)$,
  so $\sigma(\phi_1) \supseteq \uparr \theta(\phi_1)$ and since
  $\FiltComp(\amo{A})$ is ordered by reverse inclusion, it follows from
  the definition of $\Star^{\FiltComp}$ that
  $\Star^{\FiltComp}\sigma(\phi_1) \supseteq \uparr \Star \theta(\phi_1)$,
  hence
  $$
    \theta(\Star\phi_1)
      = \Star\theta(\phi_1)
      \in \Star^{\FiltComp}\sigma(\phi_1)
      = \sigma(\Star\phi_1)
  $$
  as desired.
\end{proof}

  We can now prove the theorem.

\begin{proof}[Proof of Theorem~\ref{thm:mod-pres-filt}]
  We have $\amo{A} \cdash[] \phi \cp \psi$ iff $\amo{A} \cdash \phi \wedge \psi = \phi$,
  so it suffices to focus on equalities. The proposition letters play
  the role of variables, and every valuation of the proposition letters
  to elements in $\amo{A}$
  extends to valuations for $\phi$ and $\psi$ in the obvious way.
  
  So suppose $\amo{A} \cdash[] \phi = \psi$.
  Let $\sigma$ be any valuation for $\FiltComp(\amo{A})$ and $a \in A$.
  We aim to prove that $a \in \sigma(\phi)$ if and only if $a \in \sigma(\psi)$.
  So suppose $a \in \sigma(\phi)$.
  Then by Lemma~\ref{lem:mod-pres}\eqref{it:mod-pres1} we can find a
  valuation $\theta$ for $\amo{A}$ such that $\theta(\phi) \leq a$.
  By assumption $\amo{A} \cdash[] \phi = \psi$, so
  $\theta(\psi) = \theta(\phi) \leq a$,
  hence by Lemma~\ref{lem:mod-pres}\eqref{it:mod-pres2} we find
  $a \in \sigma(\psi)$.
  Similarly we can prove that $a \in \sigma(\psi)$ implies $a \in \sigma(\phi)$
  so that $\sigma(\phi) = \sigma(\psi)$.
  Since $\sigma$ is arbitrary, we conclude $\FiltComp(\amo{A}) \cdash[] \phi = \psi$.
\end{proof}

  We note that the proof of Lemma~\ref{lem:mod-pres} only relies on monotonicity
  of the modalities. Thus it yields an analogue of Theorem~\ref{thm:mod-pres-filt}
  for lattices with monotone operators.

  The arguments from Lemma~\ref{lem:mod-pres} and
  Theorem~\ref{thm:mod-pres-filt} can be dualised to similar results for
  ideal completions.
  The ideal completion of a modal lattice need not give rise
  to a modal lattice, but it does give rise to a lattice with two monotone
  operators, and the modal cases from Lemma~\ref{lem:mod-pres} and
  Theorem~\ref{thm:mod-pres-filt} only rely on monotonicity of the modalities.
  Thus we obtain a similar results for the ideal completion and hence for the
  $\Pi_1$-completion of a modal lattice.

\begin{theorem}\label{thm:preservation-modal-lat}
  Let $\amo{A} = (A, \Box, \Diamond)$ be a modal lattice and
  $\phi, \psi \in \lan{L}_{\Box\!\Diamond}$.
  Then
  $$
    \amo{A} \cdash[] \phi \cp \psi
    \quad\text{iff}\quad
    \FiltComp(\amo{A}) \cdash[] \phi \cp \psi
    \quad\text{iff}\quad
    \PiComp(\amo{A}) \cdash[] \phi \cp \psi.
  $$
\end{theorem}

  We can use this algebraic result to obtain completeness
  for weak positive modal logics.
  Theorem~\ref{thm:preservation-modal-lat} yields the following analogue
  of Lemma~\ref{lem:canonicity}

\begin{lemma}\label{lem:canonicity-modal}
  Any consequence pair $\psi \cp \chi$ of $\lan{L}_{\Box\!\Diamond}$-formulae is
  $\Pi_1$-persistent.
\end{lemma}

\begin{theorem}\label{thm:compl-modal}
  Let $\Gamma$ be a set of consequence pairs.
  Then the logic $\log{L}_{\Box\!\Diamond}(\Gamma)$ is sound and complete with
  respect to the following classes of frames:
  \begin{itemize}
    \item Modal L-spaces validating $\Gamma$;
    \item Principal modal L-frames validating $\Gamma$;
    \item Modal L-frames validating $\Gamma$.
  \end{itemize}
\end{theorem}
\begin{proof}
  Similar to the proof of Theorem~\ref{thm:compl}.
\end{proof}

\subsection{Sahlqvist Correspondence}\label{subsec:modal-corr}

  We extend the results from Section~\ref{subsec:corr} to obtain
  Sahlqvist correspondence for modal L-frames. Our definition
  of a Sahlqvist consequence pair is closely aligned to Sahlqivst formulae
  from normal modal logic (see e.g.~\cite[Definition~3.51]{BRV01}).
  To account for the additional relation in the definition of a modal L-model,
  we work with a first-order logic with an extra binary relation symbol
  (compared to Section~\ref{subsec:corr}).
  That is, we let $\FOL_2$ be the first-order logic with a unary predicate
  for each proposition letter and two binary predicates
  $S$ (corresponding to the partial order) and $R$ (corresponding to the modal relation).
  We write $\SOL_2$ for the second-order logic with the same predicates
  where we allow quantification over unary predicates.
  Every modal L-model $\mo{M}$ gives rise to a first-order structure
  $\mo{M}^{\circ}$ for $\FOL_2$ in the obvious way,
  and similarly every modal L-frame $\mo{X}$ yields a structure
  $\mo{X}^{\circ}$ where we can interpret $\SOL_2$-formulae with no free predicates.
  We extend the standard translation from Definition~\ref{def:st} to a
  translation $\st_x : \lan{L}_{\Box\!\Diamond} \to \FOL_2$ by adding the
  clauses
  \begin{equation*}
    \st_x(\Box\phi) = \forall y(xRy \to \st_y(\phi)), \qquad
    \st_x(\Diamond\phi) = \exists y(xRy \wedge \st_y(\phi)).
  \end{equation*}
  We then have the following counterpart of Proposition~\ref{prop:model-corr1}.
  
\begin{proposition}\label{prop:st1-modal}
  Let $\mo{M}$ be a modal L-model, $w$ a state in $\mo{M}$ and
  $\phi$ an $\lan{L}_{\Box\!\Diamond}$-formula. Then
  \begin{enumerate}
    \item $\mo{M}, w \Vdash \phi$ iff $\mo{M}^{\circ} \models \st_x(\phi)[w]$;
    \item $\mo{M}, w \Vdash \phi \cp \psi$
          iff $\mo{M}^{\circ} \models \st_x(\phi \cp \psi)[w]$.
  \end{enumerate}
\end{proposition}

  Defining the second-order translation of a modal consequence pair
  $\phi \cp \psi$ as in Definition~\ref{def:fil-sot},
  we can extend Lemma~\ref{lem:sot} to the next lemma:
  
\begin{lemma}
  For all modal L-frames $\mo{X} = (X, \leq, R)$ and modal 
  consequence pairs $\psi \cp \chi$,
  $$
    \mo{X} \Vdash \psi \cp \chi
      \iff \mo{X}^{\circ} \models \so(\psi \cp \chi).
  $$
\end{lemma}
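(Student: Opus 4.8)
The plan is to adapt essentially verbatim the proof of Lemma~\ref{lem:sot}, the only new ingredients being that $\st_x$ is now the modal standard translation and that the bridge between frame satisfaction and first-order satisfaction is supplied by the modal versions of Proposition~\ref{prop:st1} and Corollary~\ref{cor:model-corr}. The conceptual heart of the argument is unchanged: the guard $\isfil(P_1) \wedge \cdots \wedge \isfil(P_n)$ in $\so(\psi \cp \chi)$ expresses precisely that each $P_i$ is interpreted as a filter of $(X,\leq)$, so that the interpretations of the unary predicates which can make the antecedent of the inner implication in \eqref{eq:sot} true are exactly the valuations $V : \Prop \to \fil(X,\leq)$ of $\mo{X}$. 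Everything else is quantifier bookkeeping.

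For the left-to-right direction I would assume $\mo{X} \Vdash \psi \cp \chi$ and fix an arbitrary interpretation of $P_1, \ldots, P_n$ together with a value $x'$ for $x$. If some $P_i$ is interpreted as a non-filter, then $\mo{X}^{\circ} \not\models \isfil(P_i)$, so the antecedent is false and the inner implication holds vacuously. Otherwise the $P_i$ jointly determine a valuation $V$ of $\mo{X}$; the hypothesis gives $V(\psi) \subseteq V(\chi)$, i.e.\ $\mo{X}, x' \Vdash \psi \cp \chi$, which by Corollary~\ref{cor:model-corr} is exactly $\mo{X}^{\circ} \models (\st_x(\psi) \to \st_x(\chi))[x']$ under this interpretation. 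As the interpretation and $x'$ were arbitrary, $\mo{X}^{\circ} \models \so(\psi \cp \chi)$.

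The right-to-left direction runs the same steps backwards: given $\mo{X}^{\circ} \models \so(\psi \cp \chi)$, any valuation $V$ of $\mo{X}$ yields an interpretation of the $P_i$ satisfying $\ISFIL$, so for each state $x'$ the instantiated sentence forces $\mo{X}^{\circ} \models \st_x(\psi)[x']$ to imply $\mo{X}^{\circ} \models \st_x(\chi)[x']$; translating back through Proposition~\ref{prop:st1} gives that $\mo{X}, x' \Vdash \psi$ implies $\mo{X}, x' \Vdash \chi$, and letting $V$ and $x'$ range over everything yields $\mo{X} \Vdash \psi \cp \chi$. I expect no genuine obstacle here, since the modalities are already absorbed into $\st_x$ and hence handled by Proposition~\ref{prop:st1}; the only point requiring care is the correspondence between $\isfil$-satisfying interpretations and genuine filter-valuations, which is exactly the design purpose of the $\isfil$ guard (noting that in $\FOL_2$ the relation inside $\abovemeet$, and hence inside $\isfil$, is the order symbol $S$, so that $\isfil(P_i)$ still asserts ``$P_i$ is a filter of $(X,\leq)$'').
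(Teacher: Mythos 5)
Your proof is correct and follows exactly the route the paper takes: the paper's own proof of this lemma is literally the single line ``Similar to the proof of Lemma~\ref{lem:sot}'', and your argument is a faithful expansion of that proof (split on whether the $P_i$ are interpreted as filters, use the $\isfil$ guard to discharge the non-filter case vacuously, and bridge frame satisfaction and first-order satisfaction via the modal standard translation and the modal analogues of Proposition~\ref{prop:st1} and Corollary~\ref{cor:model-corr}). Your closing observation that $\abovemeet$, and hence $\isfil$, must be read with the order symbol $S$ rather than the modal relation symbol $R$ in $\FOL_2$ is a detail the paper leaves implicit, and you handle it correctly.
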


  Finally, we still have monotonicity of all connectives of
  $\lan{L}_{\Box\!\Diamond}$, so the following analogue of
  Lemma~\ref{lem:increasing-val} goes through without problems.

\begin{lemma}\label{lem:increasing-val-modal}
  Let $\mo{X}$ be a modal L-frame and let $V$ and $V'$ be valuations for $\mo{X}$
  such that $V(p) \subseteq V'(p)$ for all $p \in \Prop$.
  Then for all $\phi \in \lan{L}_{\Box\!\Diamond}$ we have $V(\phi) \subseteq V'(\phi)$.
\end{lemma}


  We are now ready to define Sahlqvist consequence pairs and prove
  a correspondence result. We make use of the following notion of
  a boxed atom.

\begin{definition}
  A \emph{boxed atom} is a formula of the form
  $$
    \Box^n p := \underbrace{\Box \cdots \Box}_{\text{$n$ times}} p,
  $$
  where $p$ is a proposition letter.
  A \emph{Sahlqvist antecedent} is a formula made from boxed atoms,
  $\top$ and $\bot$ by freely using $\wedge$, $\vee$ and $\Diamond$.
  A modal consequence pair $\phi \cp \psi$ is called \emph{Sahlqvist} if
  $\phi$ is a Sahlqvist antecedent (and $\psi$ is any formula).
\end{definition}

  If $R$ is a relation, then we write $R^n$ for the $n$-fold composition
  of $R$. That is, $xR^ny$ if there exist $x_0, \ldots, x_{n}$ such that
  $x = x_0$, $y = x_{n}$ and $x_iRx_{i+1}$ for all $i \in \{ 0, \ldots, n-1 \}$.
  With this definition, truth of $\Box^n p$ in a modal L-model
  $\mo{M} = (X, \leq, R, V)$ can be given as
  $$
    \mo{M}, x \Vdash \Box^n p
      \iff \forall y \in X, xR^ny \text{ implies } \mo{M}, y \Vdash p.
  $$
  We legislate $xR^0y$ iff $x = y$. Then the interpretation of $\Box^0p$
  simply coincides with $p$.

\begin{theorem}\label{thm:modal-corr}
  Any Sahlqvist modal consequence pair $\psi \cp \chi$ locally corresponds to
  a first-order formula with one free variable.
\end{theorem}
\begin{proof}
  By Lemma ... we have $\mo{X}, w \Vdash \psi \cp \chi$ if and only if
  $\mo{X}^{\circ} \models \so(\psi \cp \chi)[w]$.
  As in Theorem~\ref{thm:corr}, our strategy for obtaining a first-order
  formula is to remove all second-order quantifiers from $\so(\psi \cp \chi)[w]$.
  We assume that no two quantifiers bind the same variable.
  The case where $\psi = \top$ or $\bot$ can be handled as in
  Theorem~\ref{thm:corr}. 
  Let $p_1, \ldots, p_n$ be the propositional variables occurring in
  $\psi$, and write $P_1, \ldots, P_n$ for their corresponding unary predicates.
  We assume that every proposition letter that occurs in $\chi$
  also occurs is $\psi$, for otherwise we may replace it by $\bot$ to
  obtain a formula which is equivalent in terms of validity on frames.
  
  \bigskip\noindent
  {\it Step 1.}
  Use equivalences of the form
  $$
    (\exists w(\alpha(w)) \wedge \beta) \leftrightarrow \exists w(\alpha(w) \wedge \beta),
    \qquad
    (\exists w(\alpha(w)) \vee \beta) \leftrightarrow \exists w(\alpha(w) \vee \beta),
  $$
  and
  $$
    (\exists w (\alpha(w)) \to \beta) \leftrightarrow\forall w(\alpha(w) \to \beta)
  $$
  to pull out all existential quantifiers that arise in $\st_x(\psi)$.
  Let $Y := \{ y_1, \ldots, y_m \}$ denote the set of (bound) variables that arise
  in the antecedent of the implication from the second-order translation.
  We end up with a formula of the form
  $$
    \forall P_1 \cdots \forall P_n \forall y_1 \cdots \forall y_m
      \big( (\ISFIL \wedge \AT \wedge \REL) \to \st_x(\chi)\big)
  $$
  where
  \begin{itemize}
    \item $\ISFIL$ is a conjunction of formulae of the form $\isfil(P_i)$;
    \item $\BOXAT$ is a conjunction of standard translations of boxed proposition letters,
          i.e.~formulae of the form $\forall z(z'R^nz \to P_iz)$
          (here $P_iz$ is viewed as $\forall z(z'R^0z \to P_iz)$);
    \item $\REL$ is a conjunction of formulae of the form $zRz'$ and $\abovemeet(z;, z', z'')$.
  \end{itemize}

%
  
  \noindent
  {\it Step 2.} Next we read off minimal instances of the $P_i$ making the
  antecedent true.
%
  For each proposition letter $P_i$, let $\forall y_{i_1}(z_{i_1}R^{n_1}y_{i_1} \to P_iy_{i_1}), \ldots, \forall y_{i_k}(z_{i_k}R^{n_k}y_{i_k} \to P_iy_{i_k})$ be
  the occurrences of $P_i$ in $\BOXAT$ in the antecedent of \eqref{eq:post-process}.
  Intuitively, we define the valuation of $p_i$ to be the filter generated
  by the (interpretations of) $y_{i_1}, \ldots, y_{i_k}$.
  Formally,
  \begin{align*}
    \sigma(P_i) := 
      \bigvee \Big\{ &\exists w_{i_1} \cdots \exists w_{i_k}
        \big(z_{i_1}R^{n_1}w_{i_1} \wedge \cdots \wedge z_{i_k}R^{n_k}w_{i_k}  \\
        &\wedge \abovemeet(u; w_{i_1}, \ldots, w_{i_k})\big) \mid
         \{ i_1, \ldots, i_k \} \Big\}.
  \end{align*}
  (If $k = 0$, i.e.~there are no boxed atoms involving $P_i$ in the formula,
  then we take the empty meet to be falsum, i.e.~$\sigma(P_i) = \lambda u.(u \neq u)$.)
  
  The remainder of the proof is analogous to the proof of
  Theorem~\ref{thm:corr}.
\end{proof}

  In the next example we apply the algorithm of the proof of
  Theorem~\ref{thm:modal-corr} to two simple consequence pairs,
  $p \cp \Diamond p$ and $\Box p \cp p$.
  The shows the mechanism of the proof in action.
  Moreover, it demonstrates that the duality between $\Box$ and $\Diamond$
  is weaker than in the classical case, because the formulae
  locally correspond to different frame conditions.
  
  In the correspondents, we write $R$ for the modal relation and $\fleq$
  for the poset order. Technically this should be $S$, which is then
  interpreted as $\fleq$.
  Besides, note that $\abovemeet(u; z)$ is the same as $zSu$, which
  we denote by $z \fleq u$.
  Similarly, $\abovemeet(u; z, z')$ means $z \fmeet z' \fleq u$.

\begin{example}
  The second-order translation of $p \cp \Diamond p$ is
  $$
    \forall P(\isfil(P) \wedge Px \to \exists y(xRy \wedge Py))
  $$
  This is already of the desired shape, so we proceed to Step 2.
  We find $\sigma(P) = \lambda u.x \fleq u$. Substituting this gives
  the first-order formula
  $\forall x(\isfil(P) \wedge (x \fleq x) \to \exists y(xRy \wedge (x \fleq y)))$.
  The antecedent of the formula is always true, so the
  (simplified) local correspondent of $p \cp \Diamond p$ is
  $$
    \exists y (xRy \wedge x \fleq y).
  $$
  Thus, a frame satisfies $p \cp \Diamond p$ if
  $\forall x \exists y (xRy \wedge x \fleq y)$.
\end{example}

\begin{example}
  Next consider $\Box p \cp p$.
  The second-order translation is
  $$
    \forall P \forall x(\isfil(P) \wedge \forall y(xRy \to Py) \to Px).
  $$
  Then $\sigma(P) = \lambda u. \exists y(xRy \wedge y \fleq u)$.
  Instantiating this makes the antecedent of the outer implication
  vacuously true, so that we get the local correspondent
  $
    \exists y (xRy \wedge y \fleq x).
  $
  Validity of $\Box p \cp p$ on a frame then corresponds to
  $\forall x \exists y (xRy \wedge y \fleq x)$.
\end{example}

  Next, we use Theorem~\ref{thm:modal-corr} to enforce normality for
  the diamond operator.
  It follows from Lemma~\ref{lem:some-val} that
  $\Diamond p \vee \Diamond q \cp \Diamond(p \vee q)$ is valid in
  every modal L-frame, so we focus on its converse.
%

\begin{example}\label{exm:normal-diamond}
  If we want $\Diamond$ to preserve binary joins we need to add  
  \begin{equation}\label{eq:cp-normal-diamond}
    \Diamond(p \vee q) \cp \Diamond p \vee \Diamond q
  \end{equation}
  to our logic. This is Sahlqvist, so we can use the algorithm
  from Theorem~\ref{thm:modal-corr} to the first-order frame condition
  ensuring its validity.
  The second-order translation is
  $$
    \forall P \big(
        \isfil(P) \wedge \isfil(Q) \wedge
        \exists y(xRy \wedge
        (\exists z \exists z'(\abovemeet(y; z, z') \wedge Pz \wedge Qz')))
        \to \st_x(\Diamond p \vee \Diamond q) \big).
  $$
  Processing the formula, we obtain the following second-order translation:
  \begin{equation}\label{eq:exm-normal-diamond}
    \forall P \forall Q \forall x \forall y \forall z \forall z'
      (\ISFIL \wedge xRy \wedge \abovemeet(y;, z, z') \wedge Pz \wedge Qz' \to \st_x(\chi))
  \end{equation}
  This gives
  $\sigma(P) = \lambda u. z \fleq u$, $\sigma(Q) = \lambda u. z' \fleq u$.
  The standard translation of the antecedent is
  $$
    \st_x(\chi)
      = \exists v \exists v'(\abovemeet(x; v, v')
            \wedge \exists w(vRw \wedge Pw)
            \wedge \exists w'(v'Rw' \wedge Qw')).
  $$
  Substitution $P$ and $Q$ and omitting trivial terms
  we obtain the following local correspondent:
  $$
    \forall y \forall z \forall z'\big((xRy \wedge (z\fmeet z' \fleq y)) \to \exists v \exists v'((v \fmeet v' \fleq x)
            \wedge \exists w(vRw \wedge z \fleq w)
            \wedge \exists w'(v'Rw' \wedge z' \fleq w'))\big).
  $$
  In a picture:
    $$
    \begin{tikzcd}
        & [-1.3em]
          x \arrow[rrrd, "R"]
            \arrow[dddd, dashed, -]
        & [-1.4em]
        & [-1em]
        & [-1.5em]
        & [-1.5em] \\ [-2.5em]
        &
        &
        &
        & y
        & \\ [-2.2em]
      w     \arrow[ddr, dashed, -]
        &
        & w' \arrow[ddl, dashed, -]
            \arrow[rrrd, bend left=-2, dashed, "R"]
        &&& \\ [-1.9em]
        &
        &
        & v \arrow[dd, dashed, -]
               \arrow[lllu, bend left=3, dashed, <-, crossing over, "R" pos=.25]
        &
        & v' \arrow[dd, dashed, -] \\ [-2.6em]
        & w \fmeet w'
        &&&& \\ [-1.8em]
        &
        &
        & z \arrow[dr, -]
        &
        & z' \arrow[dl, -] \\ [-1.7em]
        &
        &
        &
        & z \fmeet z'  \arrow[uuuuu, crossing over, -]
        &
    \end{tikzcd}
  $$
  A modal L-frame satisfies normality of $\Diamond$ if this
  holds for all states $x$.
  It is a relational analogue of the second condition
  from Definition~\ref{def:L-mor} that ensures preservation of joins.
\end{example}

\section{Conclusion}

  We have given a new duality for bounded (not necessarily distributive) lattices
  which resembles Stone-type dualities.
  It builds on a known duality for the category of bounded
  meet-semilattices given by Hofmann, Mislove and Stralka~\cite{HofMisStr74}.
  The relation between our duality and the duality by Hofmann, Mislove and
  Stralka is similar to the relation between Esakia duality and Priestley duality.
  It can also be seen as a Stone-type analogue of
  Jipsen and Moshier's spectral duality for lattices~\cite{MosJip09}.
  
  We also extended the duality to one for a modal extension
  of weak positive logic with $\Box$ and $\Diamond$.
  While these are interpreted using a relation in the way as in normal
  modal logic over a classical base, the non-standard interpretation of joins
  causes $\Diamond$ to no longer be join-preserving.
  This interesting phenomenon has also been observed in the context of modal
  intuitionistic logic~\cite{Koj12}.
  
  As the  dualities presented in this
  paper resemble known dualities used in modal logic, 
  they allow us to use similar tools and techniques.
  To showcase this, we proved $\Pi_1$-persistence and Sahlqvist correspondence  results
  along the lines of~\cite{BRV01}.

  There are many intriguing avenues for further research, some of which we
  list below.
  \begin{description}
    \item[Finite model property.]
          While it is easy to derive the finite model property for weak positive
          logic, the same result for the modal extension
          presented in this paper appears to be non-trivial.
    \item[Relation to ortho(modular) lattices.]
          Ortholattices and orthomodular lattices provide other interesting classes of
          lattices with operators. However, in  
          ortholattices the orthocomplement is turning joins into meets. Duality for these structures has been discussed by Goldblatt \cite{Gol74,Gol75} and 
          Bimbo \cite{Bimbo07}.
          In \cite[Chapter~6]{Dmi21} the duality for lattices is extended
          to account for a modal operators that turn joins into meets. 
          Recently modal ortholattices have been studied in~\cite{HM22}.
          We leave it an an interesting open problem to see whether the preservation and correspondence  results 
          of this paper can be extended to this setting.
          It is also open whether these technique could be extended to orthomodular lattices \cite{kal83}. 
          This is especially interesting as orthomodalur lattices provide
          algebraic structures of quantum logic \cite{de2014quantum}, so
          these methods could be relevant in the study of quantum logic.
    \item[Different modalities.]
          Yet another question is what other modal extensions of weak positive
          logic we can define. In particular, it would be
          interesting to define a form of neighbourhood semantics based on the
          L-frames used in this paper and investigate the behaviour of the
          resulting modalities.
  \end{description}

\noindent
{\it Acknowledgements} 
We are very grateful to the referee for many significant comments, deep insights, and pointers to the literature that made us rethink 
and improve on a number of important components of this work.

\bibliographystyle{plain} 
\bibliography{lattice-biblio.bib}

\end{document}